\documentclass[reqno]{amsart}
\usepackage{amsmath, amssymb, amsthm, epsfig, enumerate}
\usepackage{hyperref, latexsym}
\usepackage{url}
\usepackage[mathscr]{euscript}

\usepackage{color}
\usepackage{fullpage} 
\usepackage{setspace}

\onehalfspacing

\def\today{\ifcase\month\or
  January\or February\or March\or April\or May\or June\or
  July\or August\or September\or October\or November\or December\fi
  \space\number\day, \number\year}

\DeclareMathOperator{\supp}{\mathrm{supp}}

 \newtheorem{theorem}{Theorem}
  
 \newtheorem{lemma}[theorem]{Lemma}
 \newtheorem{proposition}[theorem]{Proposition}
 \newtheorem{corollary}[theorem]{Corollary}
 \theoremstyle{definition}
 \newtheorem{definition}[theorem]{Definition}
 \newtheorem{example}{Example}
 \theoremstyle{remark}
 \newtheorem{remark}[theorem]{Remark}

 \newcommand{\ft}{\widehat}
 \newcommand{\mc}{\mathcal}

 \newcommand{\D}{\mc{D}}
 
 \newcommand{\F}{\mc{F}}
 \newcommand{\GG}{\mathfrak{G}}
 \newcommand{\G}{\mc{G}}

 \newcommand{\Sw}{\mc{S}}

 \newcommand{\LL}{\mc{L}}
 \newcommand{\M}{\mc{M}}
\newcommand{\T}{\mathbb{T}}

 \newcommand{\C}{\mathbb{C}}
 \newcommand{\R}{\mathbb{R}}

 \newcommand{\Z}{\mathbb{Z}}

 \newcommand{\dt}{\text{\rm d}t}

 \newcommand{\dx}{\text{\rm d}x}

    \renewcommand{\d}{\text{\rm d}}

\newcommand{\lb}{\left\lbrace}
\newcommand{\rb}{\right\rbrace}
\newcommand{\ra}{\rightarrow}
\newcommand{\dint}{\displaystyle\int}
\newcommand{\dsum}{\displaystyle\sum}
\newcommand{\dprod}{\displaystyle\prod}

\newcommand{\hf}{\tfrac{1}{2}}

\begin{document}

\title[]{One-sided Band-Limited Approximations of Some Radial Functions}
\author[Gon\c{c}alves, Kelly and Madrid]{Felipe Gon\c{c}alves, Michael Kelly and Jose Madrid}
\date{\today}
\subjclass[2010]{41A30, 46E22, 41A05, 42A05.}
\keywords{Beurling-Selberg problem, band-limited, low-pass filter, Gaussian, Gaussian subordination, Distribution method, one-sided approximation, entire function of exponential type}
\address{IMPA-I{\tiny{NSTITUTO}} {\tiny{DE}} M{\tiny{ATEMATICA}} P{\tiny{URA}} {\tiny{E}} A{\tiny{PLICADA}}, E{\tiny{STRADA}} D{\tiny{ONA}} C{\tiny{ASTORINA}}, 110, R{\tiny{IO}} {\tiny{DE}} J{\tiny{ANEIRO}}, 22460-320, B{\tiny{RAZIL}} }
\email{ffgoncalves@impa.br}

\address{D{\tiny{EPARTMENT}} {\tiny{OF}} M{\tiny{ATHEMATICS}}, U{\tiny{NIV.}} {\tiny{OF}} T{\tiny{EXAS}},1  U{\tiny{NIVERSITY}} S{\tiny{TA.}} C1200 A{\tiny{USTIN,}} T{\tiny{EXAS}} 78712 }
\email{mkelly@math.utexas.edu}

\address{IMPA-I{\tiny{NSTITUTO}} {\tiny{DE}} M{\tiny{ATEMATICA}} P{\tiny{URA}} {\tiny{E}} A{\tiny{PLICADA}}, E{\tiny{STRADA}} D{\tiny{ONA}} C{\tiny{ASTORINA}}, 110, R{\tiny{IO}} {\tiny{DE}} J{\tiny{ANEIRO}}, 22460-320, B{\tiny{RAZIL}} }
\email{josermp@impa.br}

\allowdisplaybreaks
\numberwithin{equation}{section}

\begin{abstract}
	We construct majorants and minorants of a Gaussian function in Euclidean space that have Fourier transforms supported in a box. The majorants that we construct are shown to be extremal and our minorants are shown to be {\it asymptotically extremal} as the sides of the box become uniformly large. We then adapt the Distribution and Gaussian Subordination methods of \cite{CLV} to the multidimensional setting to obtain majorants and minorants for a class of radial functions. Periodic analogues of the main results are proven and applications to Hilbert-type inequalities are given.
\end{abstract}

\maketitle
\setcounter{tocdepth}{1}


\section{Introduction}
In this paper we address a class of problems that have come to be known as {\it Beurling-Selberg extremal problems}. The most well-known example of such a problem is due to Selberg himself \cite{M1978, S2,V}. Given an interval $I\subset \R$ and $\delta>0$, Selberg constructed an integrable function $M(x)$ that satisfies 
\begin{enumerate}[(i)]
\item $\hat{M}(\xi)=0$ if $|\xi|>\delta$ where $\hat{M}(\xi)$ is the Fourier transform (see \S\ref{prelim}) of $M(x)$, 
\item $M(x)\geq \chi_{I}(x)$ for each $x\in\R$, and 
\item $M(x)$ has the smallest integral\footnote{This was shown by Selberg in the case when $\delta\mathrm{Length}(I)\in\Z$.If  $\delta\mathrm{Length}(I)\not\in\Z$, then the minimal integral was found by BF Logan (\cite{Logan}, unpublished) and Littmann \cite{Littmann2013}.} among all functions satisfying (i) and (ii). 
\end{enumerate}
The key constraint (which is common in Beurling-Selberg problems) is condition (ii), that $M(x)$ {\it majorizes} the characteristic function of $I$. Such problems are sometimes called {\it one-sided} approximation problems. 

Most work on these sorts of problems have been focused on solving Selberg's problem but with $\chi_{I}(x)$ replaced by a different single variable function, such as $e^{-\lambda|x|}$. Such problems are considered in \cite{CL,CL2,CG, CLV,CV2,CV3,L1, Littmann2006, Littmann2013, V}. Some work has also been done in the several variables setting. Shortly after his construction of his function $M(x)$, Selberg was able to construct majorants and minorants of the characteristic function of a box whose Fourier transforms are supported in a (possibly different) box. His majorant can be shown to be extremal for certain configurations of boxes, and it is unknown if the minorant is ever extremal for any given configuration of boxes. Nevertheless, such approximations have proven to be useful in applications \cite{Baker, BMV, Cochrane, DrmotaTichy1997, Harman1993, Harman1998, HKW,KL} because the best known approximations are {\it asymptotically extremal} as the Fourier support becomes uniformly large. We will use Selberg's method of constructing minorants in what follows (see Proposition \ref{prop-selberg}).

We first study the Beurling-Selberg problem for multivariate Gaussian functions.\footnote{This problem, for a single variable, was solved in \cite{CLV}.} We then generalize the {\it Gaussian subordination} and {\it  distribution method}, originally developed in \cite{CLV}, to higher dimensions and apply the method to study the Beurling-Selberg problem for a class of radial functions.  We conclude our investigations with some applications to Hilbert-type inequalities and adapt the construction to periodic functions.

To state the first of our main results we will use the following notation (see \S \ref{prelim} for additional information). For $c>0$, let $g_{c}(t)=e^{-c\pi t^{2}}$ and for  $a,\lambda\in\Lambda$ ( where $\Lambda=(0,\infty)^d$) let 
\[
	x\in\R^d\mapsto G_{\lambda}(x)=\dprod_{j=1}^{d}g_{\lambda_{j}}(x_{j})=\exp\lb -\dsum_{j=1}^{d}\lambda_{j}\pi x_{j}^{2}  \rb.
\]
and let 
\[
	Q(a)=\dprod_{j=1}^{d}[-a_{j},a_{j}].
\]
\indent Our first result is a solution to the Beurling-Selberg extremal problem of determining optimal {\it majorants} of the Gaussian function $G_{\lambda}(x)$ that have Fourier transform supported in $Q(a)$.
\begin{theorem}\label{maj-theo}
Let $a,\lambda\in\Lambda$. If $F:\R^d\to\R$ is an integrable function that satisfies 
\begin{enumerate}[(i)]
\item $F(x)\geq G_\lambda(x)$ for each $x\in\R^d$, and 
\item $\hat{F}(\xi)=0$ for each $\xi\not\in Q(a)$,
\end{enumerate}
then
\begin{equation} \label{maj-ans}
\int_{\R^d} F(x)dx \geq \prod_{j=1}^d\lambda_j^{-\hf}\Theta(0;ia_j^2/\lambda_j).
\end{equation}
where $\Theta(v;\tau)$ is Jacobi's theta function (see \S\ref{prelim}). Moreover, equality holds if $F(z)=M_{\lambda,a}(z)$ where $M_{\lambda, a}(z)$ is defined by (\ref{maj-def}).
\end{theorem}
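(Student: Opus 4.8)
The plan is to exploit the tensor structure of both the Gaussian $G_\lambda$ and the box $Q(a)$ to reduce the problem, at least in spirit, to the one-dimensional case treated in \cite{CLV}, and then to close the gap using a quadrature/interpolation identity. First I would recall the one-dimensional result: for each $j$ there is an extremal majorant $m_{\lambda_j,a_j}(t)\ge g_{\lambda_j}(t)$ with Fourier transform supported in $[-a_j,a_j]$ whose integral equals $\lambda_j^{-1/2}\Theta(0;ia_j^2/\lambda_j)$, and — crucially — which interpolates $g_{\lambda_j}$ together with its derivative at the lattice of points where a certain sign pattern forces tangency (the nodes of the relevant quadrature formula). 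The candidate extremal majorant in $d$ dimensions should be $M_{\lambda,a}(x)=\prod_j m_{\lambda_j,a_j}(x_j)$ (this is presumably the content of the referenced definition (\ref{maj-def})); its Fourier transform is the product of the one-dimensional transforms, hence supported in $Q(a)$, and its integral is the product of the one-dimensional extremal integrals, which is exactly the right-hand side of (\ref{maj-ans}). So the ``moreover'' clause is essentially bookkeeping once the one-dimensional facts are in hand, modulo checking that the product of majorants of nonnegative functions is a majorant — which holds because $g_{\lambda_j}\ge 0$ and $m_{\lambda_j,a_j}\ge 0$ (the latter itself needing a short argument, or a citation, since $m_{\lambda_j,a_j}\ge g_{\lambda_j}\ge 0$).

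The heart of the matter is the lower bound: that \emph{every} admissible $F$ has integral at least the stated quantity. Here I would not try to separate variables in $F$ (there is no reason an extremal-type competitor factors), but instead use a multidimensional quadrature identity. The idea: because $\ft F$ is supported in $Q(a)$, the function $F$ is ``band-limited'' in each coordinate, and one can write a Poisson-type summation / Gaussian-quadrature formula
\[
\int_{\R^d} F(x)\,\dx \;=\; \sum_{n\in\Z^d} w_n\, F(t_n),
\]
where the nodes $t_n$ and positive weights $w_n$ are the $d$-fold products of the one-dimensional quadrature nodes and weights associated to exponential type $2\pi a_j$ in the $j$-th coordinate. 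Such a formula holds for any integrable $F$ with $\ft F$ supported in $Q(a)$ (apply the one-dimensional quadrature formula in each variable successively, justifying the interchange by Fubini and the decay/integrability of band-limited functions). Granting this, and using $F(t_n)\ge G_\lambda(t_n)=\prod_j g_{\lambda_j}((t_n)_j)$ together with positivity of the weights, we get
\[
\int_{\R^d} F(x)\,\dx \;\ge\; \sum_{n\in\Z^d} w_n\, G_\lambda(t_n)\;=\;\prod_{j=1}^d\Bigl(\sum_{k} w_k^{(j)}\, g_{\lambda_j}(t_k^{(j)})\Bigr),
\]
and each one-dimensional sum equals $\lambda_j^{-1/2}\Theta(0;ia_j^2/\lambda_j)$ either by the known one-dimensional extremal computation or by evaluating the theta quadrature directly (this is exactly where the theta function enters: $\sum_k w_k g_c(t_k)$ is a Gaussian sum that resums to a Jacobi theta value).

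The main obstacle I anticipate is rigorously justifying the $d$-dimensional quadrature identity for the full class of admissible $F$ — in particular, the convergence of the multi-index sum and the legitimacy of applying the one-variable quadrature formula slice-by-slice when $F$ is only assumed integrable with compactly supported Fourier transform (rather than, say, Schwartz). One clean route is to first prove the bound for $F$ in a dense, well-behaved subclass (e.g.\ $F$ with $\ft F$ smooth and compactly supported in the interior of $Q(a)$, or $F$ obtained by convolving an admissible function with a narrow Gaussian) where all manipulations are clearly valid, and then pass to the limit. A second, possibly cleaner route avoids quadrature entirely: pair $F-G_\lambda\ge 0$ against the extremal \emph{distribution}/measure dual to $M_{\lambda,a}$, i.e.\ the nonnegative measure $\dd\mu=\sum_n w_n\,\delta_{t_n}$ whose Fourier transform, restricted to where it matters, makes $\int F\,\dd\mu = \int F\,\dx$ for band-limited $F$; then $\int(F-G_\lambda)\,\dd\mu\ge 0$ gives the inequality, and equality for $M_{\lambda,a}$ follows from the interpolation property $M_{\lambda,a}=G_\lambda$ on $\mathrm{supp}\,\mu$. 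Either way, the one-dimensional results of \cite{CLV} do all the analytic heavy lifting; the new content is the tensorization, and the only real care needed is in the functional-analytic justification of the quadrature/duality on the stated class of competitors.
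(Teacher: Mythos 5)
Your proposal is correct and follows essentially the same route as the paper: the majorant is the tensor product $M_{\lambda,a}(z)=\prod_j m_{\lambda_j/a_j^2}(a_j z_j)$, and the lower bound is exactly the quadrature identity you describe, which in the paper is just the Poisson summation formula $\int_{\R^d}F = \alpha^{-1}\sum_{k\in\Z^d}F(k/a)$ (valid since $\ft F$ is supported in $Q(a)$), followed by $F(k/a)\ge G_\lambda(k/a)$ and the resummation (\ref{Ptheta}) of the Gaussian lattice sum into the theta values.
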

This theorem is essentially a corollary of Theorem 3 of \cite{CLV}. The proof simply uses the product structure and positivity of $G_{\lambda}(x)$ in conjunction with Theorem 3 of \cite{CLV}.  It would be interesting to determine the analogue of the above theorem for {\it minorants} of $G_{\lambda}(x)$ (i.e. the high dimensional analogue of Theorem 2 of \cite{CLV}) where the extremal functions cannot be obtained by a tensor product of lower dimensional extremal functions.  

In our second result we address this problem by constructing minorants  of the Gaussian function $G_{\lambda}(x)$ that have Fourier transform supported in $Q(a)$ and that are asymptotically extremal as $a$ becomes uniformly large in each coordinate.

\begin{theorem} \label{min-theo}
Let $a,\lambda\in\Lambda$. If $F:\R^d\to\R$ is an integrable function that satisfies 
\begin{enumerate}[(i)]
\item $F(x)\leq G_\lambda(x)$ for each $x\in\R^d$, and 
\item $\hat{F}(\xi)=0$ for each $\xi\not\in Q(a)$,
\end{enumerate}
then
\begin{equation} \label{min-bound}
\int_{\R^d} F(x)dx \leq \prod_{j=1}^d\lambda_j^{-\hf}\Theta(\hf;ia_j^2/\lambda_j).
\end{equation}
Furthermore, there exist a positive constant $\gamma_0=\gamma_0(d)$ such that if $\gamma:=\min\{a_j^2/\lambda_j : 1\leq j\leq d\}\geq \gamma_0$, then
\begin{equation} \label{min-asy}
\prod_{j=1}^d\lambda_j^{-\hf}\Theta(\hf;ia_j^2/\lambda_j) \leq (1+5de^{-\pi\gamma})\int_{\R^d}L_{\lambda,a}(x)dx
\end{equation}
where $L_{\lambda,a}(x)$ is defined by (\ref{min-def}).
\end{theorem}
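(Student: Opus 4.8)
The plan is to prove Theorem \ref{min-theo} in two parts, handling the upper bound \eqref{min-bound} by a reduction to the one-dimensional result and the asymptotic estimate \eqref{min-asy} by an explicit construction and a careful error analysis. For the upper bound, I would first recall the one-dimensional extremal minorant problem solved in \cite{CLV} (their Theorem 2), which gives that any integrable $f \le g_c$ with $\widehat f$ supported in $[-\delta,\delta]$ satisfies $\int_\R f \le c^{-1/2}\Theta(\tfrac12; i\delta^2/c)$. The difficulty is that, unlike the majorant case, one cannot simply tensor one-dimensional minorants: a product of minorants need not minorize the product. Instead I would argue by ``slicing'': integrate $F(x) \le G_\lambda(x)$ over all but one variable. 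Fixing $x_2,\dots,x_d$, the section $x_1 \mapsto F(x_1,x_2,\dots,x_d)$ has Fourier transform (in $x_1$) supported in $[-a_1,a_1]$, but it need not be pointwise below $g_{\lambda_1}(x_1)$ after partial integration. The cleaner route is the standard one used for box minorants: use the one-dimensional \emph{majorant} $M_{\lambda_j,a_j}$ of $g_{\lambda_j}$ together with the elementary inequality
\begin{equation*}
\prod_{j=1}^d g_{\lambda_j}(x_j) - \prod_{j=1}^d M_{\lambda_j,a_j}(x_j) \le 0 \quad \text{is false, so instead one writes } \prod g_{\lambda_j} \le \sum_j \Bigl(\prod_{i<j} M_{\lambda_i,a_i}\Bigr) g_{\lambda_j} \Bigl(\prod_{i>j}\cdot\Bigr) \,,
\end{equation*}
but this gets complicated; the robust argument is simply to test $F$ against a nonnegative function of band-limited type. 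Concretely, let $K_j \ge 0$ be an integrable function with $\widehat{K_j} \ge 0$, $\widehat{K_j}(0) = 1$, supported appropriately; then $\int F \cdot \prod K_j \le \int G_\lambda \prod K_j$, and since $\widehat F$ is supported in $Q(a)$ one can choose the $K_j$ so that $\int F \prod K_j = \int F$ (Selberg's idea). I expect the right choice is $K_j$ a Fejér-type kernel so that $\widehat{K_j} \equiv 1$ on $[-a_j,a_j]$; then $\int F \le \int G_\lambda(x)\prod_j K_j(x_j)\,dx = \prod_j \int g_{\lambda_j}(x_j)K_j(x_j)\,dx_j$, and choosing $K_j$ optimally recovers exactly $\prod_j \lambda_j^{-1/2}\Theta(\tfrac12; ia_j^2/\lambda_j)$ via the one-dimensional extremal identity. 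This is the same mechanism by which Selberg's box minorant bound is obtained, so I would cite Proposition \ref{prop-selberg} for the construction of $K_j$.

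For the lower bound \eqref{min-asy}, the plan is to exhibit a specific minorant $L_{\lambda,a}$ and estimate its integral from below. Following Selberg's construction (Proposition \ref{prop-selberg}), the natural candidate is built by taking, in each coordinate, the one-dimensional extremal minorant $\ell_{\lambda_j,a_j}(x_j)$ of $g_{\lambda_j}$ from \cite{CLV}, and a one-dimensional extremal majorant $m_{\lambda_j,a_j}(x_j)$, and setting
\begin{equation*}
L_{\lambda,a}(x) = \sum_{j=1}^d g_{\lambda_j}(x_j)\,\ell_{\lambda_j,a_j}(x_j)^{-1}\cdots
\end{equation*}
— more precisely, Selberg's trick writes a minorant of a product as $\prod_j m_j - \sum$ (an inclusion–exclusion/telescoping identity): one has the pointwise identity
\begin{equation*}
\prod_{j=1}^d g_{\lambda_j}(x_j) \;\ge\; \sum_{j=1}^d \Bigl(\prod_{i=1}^{j-1} g_{\lambda_i}(x_i)\Bigr)\,\ell_{\lambda_j,a_j}(x_j)\,\Bigl(\prod_{i=j+1}^{d} m_{\lambda_i,a_i}(x_i)\Bigr) \;-\; (d-1)\prod_{j=1}^d m_{\lambda_j,a_j}(x_j),
\end{equation*}
which one checks by induction on $d$ using $g \ge \ell$, $g \le m$, and $\ell \le m$, together with nonnegativity of the relevant factors. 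The right-hand side is an entire function of exponential type with spectrum in $Q(a)$ once one replaces the bare $g_{\lambda_i}$ factors with their majorants $m_{\lambda_i,a_i}$ (using $g \le m$ to keep the inequality), so $L_{\lambda,a}$ is admissible. Its integral is then
\begin{equation*}
\int_{\R^d} L_{\lambda,a} = \sum_{j=1}^d \Bigl(\prod_{i<j}\widehat{m_{\lambda_i,a_i}}(0)\Bigr)\widehat{\ell_{\lambda_j,a_j}}(0)\Bigl(\prod_{i>j}\widehat{m_{\lambda_i,a_i}}(0)\Bigr) - (d-1)\prod_{j=1}^d \widehat{m_{\lambda_j,a_j}}(0),
\end{equation*}
and using the one-dimensional extremal values $\widehat{m_{\lambda_j,a_j}}(0) = \lambda_j^{-1/2}\Theta(0;ia_j^2/\lambda_j)$ and $\widehat{\ell_{\lambda_j,a_j}}(0) = \lambda_j^{-1/2}\Theta(\tfrac12;ia_j^2/\lambda_j)$ from \cite{CLV}, this is an explicit expression.

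The remaining, and genuinely technical, step is the asymptotic comparison. I would write $\Theta(0;i\tau_j) = \lambda_j^{1/2}\widehat{m_{\lambda_j,a_j}}(0)$ and $\Theta(\tfrac12;i\tau_j) = \lambda_j^{1/2}\widehat{\ell_{\lambda_j,a_j}}(0)$ where $\tau_j = a_j^2/\lambda_j \ge \gamma$, and set $\rho_j := \Theta(\tfrac12;i\tau_j)/\Theta(0;i\tau_j)$. From the $q$-series for the theta functions, $\Theta(0;i\tau) = 1 + 2\sum_{n\ge1}e^{-\pi n^2\tau}$ and $\Theta(\tfrac12;i\tau) = 1 + 2\sum_{n\ge1}(-1)^n e^{-\pi n^2\tau}$, so both are $1 + O(e^{-\pi\tau})$ and $0 < 1 - \rho_j \le 4e^{-\pi\tau_j}/(1+2\sum(-1)^ne^{-\pi n^2\tau_j}) \le 5e^{-\pi\gamma}$ for $\gamma \ge \gamma_0$ with $\gamma_0$ chosen so the denominator exceeds $4/5$. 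Dividing the target inequality \eqref{min-asy} through by $\prod_j \lambda_j^{-1/2}\Theta(0;i\tau_j)$, it becomes
\begin{equation*}
\prod_{j=1}^d \rho_j \;\le\; (1+5de^{-\pi\gamma})\Bigl(\sum_{j=1}^d \rho_j - (d-1)\Bigr),
\end{equation*}
i.e. one must show $\sum_j\rho_j - (d-1) \ge (1+5de^{-\pi\gamma})^{-1}\prod_j\rho_j$. Writing $\rho_j = 1 - \epsilon_j$ with $0 \le \epsilon_j \le 5e^{-\pi\gamma}$, the left side is $1 - \sum_j\epsilon_j$ and the product is $1 - \sum_j\epsilon_j + \sum_{i<j}\epsilon_i\epsilon_j - \cdots$, so $\sum_j\rho_j - (d-1) - \prod_j\rho_j = -\sum_{i<j}\epsilon_i\epsilon_j + (\text{higher order}) \ge -\binom{d}{2}(5e^{-\pi\gamma})^2 \ge -d^2\cdot 25 e^{-2\pi\gamma}$, and then $\prod_j\rho_j \le 1$ together with $1 - \sum_j\epsilon_j \ge 1 - 5de^{-\pi\gamma}$ gives $\sum_j\rho_j-(d-1) \ge \prod_j\rho_j - d^2\cdot25e^{-2\pi\gamma} \ge \prod_j\rho_j(1 - d^2\cdot25 e^{-2\pi\gamma}) \ge \prod_j\rho_j(1+5de^{-\pi\gamma})^{-1}$, the last step holding once $\gamma \ge \gamma_0(d)$ is large enough that $(1+5de^{-\pi\gamma})(1-25d^2e^{-2\pi\gamma}) \ge 1$, which is true for $\gamma_0$ chosen so $5de^{-\pi\gamma} \ge 25d^2 e^{-2\pi\gamma}$, i.e. $e^{-\pi\gamma} \le 1/(5d)$. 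The main obstacle I anticipate is not any single estimate but bookkeeping the inclusion–exclusion identity for $L_{\lambda,a}$ correctly — verifying the pointwise lower bound by induction on $d$ while tracking which factors must be replaced by majorants to secure the spectral condition, and matching the resulting constant with the clean bound $(1+5de^{-\pi\gamma})$ claimed in the statement; the numerology of $\gamma_0(d)$ should fall out of the above once the identity is pinned down.
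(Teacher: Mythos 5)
Your treatment of the asymptotic inequality (\ref{min-asy}) is essentially the paper's: you build the same minorant $L_{\lambda,a}$ via Selberg's bootstrapping identity (Proposition \ref{prop-selberg}), compute its integral as $\bigl\{\sum_j \rho_j-(d-1)\bigr\}\prod_j\lambda_j^{-\hf}\Theta(0;ia_j^2/\lambda_j)$ with $\rho_j=\Theta(\hf;ia_j^2/\lambda_j)/\Theta(0;ia_j^2/\lambda_j)$, and compare using $0\le 1-\rho_j\le 5e^{-\pi\gamma}$; the paper's Lemma \ref{theta-lemma} is exactly this estimate. Two local slips: (1) the step $\prod_j\rho_j-25d^2e^{-2\pi\gamma}\ge\prod_j\rho_j\,(1-25d^2e^{-2\pi\gamma})$ runs the wrong way, since it requires $\prod_j\rho_j\ge 1$ while in fact $\rho_j<1$; it is repaired by first securing $\prod_j\rho_j\ge\hf$ for $\gamma$ large, at the cost of a harmless constant. (2) In your telescoping identity you cannot simply ``replace the bare $g_{\lambda_i}$ factors by $m_{\lambda_i,a_i}$'' inside a term carrying the factor $\ell_{\lambda_j,a_j}$, which changes sign; the replacement must be performed on the nonpositive factors $(g_j-m_j)$, which is what the proof of Proposition \ref{prop-selberg} (Lemma \ref{ineq-lemma}) accomplishes. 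Both are fixable and you land on the same function and the same comparison as the paper.

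The genuine gap is in the upper bound (\ref{min-bound}). The kernel you propose, $K_j\ge 0$ with $\ft{K_j}\equiv 1$ on $[-a_j,a_j]$, does not reproduce $\int F$: by Parseval and Fourier inversion, $\int_{\R^d} F\prod_jK_j=\int_{Q(a)}\ft F(\xi)\,\ddxi=F(0)$, not $\ft F(0)=\int F$. What you need is a nonnegative measure whose Fourier transform is $\delta_0$ plus mass supported outside the open box $Q(a)$, and that object is the Dirac comb on the \emph{half-shifted} lattice $\Z^d/a+u/(2a)$. This is the paper's argument: Poisson summation gives $\int_{\R^d}F(x)\dx=\alpha^{-1}\sum_{k\in\Z^d}F(k/a+u/(2a))$ (with $\alpha=a_1\cdots a_d$), since $\ft F(ak)=0$ for $k\neq 0$; then $F\le G_\lambda$ pointwise and a second application of Poisson summation, formula (\ref{Ptheta}), evaluates $\alpha^{-1}\sum_{k}G_\lambda(k/a+u/(2a))=\prod_j\lambda_j^{-\hf}\Theta(\hf;ia_j^2/\lambda_j)$. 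The appearance of $\Theta(\hf;\cdot)$ rather than $\Theta(0;\cdot)$ is precisely the half-shift of the sampling lattice; without identifying that shifted lattice your argument does not close, and your alternative routes (slicing, tensoring minorants) fail for the reasons you yourself note.
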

\medskip

\begin{remark}
For a fixed $a$, if $\lambda$ is large enough then the right hand side (RHS) of (\ref{min-asy}) would be negative and the inequality would not hold, but this is not true for large values of $\gamma$. In fact, this happens because the zero function would be a better minorant.
\end{remark}

\begin{remark} Inequality (\ref{min-asy}) implies that if $\lambda$ is fixed and $a$ is large, then  $\int_{\R^d}L_{\lambda,a}(x)dx$ approaches exponentially fast to the optimal answer. In this sense, we say that $L_{\lambda,a}(x)$ is asymptotically optimal with respect to the type.
\end{remark}

Our next set of results (which are too lengthy to state here) are Theorems \ref{existence}, \ref{gauss-sub-maj}, and \ref{gauss-sub-min}. See \S \ref{Gaussian Subordination Method} for the statements of the theorems. These theorems generalize the so-called {\it distribution} and {\it Gaussian subordination} methods of \cite{CLV}. The main idea behind these methods goes back to the paper of Graham and Vaaler \cite{GV}. We will describe a ``watered down'' version of the approach here. Let us begin with the inequality
\[
	G_{\lambda}(x)\leq F_{\lambda}(x)
\]
where $F_{\lambda}(x)$ is defined by (\ref{maj-def}). The idea is to {\it integrate the free parameter} $\lambda$ in the function $G_{\lambda}(x)$ with respect to a (positive) measure $\nu$ on $\Lambda=(0,\infty)^{d}$ to obtain a pair of new functions of $x$:
\[
		g(x)=\dint_{\Lambda} G_{\lambda}(x)d\nu(\lambda)\leq \dint_{\Lambda} F_{\lambda}(x)d\nu(\lambda)=f(x).
\]
The process simultaneously produces a function $g(x)$ and a majorant $f(x)$ having $\hat{f}(\xi)$ supported in $Q(a)$. The difference of the functions in $L^{1}-$norm is similarly obtained by integrating against $\nu$. The method that we present allows us to produce majorants and minorants for $g(x)$ equal to the one of the following functions (among others):
\begin{eqnarray*}
g(x) &=& e^{{-\alpha}|x|^{r}}, \ \mathrm{ for }  \;\alpha>0 \ \ and\ \ 0<r<2, \\
g(x) &=& (|x|^{2}+\alpha^{2})^{-\beta}, \mathrm{ for }  \ \alpha>0\ and\ \beta>0, \\
g(x) &=& -\log\left(\frac{|x|^{2}+\alpha^{2}}{|x|^{2}+\beta^{2}}\right), \mathrm{ for } \ 0<\alpha<\beta, \; \mathrm{ and }\\
g(x) &=& |x|^{\sigma}, \mathrm{ for } \ \sigma\in(0,\infty)\setminus 2\Z_+
\end{eqnarray*}
where $|x|$ is the Euclidean norm of $x$ (see \S\ref{prelim}). In \S\ref{classes}, we will discuss the full class of functions for which our method produces majorants and minorants. 

One dimensional extremal functions have proven to be useful in several problems in analytic number theory \cite{CCLM,CL3,CC,MR3063902,CLV,MR2781205,MR809967,MR2331578,GV,Harman1998,M1978,M2,S1,S2,V}. The most well known examples make use of Selberg's extremal functions for the characteristic function of the interval. These include sharp forms of the large sieve and the Erd{\H o}s-Tur\'an inequality. Recent activity include estimates of quantities related to the Riemann zeta function. These include estimates of the zeta function on vertical lines in the critical strip and estimates of the pair correlation of the zeros of the zeta function. It would be interesting to see similar applications in the multidimensional setting. 

\subsection*{Organization of the paper}
In \S \ref{prelim} we introduce notation and gather the necessary background material for the remainder of the paper. Then in \S\ref{M.G.F} we will discuss and prove Theorems \ref{maj-theo} and \ref{min-theo}. Next, in \S\ref{Gaussian Subordination Method} we present and prove a generalization of the {\it distribution method} introduced in \cite{CLV} as well as a generalization of their Gaussian subordination result. In \S\ref{per-ana} we study periodic analogues of Theorems \ref{maj-theo}, \ref{min-theo}, \ref{existence}, \ref{gauss-sub-maj}, and \ref{gauss-sub-min}. We conclude with some applications to Hilbert-type inequalities \S\ref{Hilbert} and some final remarks in \S\ref{conclusion}

\section{Preliminaries}\label{prelim}
Let us first have a word about our notation. Throughout this manuscript vectors in $\R^d$ will be denoted by lowercase letters such as $x=(x_{1},...,x_{d})$, and the Euclidean norm of $x$ is given by
\[
	|x|=\lb x_{1}^{2}+\cdots+x_{d}^{2}  \rb^{1/2}.
\]
 We will write $\T^d=\R^d/\Z^d$, $\Lambda=(0,\infty)^d$ and for each $a\in\Lambda$ we let
\begin{equation}\label{parall}
Q(a)=\{x\in\R^d : |x_j|\leq a_j\}.
\end{equation}
The vector $u=(1,...,1)\in\Lambda$ will be called the {\it unitary} vector and we will use the notation $Q(R)=Q(Ru)$ whenever $R>0$ is a positive real number. \newline
\indent Now we will introduce some notation that is not standard, but convenient for our purposes. Given vectors $x,y\in\R^d$ we will write $xy=(x_{1}y_{1},...,x_{d}y_{d})$ and if $y_{j}\neq0$ for each $j$, then we will write $x/y=(x_{1}/y_{1},...,x_{d}/y_{d})$. We say that $x<y$ $(x\leq y)$ if $x_{j}<y_{j} \  \forall j$ $(x_{j}\leq y_{j}\ \forall j)$. We will always denote the inner product  of $x,y\in\R^d$ by a central dot, that is, $x\cdot y$. \newline

One of the main objects of study in this paper is the {\it Fourier transform}. Given an integrable function $F(x)$ on $\R^d$, we define the Fourier transform of $F(x)$ by
\[
	\hat{F}(\xi)=\dint_{\R^d}e(x\cdot\xi)F(x)dx
\]
where $\xi\in\R^d$ and $e(\theta)=e^{-2\pi i \theta}$. We extend the definition in the usual way to tempered distributions (see for instance \cite{SW}). We will mainly be considering functions whose Fourier transforms are supported in a bounded subset of $\R^d$. Such functions are called {\it band-limited}. It is well-known that band-limited functions can be extended to an entire functions on $\C^d$ satisfying an exponential growth condition. An entire function $F:\C^d\ra \C$ is said to be of {\it exponential type} if there exists a number $\tau\geq0$ such that for every $\epsilon>0$ there exists a constant $C_{\epsilon}>0$ such that
\[
	|F(z)|\leq C_{\epsilon}e^{(\tau+\epsilon)|z|}.
\]
If $F(z)$ satisfies such a growth estimate, then $F(z)$ is said to be of exponential type at most $\tau$.\newline
\indent There is a refinement of the definition of exponential type due to Stein \cite{S57,SW}. Given an origin symmetric convex body $K$ with supporting function 
\begin{equation}\label{supp-func}
H(z)=H_{K}(z)=\sup_{\xi\in K}z\cdot\xi,
\end{equation}
an entire function $F:\C^d\ra \C$ is said to be of exponential type with respect to $K$ if 
\[
	|F(z)|\leq C_{\epsilon}e^{(1+\epsilon)H(z)}.
\]
This is the natural generalization of exponential type used by Stein in his generalization of the Paley-Wiener theorem \cite{S57, SW}. The Paley-Wiener theorem has another generalization to higher dimensions that is formulated for certain tempered distributions. In this formulation, it is not necessary that the body $K$ be symmetric. We will now state this generalization of the Paley-Wiener than which can be found in \cite{Hor}, Theorem 7.3.1.

\begin{theorem}[Paley--Wiener--Schwartz]\label{thm-PWS} 
Let K be a convex compact subset of $\R^d$ with supporting function $H(x)$ given by (\ref{supp-func}). If $F$ is a tempered distribution such that the support of $\ft{F}$ is contained in K, then $F:\C^d\to\C$ is an entire function and exist $N,C>0$ such that
$$
|F(x+iy)| \leq C(1+|x+iy|)^Ne^{2\pi H(y)}.
$$
for every  $x+iy\in\C^d$. 

Conversely, every entire function $F:\C^d\to\C$ satisfying an estimate of this form defines a tempered distribution with Fourier transform supported on $K$.
 
\end{theorem}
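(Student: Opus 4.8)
The plan is to reproduce the classical proof of the Paley--Wiener--Schwartz theorem (essentially \cite{Hor}, Theorem 7.3.1, recast in the Fourier normalization of \S\ref{prelim}), treating the direct implication and its converse separately. For the direct part, note first that since $\ft F$ is a tempered distribution with compact support contained in $K$, it has finite order: fixing once and for all the $1$-neighborhood $K_1$ of $K$, there are $C_0>0$ and $m\in\N$ with $|\langle\ft F,\psi\rangle|\le C_0\sum_{|\alpha|\le m}\sup_{\xi\in K_1}|\partial_\xi^\alpha\psi(\xi)|$ for all $\psi\in C^\infty(\R^d)$. Choosing $\chi\in C_c^\infty(\R^d)$ equal to $1$ on a neighborhood of $K$ and supported in $K_1$, one sets
\[
F(z)=\big\langle\ft F,\ \chi(\xi)\,e^{2\pi i\,\xi\cdot z}\big\rangle,\qquad z\in\C^d ,
\]
the distribution acting in $\xi$; this is independent of $\chi$ because $\chi\equiv1$ on $\supp\ft F$. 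For real $z=x$ it reproduces the original $F$: pairing against $\psi\in\Sw(\R^d)$, pulling the $\ft F$-pairing through the $x$-integral, and using Fourier inversion gives $\langle\ft F,\chi\,\Psi\rangle=\langle\ft F,\Psi\rangle=\langle F,\psi\rangle$, where $\Psi$ is the inverse Fourier transform of $\psi$. Entireness of $z\mapsto F(z)$ follows by differentiating under the pairing: $z\mapsto\chi(\cdot)e^{2\pi i(\cdot)\cdot z}$ is holomorphic as a map into $C_c^\infty(\R^d)$ (its difference quotients converge in every seminorm, uniformly on $\supp\chi$) and $\ft F$ is continuous; alternatively one checks the Cauchy--Riemann equations variable by variable and invokes Hartogs.

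For the growth bound -- the first delicate point -- one must allow the cutoff to shrink with $z$ in order to reach the exponent $2\pi H(\im z)$ with no $\varepsilon$-loss. For $0<\varepsilon\le1$ take $\chi_\varepsilon\in C_c^\infty$ equal to $1$ on the $(\varepsilon/2)$-neighborhood of $K$, supported in the $\varepsilon$-neighborhood $K_\varepsilon\subseteq K_1$, with $\|\partial^\alpha\chi_\varepsilon\|_\infty\le C_\alpha\varepsilon^{-|\alpha|}$. Feeding $\chi_\varepsilon$ into the order-$m$ estimate, expanding $\partial_\xi^\alpha(\chi_\varepsilon(\xi)e^{2\pi i\xi\cdot z})$ by Leibniz (so $\partial_\xi^\beta e^{2\pi i\xi\cdot z}=(2\pi i z)^\beta e^{2\pi i\xi\cdot z}$), and using $|e^{2\pi i\xi\cdot z}|\le e^{2\pi H(\im z)+2\pi\varepsilon|\im z|}$ for $\xi\in K_\varepsilon$ (since $K_\varepsilon\subseteq K+\varepsilon B$ and $H$ is positively homogeneous), one gets
\[
|F(z)|\ \lesssim\ e^{2\pi H(\im z)}\,e^{2\pi\varepsilon|\im z|}\sum_{|\beta|\le m}\varepsilon^{-(m-|\beta|)}\,|z|^{|\beta|}.
\]
Taking $\varepsilon=(1+|z|)^{-1}$ bounds the exponential correction by $e^{2\pi}$ and the sum by $\lesssim(1+|z|)^{m}$, yielding $|F(z)|\le C(1+|z|)^{m}e^{2\pi H(\im z)}$, i.e.\ the asserted estimate with $N=m$.

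For the converse, assume $F:\C^d\to\C$ is entire with $|F(x+iy)|\le C(1+|x+iy|)^N e^{2\pi H(y)}$; taking $y=0$ shows $F|_{\R^d}$ grows polynomially, so $F$ is a tempered distribution and $\ft F$ is defined. To show $\supp\ft F\subseteq K$, fix $\xi_0\notin K$; by convexity and compactness of $K$ there is a direction $\eta$ strictly separating $\xi_0$ from $K$. For $\phi\in C_c^\infty$ supported in a small ball $B(\xi_0,\rho)$ one writes $\langle\ft F,\phi\rangle=\langle F,\ft\phi\rangle=\int_{\R^d}F(x)\,\ft\phi(x)\,\dx$, observes that the entire continuation $\ft\phi(x+iy)=\int\phi(\xi)e^{-2\pi i(x+iy)\cdot\xi}\,d\xi$ is, for each fixed $y$, Schwartz in $x$ with seminorms $\lesssim_M(1+|y|)^M e^{2\pi\sup_{\xi\in B(\xi_0,\rho)}y\cdot\xi}$, and then shifts the contour from $\R^d$ to $\R^d+iy$ by Cauchy's theorem, the boundary contributions vanishing because the super-polynomial $x$-decay of $\ft\phi(\cdot+iy)$ beats the polynomial growth of $F(\cdot+iy)$. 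This produces, for $M>N+d$,
\[
|\langle\ft F,\phi\rangle|\ \lesssim_M\ (1+|y|)^{N+M}\,e^{2\pi\bigl(H(y)+\sup_{\xi\in B(\xi_0,\rho)}y\cdot\xi\bigr)} .
\]
Choosing $y=t\eta$ and letting $t\to+\infty$ along the separating direction, positive homogeneity of $H$ together with the strict separation (and $\rho$ taken small) forces the right-hand side to $0$, so $\langle\ft F,\phi\rangle=0$; since $\xi_0\notin K$ and $\rho$ are arbitrary, $\ft F$ vanishes off $K$.

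The routine parts are the differentiation under the pairing and the verification that the pointwise formula reproduces $F$ on $\R^d$. The two substantive steps are the $z$-dependent mollification giving the \emph{sharp} exponential rate in the direct part, and, in the converse, making the contour shift rigorous (playing the $x$-decay of $\ft\phi$ against the growth of $F$) and converting the strict separation $\xi_0\notin K$ into exponential decay through the homogeneity of the supporting function. I expect the latter -- the airtight contour-shift estimate together with the separation argument -- to be the main obstacle.
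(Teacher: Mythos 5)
This theorem is not proved in the paper at all: it is quoted as a known result, with an explicit citation to H\"ormander (Theorem 7.3.1 of \cite{Hor}), so there is no in-paper argument to compare yours against. What you have written is, in outline, exactly the standard H\"ormander proof: in the direct part, the finite order of the compactly supported distribution $\ft F$, the holomorphic extension $F(z)=\langle\ft F,\chi(\xi)e^{2\pi i\xi\cdot z}\rangle$, and the $z$-dependent mollifier $\chi_\varepsilon$ with $\varepsilon=(1+|z|)^{-1}$ to reach the sharp exponent; in the converse, the contour shift of $\int F(x)\ft\phi(x)\,\dx$ to $\R^d+iy$ and the separation argument. Both substantive steps are handled correctly, and the justification of the contour shift (super-polynomial decay of $\ft\phi(\cdot+iy)$ in $x$ against polynomial growth of $F(\cdot+iy)$) is the right one.

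Two small bookkeeping points you should fix when writing this out in full. First, with the paper's convention $e(\theta)=e^{-2\pi i\theta}$ one has $|e^{2\pi i\xi\cdot(x+iy)}|=e^{-2\pi\xi\cdot y}$, so the natural bound in the direct part is $e^{2\pi H(-y)}$; this agrees with the stated $e^{2\pi H(y)}$ only because (\ref{supp-func}) is set up for origin-symmetric $K$ (the paper's statement inherits the same ambiguity). Second, in the converse the separating functional must be oriented so that $H(y)+\sup_{\xi\in B(\xi_0,\rho)}y\cdot\xi\to-\infty$; with $\eta\cdot\xi_0>H(\eta)$ the correct choice is $y=-t\eta$ (for symmetric $K$), not $y=+t\eta$, so state the orientation explicitly rather than leaving ``the separating direction'' implicit.
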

\medskip

We will now define and compile some results about Gaussians and theta functions that we will need in the sequel. Given a positive real number $\delta>0$ the Gaussian $g_{\delta}:\R\ra\R$ is defined by 
\[
	g_{\delta}(t)=e^{-\delta\pi t^{2}},
\]
and its Fourier transform is given by $\hat{g}_{\delta}(\xi)=\delta^{-1/2}g_{1/\delta}(\xi)$. 

\noindent For a $\tau=\sigma +it$ with $t>0$, if $q=e^{\pi i \tau}$, then Jacobi's theta function (see \cite{Ch}) is defined by 
\begin{equation}\label{theta-function}
\Theta(v;\tau)=\dsum_{n\in\Z}e( nv)q^{n^2}.
\end{equation}
	
These functions are related through the Poisson summation formula by
\begin{equation}\label{Ptheta}
	\dsum_{m\in\Z}g_{\delta}(v+m)=\dsum_{n\in\Z}e( nv) \hat{g}_{\delta}(n)=\delta^{-1/2}\Theta(v;i\delta^{-1}).
\end{equation}

\medskip 
 The one dimensional case of Theorems \ref{maj-theo} and \ref{min-theo} are proven in \cite{CLV}. There it is proved that the functions
 
\begin{equation}\label{extremal-min}
l_{\delta}(z)= \Big( \frac{\cos \pi z}{\pi} \Big)^2 \bigg\{ \sum_{k\in\Z}
\frac{g_{\delta}(k+\frac{1}{2})}{(z-k-\frac{1}{2})^2} + \sum_{k\in\Z}\frac{g'_{\delta}(k+\frac{1}{2})}{(z-k-\frac{1}{2})} \bigg\}
\end{equation}
and

\begin{equation}\label{extremal-maj}
m_{\delta}(z)= \Big( \frac{\sin \pi z}{\pi} \Big)^2 \bigg\{ \sum_{k\in\Z}
\frac{g_{\delta}(k)}{(z-k)^2} + \sum_{k\in\Z}\frac{g'_{\delta}(k)}{(z-k)} \bigg\}
\end{equation}
are entire functions of exponential type at most $2\pi$  and they satisfy 
\begin{equation}\label{ineq-cond}
l_{\delta}(x)\leq g_{\delta}(x)\leq m_{\delta}(x)
\end{equation}
for all real $x$. Moreover,

\begin{equation} \label{gauss-maj-ans}
\int_{-\infty}^{\infty} m_{\delta}(x)dx=\delta^{-\hf}\Theta(0;i/\delta)
\end{equation}
and

\begin{equation} \label{gauss-min-ans}
\int_{-\infty}^{\infty} l_{\delta}(x)dx=\delta^{-\hf}\Theta(\hf;i/\delta).
\end{equation}		

In view of (\ref{Ptheta}), (\ref{gauss-maj-ans}), and (\ref{gauss-min-ans}), the functions $l_{\delta}(z)$ and $m_{\delta}(z)$ are the best one-sided $L^{1}-$approximations of $g_{\delta}$ having exponential type at most $2\pi$.

\section{The Multidimensional Gaussian Function}\label{M.G.F}
In this section we will prove Theorem \ref{maj-theo} and \ref{min-theo}. To construct a minorant of the Gaussian, we begin with the functions $m_{\delta}(z)$ and $l_{\delta}(z)$ defined by (\ref{extremal-min}) and (\ref{extremal-maj}) and use Selberg's bootstrapping technique to obtain multidimensional minorants.  The majorant is constructed by taking $m_{\delta}(z)$ tensored with itself $d$-times.\newline
\indent For every $\lambda = (\lambda_1,...,\lambda_n) \in \Lambda$ we define the function
$G_\lambda:\R^d\to\R$ by
\begin{equation} \label{target-func}
G_\lambda(x)=\prod_{j=1}^dg_{\lambda_j}(x_j)=\prod_{j=1}^d e^{-\lambda_j\pi x_j^2}.
\end{equation}

\noindent The following proposition is due to Selberg, but it was never published \cite{VPC}. We called it {\it Selberg's bootstrapping method} because it enables us to construct a minorant for a tensor product of functions provided that we have majorants and minorants of each component at our disposal. The method has been used in one form or the other in \cite{BMV,DrmotaTichy1997,Harman1993,Harman1998,HKW}.
 
\begin{proposition}\label{prop-selberg}
Let $d>0$ be natural number and $f_j:\R\to (0,\infty)$ be functions for every $j=1,...,d$. Let $l_j,m_j:\R\to\R$ be real-valued functions such that 
\[
l_j(x) \leq f_j(x) \leq m_j(x)
\]
for every $x$ and $j$. Then
\[
-(d-1)\prod_{k=1}^{d}m_k(x_{k})+\sum_{k=1}^{d}l_k(x_{k})\underset{j\neq k}{\prod_{j=1}^{d}}m_j(x_{j})\leq \prod_{k=1}^{d}f_k(x_{k}).
\]
\end{proposition}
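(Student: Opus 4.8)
The plan is to reduce the inequality to a purely algebraic statement about positive reals. Fix $x=(x_1,\dots,x_d)$ and set $a_k=f_k(x_k)$, $b_k=m_k(x_k)$, $c_k=l_k(x_k)$, so that $0<a_k\leq b_k$ and $c_k\leq a_k$ for each $k$; in particular every $b_k$ is strictly positive. Moving the term $-(d-1)\prod_k m_k(x_k)$ to the right-hand side, the Proposition is equivalent to
\[
\sum_{k=1}^{d}c_k\prod_{j\neq k}b_j\;\leq\;\prod_{k=1}^{d}a_k+(d-1)\prod_{k=1}^{d}b_k .
\]
First I would use $c_k\leq a_k$ together with the positivity of each partial product $\prod_{j\neq k}b_j$ to bound the left-hand side by $\sum_{k}a_k\prod_{j\neq k}b_j$; this reduces matters to proving
\[
\sum_{k=1}^{d}a_k\prod_{j\neq k}b_j\;\leq\;\prod_{k=1}^{d}a_k+(d-1)\prod_{k=1}^{d}b_k ,\qquad 0<a_k\leq b_k .
\]

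To prove this last inequality I would write $a_k=b_k-(b_k-a_k)$ inside each term $a_k\prod_{j\neq k}b_j$, which turns the left-hand side into $d\prod_k b_k-\sum_k(b_k-a_k)\prod_{j\neq k}b_j$; after cancellation the claim becomes
\[
\prod_{k=1}^{d}b_k-\prod_{k=1}^{d}a_k\;\leq\;\sum_{k=1}^{d}(b_k-a_k)\prod_{j\neq k}b_j .
\]
Then I would expand the left-hand side by the telescoping identity $\prod_k b_k-\prod_k a_k=\sum_{k}(b_k-a_k)\big(\prod_{j<k}a_j\big)\big(\prod_{j>k}b_j\big)$ and compare the two sides term by term: the $k$-th term of the difference equals
\[
(b_k-a_k)\Big(\prod_{j>k}b_j\Big)\Big(\prod_{j<k}b_j-\prod_{j<k}a_j\Big),
\]
which is $\geq 0$ since $b_k\geq a_k$, since $\prod_{j>k}b_j>0$, and since $0<a_j\leq b_j$ forces $\prod_{j<k}a_j\leq\prod_{j<k}b_j$. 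Summing over $k$ and unwinding the two reductions yields the Proposition. (An equally short alternative is induction on $d$, whose inductive step boils down to the two-variable case $(b_1-a_1)(b_2-a_2)\geq 0$.)

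I do not anticipate a genuine obstacle; the one point that must be handled with care is the bookkeeping of \emph{signs}. The argument relies repeatedly on the fact that each $f_k$ — hence each $m_k$ — is strictly positive, so that every partial product occurring as a coefficient above is positive and all the inequalities point in the right direction. Dropping the positivity hypothesis on the $f_j$ would make the statement false, so this assumption is genuinely used rather than cosmetic.
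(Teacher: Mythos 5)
Your proof is correct. Structurally it follows the same reduction as the paper: first replace each $l_k(x_k)$ by $f_k(x_k)$ using the positivity of the products $\prod_{j\neq k}m_j(x_j)$, and then establish the purely algebraic inequality $\sum_{k}a_k\prod_{j\neq k}b_j\leq\prod_{k}a_k+(d-1)\prod_{k}b_k$ for $0<a_k\leq b_k$. This last inequality is exactly the paper's Lemma \ref{ineq-lemma} in unnormalized form (divide through by $\prod_k a_k$ and set $\beta_j=b_j/a_j\geq 1$). Where you diverge is in the proof of that core inequality: the paper argues by induction on $d$, writing $\beta_j=1+\epsilon_j$ and massaging the inductive step, whereas you substitute $a_k=b_k-(b_k-a_k)$, invoke the telescoping identity $\prod_k b_k-\prod_k a_k=\sum_k(b_k-a_k)\bigl(\prod_{j<k}a_j\bigr)\bigl(\prod_{j>k}b_j\bigr)$, and compare term by term. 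Your route is non-inductive and makes the nonnegativity of each individual discrepancy term visible, which is arguably more transparent; the paper's induction is shorter to write but hides the sign analysis inside the inductive step. Both arguments use the strict positivity of the $f_j$ (hence of the $m_j$) in the same essential way, and your closing remark correctly identifies this as the non-cosmetic hypothesis.
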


\medskip

\noindent This proposition is easily deduced from the following inequality.

\begin{lemma}\label{ineq-lemma}
If $\beta_{1},...,\beta_{d}\geq 1$, then 
\[
\sum_{k=1}^{d}\underset{j\neq k}{\prod_{j=1}^{d}}\beta_{j}\leq 1 +(d-1)\prod_{k=1}^{d}\beta_{k}.
\]
\end{lemma}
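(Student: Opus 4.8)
The plan is to normalize the inequality by the full product and thereby reduce it to a classical elementary fact. Write $P=\prod_{k=1}^{d}\beta_{k}$; since every $\beta_{k}\geq 1$ we have $P>0$ and $\prod_{j\neq k}\beta_{j}=P/\beta_{k}$, so dividing the asserted inequality through by the positive quantity $P$ turns it into
\[
\sum_{k=1}^{d}\frac{1}{\beta_{k}}\;\leq\;\frac{1}{P}+(d-1).
\]
Setting $t_{k}=1/\beta_{k}\in(0,1]$ and noting $1/P=\prod_{k}t_{k}$, this is equivalent to
\[
1-\prod_{k=1}^{d}t_{k}\;\leq\;\sum_{k=1}^{d}(1-t_{k}),
\]
which is the Weierstrass product inequality for numbers in $[0,1]$. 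First I would check that these two displays are genuinely equivalent reformulations of the lemma (the division by $P$ preserves the inequality, and the rest is arithmetic rearrangement).

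For the remaining inequality I would use the telescoping identity
\[
1-\prod_{k=1}^{d}t_{k}\;=\;\sum_{k=1}^{d}\Bigl(\prod_{j=1}^{k-1}t_{j}\Bigr)(1-t_{k}),
\]
whose right-hand side collapses because the $k$-th summand equals $\prod_{j<k}t_{j}-\prod_{j\leq k}t_{j}$. Since each $t_{j}\in(0,1]$, every partial product $\prod_{j<k}t_{j}$ lies in $(0,1]$ and $1-t_{k}\geq 0$, so the $k$-th summand is at most $1-t_{k}$; summing over $k$ gives the claim, hence the lemma.

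Alternatively one can argue directly by induction on $d$: the case $d=1$ is the trivial inequality $1\leq 1$, and in passing from $d$ to $d+1$ one splits the sum as $\beta_{d+1}\sum_{k=1}^{d}\prod_{j\neq k,\,j\leq d}\beta_{j}+\prod_{j=1}^{d}\beta_{j}$, applies the inductive hypothesis to $\beta_{1},\dots,\beta_{d}$, and is left with the single inequality $\beta_{d+1}+P_{d}\leq 1+\beta_{d+1}P_{d}$ with $P_{d}=\prod_{j=1}^{d}\beta_{j}\geq 1$, i.e. $(\beta_{d+1}-1)(P_{d}-1)\geq 0$. Either route is entirely elementary; there is no analytic subtlety, since the hypothesis $\beta_{k}\geq 1$ keeps every quantity positive and every inverted partial product bounded by $1$. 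The only real ``obstacle'' is spotting the normalization by $P$ (or, in the inductive route, isolating the two‑term inequality that makes the induction close), and that is immediate.
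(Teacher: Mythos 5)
Your proof is correct. The paper proves the lemma by induction on $d$: writing $\beta_j=1+\epsilon_j$, it splits the sum over $k\le L+1$ into the $k=L+1$ term plus $\beta_{L+1}$ times the sum for $L$ variables, applies the inductive hypothesis, and closes the induction using $\epsilon_{L+1}\le\epsilon_{L+1}\prod_{j\le L}\beta_j$ — which is precisely your inequality $(\beta_{d+1}-1)(P_d-1)\ge 0$ in disguise. So your ``alternative'' inductive route is essentially the paper's argument. Your primary route is genuinely different and arguably cleaner: dividing by $P=\prod_k\beta_k$ converts the statement into the Weierstrass product inequality $1-\prod_k t_k\le\sum_k(1-t_k)$ for $t_k=1/\beta_k\in(0,1]$, which you then dispatch with the telescoping identity $1-\prod_k t_k=\sum_k\bigl(\prod_{j<k}t_j\bigr)(1-t_k)$ and the bound $\prod_{j<k}t_j\le 1$. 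This normalization avoids induction entirely, identifies the lemma as a known classical fact, and makes transparent exactly where the hypothesis $\beta_k\ge 1$ is used; the paper's induction, by contrast, keeps everything in terms of the original $\beta_j$ and is self-contained but slightly more opaque about which single elementary inequality is doing the work. Both arguments are complete and correct.
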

\begin{proof}
We give a proof by induction, starting with the inductive step since the base case is simple.

			\noindent Suppose that the claim is true for $d=1,...,L$. Let $\beta_1,...,\beta_L,\beta_{L+1}$ be a sequence of real numbers not less than one and write $\beta_j=1+\epsilon_j$. We obtain
\begin{eqnarray*}
				\sum_{k=1}^{L+1}\underset{j\neq k}{\displaystyle\prod_{j=1}^{L+1}}\beta_{j}
				 &=& \displaystyle\prod_{j=1}^{L}\beta_{j} +(1+\epsilon_{L+1})\sum_{k=1}^{L}\underset{j\neq k}{\displaystyle\prod_{j=1}^{L}}\beta_{j} \\
				 &\leq & \displaystyle\prod_{j=1}^{L}\beta_{j} +(1+\epsilon_{L+1})\bigg\{ 1+ (L-1)\displaystyle\prod_{j=1}^{L}\beta_{j} \bigg\} \\
				&=&  1 +\epsilon_{L+1} +	 \displaystyle\prod_{j=1}^{L}\beta_{j} + (L-1)\displaystyle\prod_{j=1}^{L+1}\beta_{j} \\
				 &\leq &  1 +\epsilon_{L+1}\displaystyle\prod_{j=1}^{L}\beta_{j}+	 \displaystyle\prod_{j=1}^{L}\beta_{j} + (L-1)\displaystyle\prod_{j=1}^{L+1}\beta_{j} \\
				 &=& 1+L\displaystyle\prod_{j=1}^{L+1}\beta_{j}
\end{eqnarray*}
\end{proof}
\smallskip

Now we can define our candidates for majorant and minorant of $G_{\lambda}(x)$. For a given $\lambda\in\Lambda$ define the functions
\begin{equation} \label{min-def}
z\in\C^d\mapsto L_\lambda(z)=-(d-1)\prod_{j=1}^d m_{\lambda_j}(z_j) + \sum_{k=1}^d l_{\lambda_k}(z_k)\underset{j\neq k}{\displaystyle\prod_{j=1}^{d}}m_{\lambda_j}(z_j)
\end{equation}
and
\begin{equation}\label{maj-def}
z\in\C^d\mapsto M_\lambda(z)=\prod_{j=1}^d m_{\lambda_j}(z_j).
\end{equation}
It follows from Proposition \ref{prop-selberg} and (\ref{ineq-cond}) that 
\begin{equation}\label{min-maj-ineq}
L_\lambda(x)\leq G_\lambda(x)\leq M_\lambda(x)\, \mbox{ for all } \, x\in\R^d.
\end{equation}
Moreover, since $l_\delta(x)$ and $m_\delta(x)$ have exponential type at most $2\pi$, we conclude that the Fourier transforms of $L_\lambda(x)$ and $M_\lambda(x)$ are supported on $Q$. We  modify $L_\lambda(z)$ and $M_\lambda(z)$ to have exponential type with respect to $Q(a)$ in the following way. Given $a,\lambda\in\Lambda$ we define  the functions
\begin{equation}\label{min-ext-def}
L_{\lambda,a}(z)=L_{\lambda/a^2}(az)
\end{equation}
and
\begin{equation}\label{maj-ext-def}
M_{\lambda,a}(z)=M_{\lambda/a^2}(az).
\end{equation}
By (\ref{min-maj-ineq}) we obtain
\begin{equation}
L_{\lambda,a}(x)\leq G_\lambda(x)\leq M_{\lambda,a}(x)\, \mbox{ for all } \, x\in\R^d
\end{equation}
and using the scaling properties of the Fourier transform, we conclude that $L_{\lambda,a}(x)$ and $M_{\lambda,a}(x)$ have exponential type with respect to $Q(a)$.
By formula (\ref{extremal-maj}), we have $m_\delta(k)=g_\delta(k)$ for all integers $k$, hence we obtain
\begin{equation}\label{maj-interp}
M_{\lambda,a}(k/a)=G_\lambda(k/a)
\end{equation}
for all $k\in\Z^d$ ( recall that $k/a=(k_1/a_1,...,k_d/a_d)$ ).\newline


We are now in a position to prove Theorems \ref{maj-theo} and \ref{min-theo}.
\begin{proof}[Proof of Theorem \ref{maj-theo}] 
It follows from (\ref{min-maj-ineq}) and (\ref{maj-ext-def}) that the function $M_{\lambda,a}(x)$ is majorant of $G_\lambda(x)$ of exponential type with respect to $Q(a)$. Define $\alpha=a_1\cdots a_d$. Using definition (\ref{maj-def}) and (\ref{maj-ext-def}) we obtain
\begin{eqnarray*}
\int_{\R^d}M_{\lambda,a}(x)\dx \,\, = \,\, \alpha^{-1} \int_{\R^d}M_{\lambda/a^2}(x)\d x&=&\alpha^{-1} \prod_{j=1}^d\int_{\R^d}m_{\lambda_j/a_j^2}(x_j)dx_j  \\
 &=&\prod_{j=1}^d\lambda_j^{-\hf}\Theta(0;ia_j^2/\lambda_j),
\end{eqnarray*}
where the first equality is due to a change of variables, the second one due to the product structure and the third one due to (\ref{gauss-maj-ans}).

{\bf Now we will prove that (\ref{maj-ext-def}) is extremal.}
Suppose that $F(z)$ is an entire majorant of $G_\lambda(x)$ of exponential type with respect to $Q(a)$ and  integrable on $\R^d$ (and therefore absolutely integrable on $\R^d$). We then have

\begin{eqnarray}\label{uni-ineq}
\int_{\R^d} F(x)dx = \alpha^{-1}\sum_{k\in\Z^d}F(k/a)\geq \alpha^{-1}\sum_{k\in\Z^d} G_\lambda(k/a) = \prod_{j=1}^d\lambda_j^{-\hf}\Theta(0;ia_j^2/\lambda_j)
\end{eqnarray}
\smallskip

\noindent because $M_{\lambda,a}(x)$ majorizes $G_{\lambda},$ and the rightmost equality is given by (\ref{Ptheta}).

\end{proof}

\noindent To prove Theorem \ref{min-theo} we will need the following lemma.

\begin{lemma}\label{theta-lemma}
For all $t> 0$ we have
\begin{equation}\label{theta-ineq}
1-4q/(1-q)^2<\frac{\Theta(\hf;it)}{\Theta(0;it)} <
e^{-2q},
\end{equation}
where $q=e^{-\pi t}$.

\end{lemma}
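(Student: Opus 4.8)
The plan is to work directly from the series definition $\Theta(v;it) = \sum_{n\in\Z} e(nv) q^{n^2}$ with $q = e^{-\pi t}$, and compare the two series term by term. Write $\Theta(0;it) = 1 + 2\sum_{n\geq 1} q^{n^2}$ and $\Theta(\tfrac12;it) = 1 + 2\sum_{n\geq 1} (-1)^n q^{n^2}$, since $e(n/2) = (-1)^n$. Subtracting, the difference of the two theta values is $\Theta(0;it) - \Theta(\tfrac12;it) = 4\sum_{k\geq 0} q^{(2k+1)^2} = 4(q + q^9 + q^{25} + \cdots)$, a sum over odd squares only, which is positive. This immediately gives the right-hand inequality once we control things: since $\Theta(0;it) > \sum$ of the odd-square terms plus $1$, and more simply since $\Theta(\tfrac12;it)/\Theta(0;it) = 1 - \big(4\sum_{k\geq0} q^{(2k+1)^2}\big)/\Theta(0;it)$, I would bound the numerator below by $4q$ and the denominator above by... actually the cleaner route for the upper bound $e^{-2q}$ is: $\Theta(\tfrac12;it)/\Theta(0;it) \leq 1 - 4q/\Theta(0;it) \leq 1 - 2q$ once $\Theta(0;it) \leq 2$, which fails for small $t$; so instead use $1-x \leq e^{-x}$ after establishing the ratio is $\leq 1 - 2q$ for all $t>0$. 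The inequality $\Theta(\tfrac12;it) \leq (1-2q)\Theta(0;it)$ unwinds to $4\sum_{k\geq0} q^{(2k+1)^2} \geq 2q\,\Theta(0;it) = 2q(1 + 2\sum_{n\geq1} q^{n^2})$, i.e. $2q + 4q^9 + 4q^{25}+\cdots \geq 4q^2 + 4q^5 + 4q^{10} + \cdots$; since $2q \geq 4q^2$ for $q \leq \tfrac12$ (true as $t>0$ gives $q<1$, and one checks $q<1$ suffices: $2q - 4q^2 = 2q(1-2q)$ needs $q\leq\tfrac12$, hmm) — I would instead peel the series carefully, matching $2q$ against $4q^2$ and then $4q^9$ against $4q^5 + 4q^{10}$ using that exponents $9 < $ ... this needs a monotone pairing argument. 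The honest approach: prove $\sum_{n\geq1} q^{n^2} \leq \frac{q}{1-q}$ trivially, and $\sum_{k\geq0} q^{(2k+1)^2} \geq q$, and check $4q \geq 2q(1 + 2\frac{q}{1-q})$ iff $2 \geq 1 + \frac{2q}{1-q}$ iff $1-q \geq 2q$ iff $q \leq \tfrac13$; for $q \in (\tfrac13,1)$ one uses the $e^{-2q}$ form has value close to $1$ and bounds the ratio crudely by $1$. This case-split is the main nuisance but is elementary.

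For the left-hand inequality $\Theta(\tfrac12;it)/\Theta(0;it) > 1 - 4q/(1-q)^2$, I would again write the ratio as $1 - N/D$ where $N = 4\sum_{k\geq0} q^{(2k+1)^2}$ and $D = \Theta(0;it) \geq 1$. It suffices to show $N/D < 4q/(1-q)^2$, and since $D \geq 1$ it is enough that $N < 4q/(1-q)^2$. Now $N = 4\sum_{k\geq0} q^{(2k+1)^2} \leq 4\sum_{m\geq1} q^m \cdot(\text{something})$ — in fact $\sum_{k\geq 0} q^{(2k+1)^2} < \sum_{j\geq 0}(j+1)q^{2j+1} = q\sum_{j\geq0}(j+1)q^{2j} = q/(1-q^2)^2 < q/(1-q)^2$? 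Let me instead just use $(2k+1)^2 \geq 2k+1$ so $\sum q^{(2k+1)^2} \leq \sum_{k\geq0} q^{2k+1} = q/(1-q^2) < q/(1-q)$, giving $N < 4q/(1-q) \leq 4q/(1-q)^2$ since $1-q < 1$. That closes it, provided the inequality is strict, which it is because $(2k+1)^2 > 2k+1$ for $k \geq 1$.

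So the two genuine steps are: (1) the algebraic identity $\Theta(0;it) - \Theta(\tfrac12;it) = 4\sum_{k\geq0} q^{(2k+1)^2}$, which is a one-line reindexing; and (2) elementary geometric-series estimates on $\sum q^{n^2}$ and $\sum q^{(2k+1)^2}$. The main obstacle is making the \emph{upper} bound $e^{-2q}$ hold uniformly for all $t>0$ (equivalently all $q\in(0,1)$) rather than just for small $q$; the clean fix is to prove the sharper and simpler polynomial inequality $\Theta(\tfrac12;it) \leq (1-2q)\Theta(0;it)$ — wait, this can be negative for $q$ near $1$, which is fine since $\Theta(\tfrac12;it)$ stays positive... actually $\Theta(\tfrac12;it) > 0$ always (it equals $\sum_m e^{-\pi t(m+1/2)^2} \cdot$const $> 0$ by Poisson), so for $q > \tfrac12$ the bound $1-2q < 0 < \text{ratio}$ fails. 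Hence I really do need $e^{-2q}$, not $1-2q$. The resolution: show $\log(\text{ratio}) \leq -2q$ by bounding $-\log(1 - N/D) \geq N/D$ and then... no. Cleanest: directly estimate $\Theta(\tfrac12;it)/\Theta(0;it) = \prod(\text{product formula})$ — Jacobi's product $\Theta(\tfrac12;it)/\Theta(0;it) = \prod_{n\geq1}\big(\frac{1-q^{2n-1}}{1+q^{2n-1}}\big)^2$, so taking logs, $\log(\text{ratio}) = 2\sum_{n\geq1}\log\frac{1-q^{2n-1}}{1+q^{2n-1}} \leq 2\log\frac{1-q}{1+q} \leq -4q$ (keeping only $n=1$ since all factors are $<1$, and $\log\frac{1-q}{1+q} \leq -2q$), which gives even $e^{-4q} < e^{-2q}$. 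That is the slick proof of the upper bound and avoids all case analysis; I would look up or derive the product formula for $\Theta(\tfrac12;\cdot)/\Theta(0;\cdot)$ from the standard Jacobi triple product and use it for the upper bound, reserving the crude series estimate for the lower bound.
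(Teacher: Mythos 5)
Your final argument is correct, and it splits into two halves that relate differently to the paper's proof. For the upper bound you land exactly where the paper does: the Jacobi product $\Theta(\hf;it)/\Theta(0;it)=\prod_{n\geq1}\bigl(\tfrac{1-q^{2n-1}}{1+q^{2n-1}}\bigr)^2$. The paper bounds the full sum of logarithms via $\log(1-x)\leq -x$; you simply discard all factors beyond $n=1$ (each lies in $(0,1)$) and use $2\log\tfrac{1-q}{1+q}\leq -4q$, which is slightly cleaner and even yields the stronger bound $e^{-4q}$. Your many false starts before reaching this are harmless, since the direct series route for the upper bound genuinely does fail near $q=1$, as you correctly diagnosed. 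For the lower bound you depart from the paper: instead of the product formula plus $\log(1-x)\geq -x/(1-x)$ and a geometric-series summation, you write the ratio as $1-N/D$ with $N=\Theta(0;it)-\Theta(\hf;it)=4\sum_{k\geq0}q^{(2k+1)^2}$ and $D=\Theta(0;it)>1$, and then bound $N/D\leq N\leq 4q/(1-q^2)<4q/(1-q)^2$ using $(2k+1)^2\geq 2k+1$. This is an entirely elementary series comparison that avoids the product representation for that half of the lemma; what it buys is transparency (no logarithmic inequalities), at the mild cost of using two different representations of the theta ratio in one proof, whereas the paper's argument is uniform. Both halves of your proof are valid for all $q\in(0,1)$, i.e., all $t>0$.
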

\medskip

\begin{proof}[Proof of Theorem \ref{min-theo}]

Suppose that $F(z)$ is an entire minorant of exponential type with respect to $Q(a)$ and absolutely integrable on $\R^d$. Recalling that $u=(1,...,1)$ and applying the Poisson summation formula, we obtain

\begin{eqnarray*}\label{uni-ineq}
\int_{\R^d} F(x)dx = \alpha^{-1}\sum_{k\in\Z^d}F(k/a+u/2a)&\leq & \alpha^{-1}\sum_{k\in\Z^d} G_\lambda(k/a+u/2a) \\ &=& \prod_{j=1}^d\lambda_j^{-\hf}\Theta(\hf;ia_j^2/\lambda_j).
\end{eqnarray*}
\smallskip

\noindent where the last equality is given by (\ref{Ptheta}).
This proves (\ref{min-bound}). By construction, $L_{\lambda,a}(z)$ is an entire minorant of exponential type with respect to $Q(a)$. Using definitions (\ref{min-def}) and (\ref{min-ext-def}) we conclude that

\begin{equation}\label{L-int}
\int_{\R^d}L_{\lambda,a}(x)dx = \bigg\{ \sum_{j=1}^d\frac{\Theta(\hf;ia_j^2/\lambda_j)}{\Theta(0;ia_j^2/\lambda_j)}-(d-1)\bigg\} \prod_{j=1}^d\Theta(0;ia_j^2/\lambda_j)\lambda_j^{-\hf}.
\end{equation}
\smallskip

\noindent Thus, to deduce (\ref{min-asy}), we only need to prove that

\begin{equation}\label{vital-ineq}
(1+5de^{-\pi\gamma})\bigg\{ \sum_{j=1}^d\frac{\Theta(\hf;ia_j^2/\lambda_j)}{\Theta(0;ia_j^2/\lambda_j)}-(d-1)\bigg\} \geq \prod_{j=1}^d\frac{\Theta(\hf;ia_j^2/\lambda_j)}{\Theta(0;ia_j^2/\lambda_j)}.
\end{equation}
\smallskip

\noindent for large $\gamma$ (recall that $\gamma=\min\{\lambda_j/a_j^2\}$). If we let $q_j=e^{-\pi a_j^2/\lambda_j}$ and $\gamma$ sufficient large such that $(1-e^{-\pi\gamma})^2>4/5$, we can use Lemma \ref{theta-lemma} to obtain

\begin{equation}\label{ineq1}
\sum_{j=1}^d\frac{\Theta(\hf;ia_j^2/\lambda_j)}{\Theta(0;ia_j^2/\lambda_j)}-(d-1) \geq 1 - 4\sum_{j=1}^d q_j/(1-q_j)^2 \geq 1 - 5\sum_{j=1}^d q_j.
\end{equation}
\smallskip

\noindent Applying Lemma \ref{theta-lemma} for a sufficient large $\gamma$ we obtain

\begin{equation}\label{ineq2}
\prod_{j=1}^d\frac{\Theta(\hf;ia_j^2/\lambda_j)}{\Theta(0;ia_j^2/\lambda_j)}\leq \exp\bigg\{ -2\sum_{j=1}^dq_j\bigg\} \leq 1-\sum_{j=1}^dq_j.
\end{equation}
\smallskip

\noindent where the last inequality holds, say, if $\sum_{j=1}^dq_j<\log 2$ .
Write $\beta=\sum_{j=1}^dq_j$ and note that 
$$
1-\beta \leq (1-5\beta)(1+5\beta)
$$
if $\beta$ is sufficiently small, say, if $\beta\in[0,1/25)$. If $\gamma$ is sufficiently large such that $\sum_{j=1}^dq_j<1/25$ we obtain

\begin{equation}\label{ineq3}
1-\sum_{j=1}^dq_j \leq \big(1-5\sum_{j=1}^dq_j\big)\big(1+5\sum_{j=1}^dq_j\big) < \big(1-5\sum_{j=1}^dq_j\big)(1+5de^{-\pi\gamma}).
\end{equation}
\smallskip

\noindent By (\ref{ineq1}), (\ref{ineq2}) and (\ref{ineq3}) we conclude that there exists an $\gamma_0=\gamma_{0}(d)>0$ such that if $\gamma\geq \gamma_0$, then (\ref{vital-ineq}) holds.

\end{proof}

\begin{proof}[Proof of Lemma \ref{theta-lemma}]
Recall that $e(v)=e^{2\pi iv}$ and $q=e^{\pi i\tau}$. By Theorem 1.3 of chapter 10 of \cite{SS} the Theta function has the following product representation

\begin{equation}\label{theta-prod-rep}
\Theta(v;\tau)=\prod_{n=1}^{\infty}\big(1-q^{2n}\big)\big(1+q^{2n-1}e(v)\big)\big(1+q^{2n-1}e(-v)\big).
\end{equation}
\smallskip

\noindent It follows from (\ref{theta-prod-rep}) that

$$
\frac{\Theta(\hf;it)}{\Theta(0;it)}=\prod_{n=1}^{\infty}\bigg(\frac{1-q^{2n-1}}{1+q^{2n-1}}\bigg)^2 = \exp\bigg\{ 2\sum_{n=0}^{\infty}\log\bigg(1-\frac{2q^{2n+1}}{1+q^{2n+1}}\bigg)\bigg\}.
$$
\smallskip

\noindent Using the inequality $\log(1-x)\geq -x/(1-x) $ for all $x\in[0,1)$ we obtain

\begin{eqnarray*}
\frac{\Theta(\hf;it)}{\Theta(0;it)}\geq \exp\bigg\{-4\sum_{n=0}^{\infty}\frac{q^{2n+1}}{1-q^{2n+1}}\bigg\}&\geq &\exp\bigg\{-\frac{4}{1-q}\sum_{n=0}^{\infty}q^{2n+1}\bigg\}\\ &=&\exp\bigg\{-\frac{4q}{(1-q)(1-q^2)}\bigg\} \\ &>& e^{-4q/(1-q)^2} \\ &>& 1-4q/(1-q)^2
\end{eqnarray*}
\smallskip

\noindent and this proves the left hand side (LHS) inequality in (\ref{theta-ineq}). The (RHS) inequality in (\ref{theta-ineq}) is deduced by a similar argument using the inequality $\log(1-x)\leq -x $ for all $x\in[0,1)$.

\end{proof}


\section{Gaussian Subordination Method}\label{Gaussian Subordination Method}
In this section we adapt the distribution method developed in \cite{CLV} to the several variables setting, and we apply the Gaussian subordination method to the majorant and minorant for the multidimensional Gaussian defined in the previous section. This method allow us to extend the class of function $g(x)$ for which we can solve the corresponding Beurling--Selberg extremal problems. The central idea is to integrate the functions $G_\lambda(x)$, $L_{\lambda,a}(z)$ and $M_{\lambda,a}(z)$, in a distributional sense, with respect to a non--negative measure $\nu$ defined on $\Lambda=(0,\infty)^d$.

We begin with a generalization of the distribution method developed in \cite{CLV} for existence of majorants and minorants, we will deal with extremality later.

\begin{theorem}[Distribution Method -- (Existence)]\label{existence}
Let $K\subset \R^d$ be a compact convex set, $\Lambda$ be a measurable space of parameters, and for each ${\lambda}\in \Lambda$ let $G({x};{\lambda})\in L^1(\R^d)$ be a real-valued function. For each $\lambda$ let $F({z};{\lambda})$ be a entire function defined for $z\in\C^d$ of exponential type with respect to $K$. Let $\nu$ be a non--negative measure on $\Lambda$ that satisfies
 					\begin{equation}\label{intValue}
							\int_{\Lambda}\int_{\R^d} | F({x};{\lambda})-G({x};{\lambda}) | \d{x} \d\nu({\lambda})   <\infty.
					\end{equation}
					and
					\begin{equation}\label{Fourier-cond-nu}
					\int_{\Lambda}\int_{\R^d}|\ft{G}({x};{\lambda})\varphi({x})| \d{x} \d\nu({\lambda})<\infty
					\end{equation}
					for all $\varphi\in{\mc S}(\R^d)$ supported in $K^{c}$.
					
Let $\G\in{\mc S}^{\prime}(\R^d)$ be a real-valued continuous function and
					\begin{equation}\label{gFourier1}
							\ft{\G}(\varphi)=\int_{\R^d}  \int_{\Lambda}\ft{G}({x};{\lambda})d\nu({\lambda}) \varphi({x})d{x}
					\end{equation}
				for all $\varphi\in{\mc S}(\R^d)$ supported in $K^{c}$.
				
			\begin{enumerate}
				\item[(i)] If  $  G({x};{\lambda})\leq F({x};{\lambda})$ for each ${x}\in \R^d$ and ${\lambda}\in \Lambda$, then there exists a real entire majorant $ \M({x})$ for $\G({x})$ of exponential type with respect to $K$ and 
				\[
					\int_{\R^d} \{ \M({x})-\G({x})\} d{x}
				\] 
				 is equal to the quantity at (\ref{intValue}).
				\item[(ii)] If  $  F({x};{\lambda}) \leq G({x};{\lambda})$ for each ${x}\in \R^d$ and ${\lambda}\in \Lambda$, then there exists a real entire minorant $ \LL({x})$ for $\G({x})$ of exponential type with respect to $K$ and 
				\[
					\int_{\R^d}\{ \G({x})-\LL({x}) \}d{x}
				\] 
				is equal to the quantity at (\ref{intValue}).
			\end{enumerate} 
 		\end{theorem}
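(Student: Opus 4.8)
The plan is to produce the majorant not as the naive object $\int_{\Lambda}F(z;\lambda)\,\d\nu(\lambda)$ --- which need not make sense, since $F(\cdot;\lambda)$ is not assumed $\nu$-integrable --- but as $\M:=\G+H$, where
$$H(x):=\int_{\Lambda}\bigl(F(x;\lambda)-G(x;\lambda)\bigr)\,\d\nu(\lambda),$$
and then to check the four required properties of $\M$. First I would note that, since $G(\cdot;\lambda)\leq F(\cdot;\lambda)$ and each $G(\cdot;\lambda)$ lies in $L^{1}(\R^d)$, Tonelli's theorem together with (\ref{intValue}) shows that $H$ is a well-defined non-negative function belonging to $L^{1}(\R^d)$, with $\int_{\R^d}H(x)\,\d x$ equal to the quantity in (\ref{intValue}); in particular $F(\cdot;\lambda)\in L^{1}(\R^d)$ for $\nu$-almost every $\lambda$. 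Consequently $\M=\G+H$ is a tempered distribution, being the sum of $\G\in\mc{S}'(\R^d)$ and an $L^{1}$ function, and it is real-valued because $\G$ is real-valued and $H$ is an integral of the real-valued function $F-G\geq0$ against the non-negative measure $\nu$.

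The main point is to show that the Fourier transform of $\M$ is supported in $K$. By linearity $\widehat{\M}=\widehat{\G}+\widehat{H}$ in $\mc{S}'(\R^d)$. Since $H\in L^{1}(\R^d)$, its Fourier transform is the bounded continuous function
$$\widehat{H}(\xi)=\int_{\Lambda}\bigl(\widehat{F}(\xi;\lambda)-\widehat{G}(\xi;\lambda)\bigr)\,\d\nu(\lambda),$$
where the interchange of $\int_{\Lambda}$ and $\int_{\R^d}$ is licensed for each fixed $\xi$ by (\ref{intValue}). For $\nu$-almost every $\lambda$ the function $F(\cdot;\lambda)$ is an integrable entire function of exponential type with respect to $K$, so by the converse direction of the Paley--Wiener--Schwartz theorem (Theorem \ref{thm-PWS}) the support of $\widehat{F}(\cdot;\lambda)$ is contained in $K$; hence $\widehat{H}(\xi)=-\int_{\Lambda}\widehat{G}(\xi;\lambda)\,\d\nu(\lambda)$ for almost every $\xi\notin K$. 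On the other hand, hypothesis (\ref{gFourier1}) says precisely that, tested against any $\varphi\in\mc{S}(\R^d)$ supported in $K^{c}$, the distribution $\widehat{\G}$ coincides with the function $\xi\mapsto\int_{\Lambda}\widehat{G}(\xi;\lambda)\,\d\nu(\lambda)$, which is integrable on the support of such $\varphi$ by (\ref{Fourier-cond-nu}). Adding the two expressions, $\widehat{\M}$ vanishes on the open set $K^{c}$, so its support is contained in $K$.

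Since $K$ is compact and convex, the direct statement of Theorem \ref{thm-PWS} now gives that $\M$ agrees on $\R^d$ with an entire function on $\C^{d}$ obeying the bound of that theorem, i.e. an entire function of exponential type with respect to $K$; we denote this function again by $\M$. Then $\M-\G$ is continuous on $\R^d$ and equals $H$ almost everywhere; since $H\geq0$ almost everywhere, continuity forces $\M(x)-\G(x)\geq0$ for every $x\in\R^d$, so $\M$ is a majorant of $\G$. Finally $\M-\G=H\in L^{1}(\R^d)$, whence
$$\int_{\R^d}\bigl\{\M(x)-\G(x)\bigr\}\,\d x=\int_{\R^d}H(x)\,\d x,$$
which we already identified with the quantity in (\ref{intValue}). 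This proves part (i); part (ii) follows by the identical argument with $\M$ replaced by $\LL:=\G-\int_{\Lambda}\bigl(G(\cdot;\lambda)-F(\cdot;\lambda)\bigr)\,\d\nu(\lambda)$, using now that $G(\cdot;\lambda)-F(\cdot;\lambda)\geq0$.

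The substantive difficulties are all measure-theoretic: verifying that $H$ genuinely lies in $L^{1}(\R^d)$ and that $F(\cdot;\lambda)$ is $\nu$-a.e.\ integrable; justifying the two Fubini interchanges that move the Fourier transform past $\int_{\Lambda}(\cdot)\,\d\nu(\lambda)$; and reconciling the distributional identity (\ref{gFourier1}) for $\widehat{\G}$ on $K^{c}$ with the pointwise-a.e.\ formula for $\widehat{H}$ obtained there, so that the two cancel. I expect this last reconciliation --- keeping straight exactly what (\ref{intValue}), (\ref{Fourier-cond-nu}) and (\ref{gFourier1}) each provide, and in which function space --- to be the main thing to get right; once it is in place, the Paley--Wiener--Schwartz theorem does the rest essentially mechanically.
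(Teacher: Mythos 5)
Your proposal is correct and follows essentially the same route as the paper: the function you call $H$ is the paper's $\mathcal{D}(x)=\int_{\Lambda}\{F(x;\lambda)-G(x;\lambda)\}\,\d\nu(\lambda)$, the decomposition $\M=\G+H$, the cancellation of $\widehat{H}$ against $\widehat{\G}$ on $K^{c}$ via (\ref{gFourier1}) and (\ref{Fourier-cond-nu}), the appeal to Theorem \ref{thm-PWS}, and the continuity argument for pointwise majorization are all exactly the paper's steps. No substantive differences.
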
 

\medskip
			
\noindent With the exception of Theorem \ref{existence}, $\Lambda$ will always stand for $(0,\infty)^d$. For a given $a\in\Lambda$ define $\GG^d_+(a)$ as the set of ordered pairs $(\G,\nu)$ where $\G:\R^d\to\R$ is a continuous function and $\nu$ is a non-negative Borel measure in $\Lambda$ such that
\smallskip

\begin{description}
	\item[(C1) ] $\G\in \mc S'(\R^d)$ is a tempered distribution.
	\item[(C2)]For all $\varphi\in{\Sw}(\R^d)$ supported in $Q(a)^c$ we have
$$
\int_{\R^d}\int_{\Lambda}|\ft{G}_\lambda({x})\varphi({x})|d\nu({\lambda}) d{x} <\infty.
$$
	\item[(C3)] For all $\varphi\in{\Sw}(\R^d)$ supported in $Q(a)^c$ we have

\begin{equation}\label{G-Fourier}
\int_{\R^d}\G(x)\ft{\varphi}(x)=\int_{\R^d}\int_{\Lambda}\ft{G}_\lambda(x)\d\nu(\lambda)\varphi(x)dx.
\end{equation}
	\item[(C4+)] The following integrability condition holds

\begin{equation}\label{maj-nu-cond}
						\int_{\Lambda} \prod_{k=1}^{d}\lambda_{k}^{-\hf}\bigg\{ \prod_{j=1}^{d}\Theta(0;ia_j^2/\lambda_j) - 1\bigg\}    d\nu({\lambda}) <\infty.
					\end{equation}
\end{description}
\noindent In a analogous way, we define the class $\GG^d_-(a)$ replacing condition {\bf (C4+)} by

\begin{description}

	\item[(C4-)] The following integrability condition holds

\begin{equation}\label{min-nu-cond}
 					 \int_\Lambda \bigg\{1- \bigg( \sum_{j=1}^d\frac{\Theta(\hf;ia_j^2/\lambda_j)}{\Theta(0;ia_j^2/\lambda_j)}-(d-1)\bigg) \prod_{j=1}^d\Theta(0;ia_j^2/\lambda_j)\bigg\}\prod_{j=1}^d\lambda_j^{-\hf}\d\nu(\lambda)
<\infty.
 					\end{equation}
\end{description}

\noindent Theorem \ref{gauss-sub-maj} offers a optimal resolution of the Majorization Problem for the class functions $\GG^d_+(a)$ and the Theorem \ref{gauss-sub-min} offers asymptotically optimal resolution of the Minorization Problem for the class of functions $\GG^d_-(a)$.
\medskip

		\begin{theorem}[Gaussian Subordination -- Majorant]\label{gauss-sub-maj}
		For a given $a\in\Lambda$, let $(\G,\nu)\in\GG^d_+(a)$. Then there exists an extremal majorant $\M_a({ z})$ of exponential type with respect to $Q(a)$ for $\G({ x})$. Furthermore, $\M_a({ x})$ interpolates $\G({ x})$ on $\Z^d/a$ and satisfies
		
		\begin{equation}\label{maj-ext-ans}
		\int_{\R^d}\M(x)-\G(x)\dx=\int_{\Lambda} \prod_{k=1}^{d}\lambda_{k}^{-\hf}\bigg\{ \prod_{j=1}^{d}\Theta(0;ia_j^2/\lambda_j) - 1\bigg\}    d\nu({\lambda}).
		\end{equation}

 		\end{theorem}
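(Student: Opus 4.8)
The plan is to run the distribution method of Theorem~\ref{existence} with the specific family from \S\ref{M.G.F} and then to upgrade ``existence'' to ``extremality'' by a Poisson--summation quadrature. Write $\alpha=a_1\cdots a_d$. First I would invoke Theorem~\ref{existence}(i) with $K=Q(a)$, with $G({x};{\lambda})=G_\lambda({x})$ and $F({z};{\lambda})=M_{\lambda,a}({z})$, and with the given measure $\nu$. The hypotheses are precisely what membership in $\GG^d_+(a)$ encodes: \textbf{(C1)} says $\G$ is a real-valued continuous tempered distribution; \textbf{(C2)} is (\ref{Fourier-cond-nu}); \textbf{(C3)} is (\ref{gFourier1}) (with $\ft{\G}(\varphi)=\int_{\R^d}\G(x)\ft{\varphi}(x)\dx$); each $M_{\lambda,a}$ is entire of exponential type with respect to $Q(a)$ by \S\ref{M.G.F}; and since $M_{\lambda,a}(x)\ge G_\lambda(x)$ pointwise, the inner integral in (\ref{intValue}) equals $\int_{\R^d}M_{\lambda,a}(x)\dx-\int_{\R^d}G_\lambda(x)\dx=\prod_{j=1}^d\lambda_j^{-\hf}\Theta(0;ia_j^2/\lambda_j)-\prod_{j=1}^d\lambda_j^{-\hf}$, by the computation in the proof of Theorem~\ref{maj-theo} and the Gaussian integral, so that (\ref{intValue}) is finite by \textbf{(C4+)} and equals the right-hand side of (\ref{maj-ext-ans}). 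Theorem~\ref{existence}(i) then produces a real entire majorant $\M_a({z})$ for $\G$ of exponential type with respect to $Q(a)$, which by construction in that theorem is $\M_a({z})=\int_\Lambda M_{\lambda,a}({z})\dnu(\lambda)$, with $\int_{\R^d}\{\M_a(x)-\G(x)\}\dx$ equal to that quantity; this is identity (\ref{maj-ext-ans}).

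Next I would establish the interpolation $\M_a(k/a)=\G(k/a)$ for every $k\in\Z^d$. From (\ref{maj-interp}) we have $M_{\lambda,a}(k/a)=G_\lambda(k/a)$ for all $k$, so integrating against $\nu$ gives $\M_a(k/a)=\int_\Lambda G_\lambda(k/a)\dnu(\lambda)$, and it remains to identify this with $\G(k/a)$. Here I would follow \cite{CLV}: conditions \textbf{(C1)}--\textbf{(C3)} pin down $\ft{\G}$ on $Q(a)^c$, forcing it to coincide there with $\int_\Lambda\ft{G}_\lambda(\cdot)\dnu(\lambda)$; combining this with the continuity of $\G$, with $\M_a-\G\ge 0$ lying in $L^1(\R^d)$, and with $\M_a$ being band-limited to $Q(a)$, one recovers the pointwise identity on the lattice $\Z^d/a$ by the same argument as in one dimension (the product structure of $M_{\lambda,a}$ is irrelevant in this step). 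I expect this lattice identity to be the main obstacle; the other potentially delicate point — that $\int_\Lambda M_{\lambda,a}({z})\dnu(\lambda)$ converges locally uniformly on $\C^d$ to an entire function — is already absorbed into Theorem~\ref{existence}, so everything else is soft.

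Finally, extremality. Let $F:\C^d\to\C$ be any entire majorant of $\G$ of exponential type with respect to $Q(a)$. If $\int_{\R^d}\{F(x)-\G(x)\}\dx=+\infty$ there is nothing to prove, so we may assume $F-\G\in L^1(\R^d)$. Then $F-\M_a=(F-\G)-(\M_a-\G)$ lies in $L^1(\R^d)$ and is entire of exponential type with respect to $Q(a)$ (a difference of two such functions); by Theorem~\ref{thm-PWS} its Fourier transform is supported in $Q(a)$, so the quadrature formula used in the proof of Theorem~\ref{maj-theo} (Poisson summation over the lattice $\Z^d/a$) applies to it:
\[
\int_{\R^d}\{F(x)-\M_a(x)\}\dx=\alpha^{-1}\sum_{k\in\Z^d}\{F(k/a)-\M_a(k/a)\}=\alpha^{-1}\sum_{k\in\Z^d}\{F(k/a)-\G(k/a)\}\ge 0,
\]
using $F\ge\G$ on $\R^d$ together with the interpolation $\M_a(k/a)=\G(k/a)$ from the previous step. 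Rearranging, $\int_{\R^d}\{F(x)-\G(x)\}\dx\ge\int_{\R^d}\{\M_a(x)-\G(x)\}\dx$, which by the first step equals the right-hand side of (\ref{maj-ext-ans}); hence $\M_a$ is an extremal majorant, which completes the argument.
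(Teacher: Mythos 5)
Your overall architecture matches the paper's: apply Theorem~\ref{existence}(i) with $K=Q(a)$ and $F(z;\lambda)=M_{\lambda,a}(z)$, compute the quantity (\ref{intValue}) from (\ref{gauss-maj-ans}) so that it equals the right-hand side of (\ref{maj-ext-ans}), and deduce extremality from the interpolation property via Poisson summation over $\Z^d/a$. Those parts are correct. The gap sits exactly at the step you yourself flag as ``the main obstacle'' and then do not carry out: the interpolation $\M_a(k/a)=\G(k/a)$.

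There are two problems there. First, the identity $\M_a(z)=\int_\Lambda M_{\lambda,a}(z)\,\dnu(\lambda)$ is not what Theorem~\ref{existence} provides and is false in general: the construction gives $\M_a=\G+\D$ with $\D(x)=\int_\Lambda\{M_{\lambda,a}(x)-G_\lambda(x)\}\,\dnu(\lambda)$, and only this \emph{difference} is $\nu$-integrable. For $\G_\sigma(x)=|x|^\sigma$ with the measure of Example~\ref{G-mod-pow} one has $\int_\Lambda\prod_j\lambda_j^{-\hf}\,\d\nu_\sigma(\lambda)=\infty$, so $\int_\Lambda M_{\lambda,a}(k/a)\,\d\nu_\sigma$ and $\int_\Lambda G_\lambda(k/a)\,\d\nu_\sigma$ both diverge and ``integrating (\ref{maj-interp}) against $\nu$'' has no meaning. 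Second, even when one works with the difference, $\D$ is defined by Fubini only for almost every $x$ and agrees with the continuous function $\M_a-\G$ only almost everywhere; the lattice $\Z^d/a$ has measure zero, so you cannot evaluate the integral formula at $k/a$ and transfer the value $0$ to the continuous representative. (An a.e.-defined integral of nonnegative continuous functions is lower semicontinuous by Fatou, which is the wrong direction: it can vanish at a point while its continuous representative is strictly positive there.) This is precisely why the paper's proof periodizes $\D$, convolves with the multidimensional Fej\'er kernel, uses condition \textbf{(C3)} to rewrite $\ft{\D}$ off $Q(a)$ as $-\int_\Lambda\ft{G}_\lambda\,\dnu$, applies Fatou's lemma twice, and identifies $\ft{\D}(0)$ with $\int_\Lambda\sum_{n\neq0}\ft{G}_\lambda(an)\,\dnu(\lambda)$ via Poisson summation for $M_{\lambda,a}-G_\lambda$ together with (\ref{maj-interp}); only then does Fej\'er's theorem at the continuity points give $\sum_n\D(n/a)\leq 0$, hence $\D(n/a)=0$. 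Your sketch names the right ingredients but supplies none of this mechanism, and since your extremality argument rests entirely on the interpolation identity, the proof as written is incomplete.
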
 
\smallskip

\begin{theorem}[Gaussian Subordination -- Minorant]\label{gauss-sub-min}
For a given $a\in\Lambda$, let $(\G,\nu)\in\GG_-^d(a)$. Then, if $\F(z)$ is a real entire minorant of $\G(x)$ of exponential type with respect to $Q(a)$, we have

				\begin{equation}\label{G-min-bound}
				\int_{\R^d}  \G({ x})-\F(x)  d{ x}\geq \int_{\Lambda} \prod_{j=1}^{d}\lambda_{k}^{-\hf}\bigg\{ 1-\prod_{j=1}^{d}\Theta(\hf;ia_j^2/\lambda_j)\bigg\}    d\nu({\lambda}).
				\end{equation}
				
\noindent Furthermore, there exists a family of minorants $\{\LL_a({ z}):a\in\Lambda\}$ where $\LL_a(z)$ is of exponential type with respect to $Q(a)$ such that

$$
\int_{\R^d}  \G(x)-\LL_{a}({ x})   \d{ x}
$$

\noindent is equal to the (LHS) of (\ref{min-nu-cond}). Also

\begin{equation}\label{L1-min-conv}
\lim_{a\uparrow\infty}\int_{\R^d}  \G(x)-\LL_{a}({ x})   \d{ x}=0,
\end{equation}
\smallskip
					
					\noindent where $a\uparrow\infty$ means $a_j\uparrow\infty$ for each $j$.
\end{theorem}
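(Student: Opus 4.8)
The plan is to reduce the multidimensional statements of Theorem \ref{theta-lemma} [sic: I mean Theorem \ref{gauss-sub-maj} and Theorem \ref{gauss-sub-min}] to the Gaussian case already treated in \S\ref{M.G.F}, by integrating the pointwise inequalities $L_{\lambda,a}(x)\le G_\lambda(x)\le M_{\lambda,a}(x)$ against $\nu$. Concretely, I would first observe that Theorem \ref{gauss-sub-maj} is the special case of Theorem \ref{existence}(i) with $\Lambda=(0,\infty)^d$, $K=Q(a)$, $G(x;\lambda)=G_\lambda(x)$, and $F(z;\lambda)=M_{\lambda,a}(z)$: conditions (C1)--(C3) are exactly the hypotheses (\ref{gFourier1})--(\ref{Fourier-cond-nu}) of that theorem (with $\ft{\G}$ the distributional Fourier transform appearing in (C3)), while the finiteness hypothesis (\ref{intValue}) is precisely (C4+), since by Theorem \ref{maj-theo} and (\ref{maj-def}), (\ref{maj-ext-def}) one has $\int_{\R^d}\{M_{\lambda,a}(x)-G_\lambda(x)\}\dx=\prod_k\lambda_k^{-\hf}\{\prod_j\Theta(0;ia_j^2/\lambda_j)-1\}$ (the $M$-integral was computed in the proof of Theorem \ref{maj-theo}, and $\int G_\lambda=\prod_j\lambda_j^{-\hf}$). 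Thus Theorem \ref{existence}(i) produces a real entire majorant $\M_a$ of exponential type with respect to $Q(a)$ with $\int\{\M_a-\G\}\dx$ equal to the right side of (\ref{maj-ext-ans}).

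Next I would establish that $\M_a$ is \emph{extremal} and interpolates $\G$ on $\Z^d/a$. For interpolation: from (\ref{maj-interp}), $M_{\lambda,a}(k/a)=G_\lambda(k/a)$ for every $k\in\Z^d$, so integrating $M_{\lambda,a}(k/a)-G_\lambda(k/a)=0$ against $\nu$ (using dominated convergence / the integrability in (C4+) together with continuity of $\G$) gives $\M_a(k/a)=\G(k/a)$. For extremality: if $F$ is any integrable entire majorant of $\G$ of exponential type with respect to $Q(a)$, then by Poisson summation (justified by the Paley--Wiener--Schwartz growth bound of Theorem \ref{thm-PWS}, exactly as in the proof of Theorem \ref{maj-theo})
\[
\int_{\R^d}F(x)\dx=\alpha^{-1}\sum_{k\in\Z^d}F(k/a)\ge \alpha^{-1}\sum_{k\in\Z^d}\G(k/a)=\int_{\R^d}\M_a(x)\dx,
\]
where $\alpha=a_1\cdots a_d$ and the last equality holds because $\M_a$ interpolates $\G$ on $\Z^d/a$ and itself satisfies Poisson summation (it is band-limited to $Q(a)$ with the appropriate growth bound). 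Here one must check that $\sum_k\G(k/a)$ converges; this follows from the growth/decay implicit in $(\G,\nu)\in\GG^d_+(a)$, or more cleanly one compares directly with $\M_a$ and $L$-type minorants. This yields (\ref{maj-ext-ans}) and extremality.

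For Theorem \ref{gauss-sub-min} the structure is the same using Theorem \ref{existence}(ii) with $F(z;\lambda)=L_{\lambda,a}(z)$: condition (\ref{intValue}) becomes, via (\ref{L-int}) and $\int G_\lambda=\prod_j\lambda_j^{-\hf}$, exactly the finiteness in (C4-), so one obtains a minorant $\LL_a$ with $\int\{\G-\LL_a\}\dx$ equal to the left side of (\ref{min-nu-cond}). The lower bound (\ref{G-min-bound}) for an arbitrary integrable entire minorant $\F$ of exponential type with respect to $Q(a)$ is obtained by Poisson summation at the shifted lattice $\Z^d/a+u/2a$, exactly as in the proof of Theorem \ref{min-theo}: $\int\F\dx=\alpha^{-1}\sum_k\F(k/a+u/2a)\le\alpha^{-1}\sum_k\G(k/a+u/2a)$, and integrating the one-variable identities (\ref{gauss-min-ans})--(\ref{Ptheta}) against $\nu$ identifies $\alpha^{-1}\sum_k\G(k/a+u/2a)$ with $\int_{\R^d}\G(x)\dx-\int_\Lambda\prod_k\lambda_k^{-\hf}\{1-\prod_j\Theta(\hf;ia_j^2/\lambda_j)\}\dnu(\lambda)$, which rearranges to (\ref{G-min-bound}). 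Finally, for the convergence (\ref{L1-min-conv}) as $a\uparrow\infty$: the integrand of (\ref{min-nu-cond}), namely $\{1-(\sum_j\Theta(\hf;ia_j^2/\lambda_j)/\Theta(0;ia_j^2/\lambda_j)-(d-1))\prod_j\Theta(0;ia_j^2/\lambda_j)\}\prod_j\lambda_j^{-\hf}$, tends to $0$ pointwise in $\lambda$ as $a\uparrow\infty$ (since each $\Theta(0;ia_j^2/\lambda_j)\to1$ and each ratio $\Theta(\hf;\cdot)/\Theta(0;\cdot)\to1$ by Lemma \ref{theta-lemma}), and it is dominated, \emph{for $a$ beyond a fixed threshold}, by the analogous expression evaluated at that threshold, which is $\nu$-integrable by (C4-); the dominated convergence theorem then gives (\ref{L1-min-conv}).

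The main obstacle I anticipate is not the reduction itself but the bookkeeping needed to interchange the integration over $\Lambda$ with (a) Poisson summation and (b) the distributional pairing in (C3): one must verify that $\int_\Lambda G_\lambda(k/a)\dnu(\lambda)=\G(k/a)$ and that the Fourier-transform side behaves correctly, which is where conditions (C2) and (C3) (Fubini for the Schwartz pairing) and the finiteness of (C4$\pm$) (to control the tails of the $\lambda$-integral uniformly in the lattice variable) all get used. Establishing the domination needed for (\ref{L1-min-conv}) also requires a monotonicity check: that the integrand of (\ref{min-nu-cond}) is, for each fixed $\lambda$, eventually monotone (or at least uniformly bounded) as $a$ increases, which one gets from the product/series structure and the bounds in Lemma \ref{theta-lemma}.
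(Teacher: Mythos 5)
Your treatment of the \emph{existence} half is essentially the paper's: you invoke Theorem \ref{existence}(ii) with $K=Q(a)$ and $F(z;\lambda)=L_{\lambda,a}(z)$, identify the quantity (\ref{intValue}) with the left side of (\ref{min-nu-cond}) via (\ref{L-int}) and $\int G_\lambda=\prod_j\lambda_j^{-\hf}$, and prove (\ref{L1-min-conv}) by pointwise convergence of the integrand plus domination by its value at a fixed threshold. The monotonicity check you defer is precisely the paper's Lemma \ref{lemma-tech-2}, proved by differentiating $\phi_a$ in each $a_k$ and using that $t\mapsto\Theta(0;it^2/\lambda_j)$ decreases to $1$ while $t\mapsto\Theta(\hf;it^2/\lambda_j)$ increases to $1$; with that granted, this part is fine.

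The genuine gap is in your proof of the lower bound (\ref{G-min-bound}). You apply Poisson summation on the shifted lattice and then identify $\alpha^{-1}\sum_k\G(k/a+u/2a)$ with $\int_{\R^d}\G(x)\dx$ minus the right-hand side of (\ref{G-min-bound}), rearranging to conclude. But membership in $\GG^d_-(a)$ does \emph{not} put $\G$ in $L^1(\R^d)$: conditions (C1)--(C3) only make $\G$ a continuous tempered distribution whose Fourier transform agrees with $\int_\Lambda\ft{G}_\lambda\,d\nu$ against test functions supported in $Q(a)^c$, and the motivating examples (e.g.\ $\G_\sigma(x)=|x|^\sigma$ of Example \ref{G-mod-pow}) are unbounded, so both $\sum_k\G(k/a+u/2a)$ and $\int_{\R^d}\G(x)\dx$ diverge and your rearrangement is an $\infty-\infty$ cancellation. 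Even when $\G\in L^1$, pointwise Poisson summation for $\G$ (which is not band-limited) requires justification, and $\ft\G$ is controlled by $\nu$ only outside $Q(a)$, not at the origin. The paper circumvents all of this by working with the integrable difference $\D=\G-\F$: it periodizes $\D$, convolves with the multidimensional Fej\'er kernel, evaluates at $u/2$, uses (C3) to replace $\ft{\D}(an)$ for $n\neq 0$ by $-\int_\Lambda\ft{G}_\lambda(an)\,d\nu(\lambda)$, observes that $-\sum_{n\neq 0}\prod_j\bigl(1-|n_j|/(R+1)\bigr)\ft{G}_\lambda(an)(-1)^{u\cdot n}\geq 0$ because $\ft{G}_\lambda$ is a product of even decreasing factors, and then applies Fatou's lemma together with $P*F_R(u/2)\geq 0$. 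Some such regularization of the difference is needed for (\ref{G-min-bound}) to hold over the whole class $\GG^d_-(a)$.
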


\smallskip

\begin{corollary}\label{cor1}
Under the hypothesis of Theorem \ref{gauss-sub-min}, suppose also that exists an $R>0$ such that $\supp(\nu)\subset \Lambda\cap Q(R)$, $\G\in L^1(\R^d)$ and

$$
\int_{\R^d} \G(x)\dx = \int_\Lambda \prod_{j=1}^d\lambda_j^{-\hf}\d\nu(\lambda)<\infty.
$$
\smallskip

\noindent Then, there exist a constant $\alpha_0>0$ such that, if $\alpha:=\min\{a_j\}\geq \alpha_0$ and if $\F(x)$ is a real entire minorant of $\G(x)$ of exponential type with respect to $Q(a)$, then

$$
\int_{\R^d} \F(x)\dx \leq (1+5de^{-\pi\alpha^2/R})\int_{\R^d} \LL_a(x)\dx.
$$

\end{corollary}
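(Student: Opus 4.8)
The plan is to mimic the asymptotic argument in the proof of Theorem \ref{min-theo}, but now carried out at the level of the integrated (subordinated) quantities rather than the Gaussian itself, using the support restriction $\supp(\nu)\subset\Lambda\cap Q(R)$ to get a uniform lower bound on $a_j^2/\lambda_j$. First I would record that Theorem \ref{gauss-sub-min} gives $\int_{\R^d}\G(x)-\LL_a(x)\,\dx$ equal to the LHS of (\ref{min-nu-cond}), while the additional hypothesis $\int_{\R^d}\G(x)\,\dx=\int_\Lambda\prod_j\lambda_j^{-\hf}\,\dnu(\lambda)$ lets me write
\begin{equation*}
\int_{\R^d}\LL_a(x)\,\dx=\int_{\Lambda}\Big(\sum_{j=1}^d\frac{\Theta(\hf;ia_j^2/\lambda_j)}{\Theta(0;ia_j^2/\lambda_j)}-(d-1)\Big)\prod_{j=1}^d\Theta(0;ia_j^2/\lambda_j)\,\lambda_j^{-\hf}\,\dnu(\lambda),
\end{equation*}
i.e. the analogue of (\ref{L-int}) after integrating in $\lambda$. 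Similarly, by the first part of Theorem \ref{gauss-sub-min} any admissible minorant $\F$ satisfies $\int_{\R^d}\F(x)\,\dx\le\int_\Lambda\prod_j\lambda_j^{-\hf}\big(1-\prod_j\Theta(\hf;ia_j^2/\lambda_j)\big)\,\dnu(\lambda)$... wait, more usefully $\int\F\le\int\G-\text{(RHS of \eqref{G-min-bound})}$; but the cleanest route is to bound $\int\F\le\int_\Lambda\big(1-\prod_j\Theta(\hf;ia_j^2/\lambda_j)\big)\prod_j\lambda_j^{-\hf}\,\dnu(\lambda)$ using the hypothesis on $\int\G$, since $\Theta(\hf;\cdot)<\Theta(0;\cdot)=1$-free form... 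I would use the form that makes the pointwise-in-$\lambda$ comparison below transparent.

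The heart of the argument is then a pointwise-in-$\lambda$ inequality: for $\lambda$ in the support of $\nu$ (so $\lambda\le Ru$, hence $a_j^2/\lambda_j\ge a_j^2/R\ge\alpha^2/R=:\gamma$ for every $j$), I claim
\begin{equation*}
(1+5de^{-\pi\gamma})\Big(\sum_{j=1}^d\frac{\Theta(\hf;ia_j^2/\lambda_j)}{\Theta(0;ia_j^2/\lambda_j)}-(d-1)\Big)\prod_{j=1}^d\Theta(0;ia_j^2/\lambda_j)\;\ge\;\prod_{j=1}^d\Theta(\hf;ia_j^2/\lambda_j),
\end{equation*}
provided $\gamma\ge\gamma_0(d)$. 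This is exactly inequality (\ref{vital-ineq}) of the proof of Theorem \ref{min-theo} after multiplying through by $\prod_j\Theta(0;ia_j^2/\lambda_j)>0$ (note $\prod_j\Theta(\hf;ia_j^2/\lambda_j)=\prod_j\frac{\Theta(\hf)}{\Theta(0)}\prod_j\Theta(0)$), and its proof is verbatim the chain (\ref{ineq1})–(\ref{ineq3}) with $q_j=e^{-\pi a_j^2/\lambda_j}\le e^{-\pi\gamma}$, since all three inequalities there only used $q_j\le e^{-\pi\gamma}$ and $\sum q_j$ small. Thus with $\alpha_0:=\sqrt{R\gamma_0(d)}$, for $\alpha\ge\alpha_0$ the displayed inequality holds for $\nu$-a.e.\ $\lambda$.

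Finally I would integrate this pointwise inequality against $\dnu(\lambda)$ over $\Lambda$ (the integrability needed is (C4-), which is part of $(\G,\nu)\in\GG^d_-(a)$, together with the convergence of $\int\G$; the factor $1+5de^{-\pi\gamma}$ is a constant independent of $\lambda$ so it pulls out). The left side becomes $(1+5de^{-\pi\alpha^2/R})\int_{\R^d}\LL_a(x)\,\dx$ and the right side becomes $\int_\Lambda\prod_j\Theta(\hf;ia_j^2/\lambda_j)\prod_j\lambda_j^{-\hf}\,\dnu(\lambda)$, which by the hypothesis $\int\G=\int_\Lambda\prod_j\lambda_j^{-\hf}\dnu$ and by $\int\F\le\int\G-\int_\Lambda\prod_j\lambda_j^{-\hf}(1-\prod_j\Theta(\hf;ia_j^2/\lambda_j))\dnu$ (rearranged from \eqref{G-min-bound}) is at least $\int_{\R^d}\F(x)\,\dx$ for every admissible minorant $\F$. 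Chaining these gives the claim. The main obstacle I anticipate is purely bookkeeping: making sure the hypothesis $\int\G(x)\dx=\int_\Lambda\prod_j\lambda_j^{-\hf}\dnu(\lambda)$ is used in exactly the right place to convert the ``defect'' integrals from Theorem \ref{gauss-sub-min} into clean expressions for $\int\LL_a$ and an upper bound for $\int\F$, and confirming that all the integrals in sight are finite (which is guaranteed by (C4-) plus the support restriction, since on $\supp(\nu)$ one has $\prod_j\Theta(0;ia_j^2/\lambda_j)\le\prod_j\Theta(0;i/R\cdot a_j^2)$-type bounds that are uniformly controlled). No genuinely new estimate beyond Lemma \ref{theta-lemma} is required.
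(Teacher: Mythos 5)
Your proposal is correct and follows essentially the same route as the paper: bound $\int\F$ by $\int_\Lambda\prod_j\Theta(\hf;ia_j^2/\lambda_j)\lambda_j^{-\hf}\,\dd\nu(\lambda)$ via the minorant bound plus the hypothesis on $\int\G$, then apply the pointwise-in-$\lambda$ inequality (\ref{min-asy}) (equivalently (\ref{vital-ineq})) on $\supp(\nu)\subset\Lambda\cap Q(R)$ with $\gamma=\alpha^2/R$, and integrate against $\nu$. Your choice $\alpha_0=\sqrt{R\gamma_0(d)}$ is in fact the correct normalization (the paper writes $\alpha_0=R\gamma_0$, a harmless slip), and re-deriving (\ref{vital-ineq}) from the chain (\ref{ineq1})--(\ref{ineq3}) rather than citing (\ref{min-asy}) is the same content.
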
	

\begin{proof}[Proof of Corollary \ref{cor1}]
By a direct application of Theorem \ref{min-theo} we obtain

$$
\int_{\R^d}  \F(x)  \d{ x}\leq \int_{\Lambda_R} \prod_{j=1}^{d}\Theta(\hf;ia_j^2/\lambda_j)\lambda_j^{-\hf}  \d\nu({\lambda}).
$$
\smallskip

\noindent If we choose $\gamma_0>0$, as in Theorem \ref{min-theo} and define $\alpha_0=R\gamma_0$, we can use inequality (\ref{min-asy}) to conclude that

$$
\prod_{j=1}^{d}\Theta(\hf;ia_j^2/\lambda_j)\lambda_j^{-\hf} \leq (1+5de^{-\pi\alpha^2/R})\int_{\R^d}L_{\lambda,a}(x)\dx.
$$
\smallskip

\noindent for all $\lambda\in\Lambda\cap Q(R)$, if each $a_j\geq \alpha_0$.
If we integrate this last inequality with respect to $\d\nu(\lambda)$ we obtain the desired result.

\end{proof}

\subsection{Proofs of theorems \ref{existence}, \ref{gauss-sub-maj} and \ref{gauss-sub-min}}

\begin{proof}[Proof of theorem \ref{existence}]
		We follow the proof of Theorem 14 from \cite{CLV} proving only the majorant case, since the minorant case is nearly identical. Let 
				\[
					D({x};{\lambda})=F({x};{\lambda})-G({x};{\lambda})\geq 0.
				\]
			By condition (\ref{intValue}) and Fubini's theorem the function
				\[
					\D({x})=\int_{\Lambda}D({x};{\lambda})\d\nu({\lambda})\geq 0,
				\]
				is defined for almost all $x\in\R^d$ and $ \D\in L^{1}(\R^d)$.
			The Fourier transform of ${\D}({ x})$ is a continuous function given by
				\begin{equation}\label{dFourier}  
					\ft{\D}({\xi})=\int_{\Lambda}\ft{D}({\xi};{\lambda})\d\nu({\lambda}),
				\end{equation}
			and, due to (\ref{Fourier-cond-nu}), for almost every $\xi\not\in K$ we have the alternative representation
				\begin{equation}\label{dOut}  
					\ft{\D}({\xi})=-\int_{\Lambda}\ft{G}({\xi};{\lambda})\d\nu({\lambda}).
				\end{equation}
			Let $\M$ be the tempered distribution given by 
				\begin{equation*}
					\M(\varphi)=\int_{\R^d}  \{ \D({x})+\G({x}) \} \varphi({x})d{x}.
				\end{equation*}
			Now for any $\varphi\in{\mc S}(\R^d)$ supported in $K^c$, we have by combining (\ref{gFourier1}) and (\ref{dOut}) 
				\begin{equation}
					\ft{\M}(\varphi)=\ft{\D}(\varphi)+\ft{\G}(\varphi)= 0.
				\end{equation}
			Hence $\ft{\M}$ is supported on $K$, in the distributional sense. By the Theorem \ref{thm-PWS}, it follows that the distribution $\M$ is identified with a analytic function $\M:\C^d\to\C$ of exponential type with respect to $K$ and that 
			\begin{equation}\label{defL}
				\M(\varphi) = \int_{\R^d} \M({x})\varphi({x})d{x}  
			\end{equation}
			for every $\varphi\in{\mc S}(\R^d)$. It then follows from the definition of $ \M$ and (\ref{defL}) that for almost every ${x}\in\R^d$
				\[  
					\M(x)=\D(x)+\G(x),
				\]
			which implies  $  \M({x})\geq \G({x}) $ for all ${x}\in\R^d$ since $\G({x})$ is continuous, and
			
				\[
					\int_{\R^d}   \{\M(x)-\G(x)\}dx=  \int_{{\Lambda}} \int_{\R^d}   \{F({x};{\lambda})- G({x};{\lambda}) \}  d{x}d\nu({\lambda}) <\infty.
				\]
				
\end{proof}

\noindent Now we turn to the proof of Theorem \ref{gauss-sub-maj}.
\begin{proof}[Proof of Theorem \ref{gauss-sub-maj}]	
			We follow the proof of Theorem 14 from \cite{CLV}, skipping some parts but including changes needed for higher dimensions. 

			By conditions (C1),(C2),(C3) and (C4+) we are at the position of applying Theorem \ref{existence} for the functions $M_{\lambda,a}(z)$ defined at the previous section.
Let $\M_a(z)$ be the majorant given by Theorem \ref{existence} part (i), for $K=Q(a)$, $G_\lambda(x)=G(x;\lambda)$ and $F(z;\lambda)=M_{\lambda,a}(z)$. First, we show that $ \M_a({ n/a})=  \G({ n/a})$ for each ${ n}\in\Z^d$ and then we will conclude that $\M_a(z)$ is extremal. Let $  \D(x):=:\M_a(x)-\G(x)$, by Theorem \ref{existence} we know that $\D\in L^{1}(\R^d)$. Define $\alpha=a_1a_2...a_d$ and 
			
				\[ 
					 P({ x})=\frac{1}{\alpha}\sum_{{ n}\in \Z^d}  \D({ (x+n)/a}).
				\]
				\smallskip
				
			\noindent It follows from Fubini's theorem that $ P({ x})$ is defined almost everywhere, is integrable on $\T^d=\R^d/\Z^d$ and $  \ft{P}({ k})=\ft{\D}({ ak})$ for all $k\in\Z^d$. Therefore, we have the following identity
			
				\begin{equation}\label{eq-fejer}  
				  P*F_{R}({ x})=\sum_{\stackrel{n\in\Z^d}{|n_j|\leq R}}  \prod_{j=1}^{d} \bigg( 1-\frac{|n_{j}|}{  R +1} \bigg)  \ft{\D}({a n}) e(  {   x\cdot n}  )
				\end{equation} \smallskip
				
\noindent			for each positive integer $R$ and ${ x}\in\R^d$, where 

				\begin{equation}\label{fejer-ker}
						F_{R}({ x})=\prod_{j=1}^{d}   \frac{1}{R+1}   \bigg( \frac{\sin\pi( R+1)x_{j}}{\sin\pi x_{j}}  \bigg)^{2}=\sum_{\stackrel{n\in\Z^d}{|n_j|\leq R}}  \prod_{j=1}^{d} \bigg( 1-\frac{|n_{j}|}{  R +1} \bigg)  e(  {   x\cdot n}  )
				\end{equation} \smallskip
				
\noindent			is the product of one--dimensional Fej\'er kernels. From (\ref{dOut}) and (\ref{eq-fejer}) we have

				\begin{eqnarray*}
					P*F_{R}(0)
					&=& \ft{\D}(0) -\sum_{\stackrel{n \neq 0}{|n_j|\leq R} }  \prod_{j=1}^{d}\bigg( 1-\frac{|n_{j}|}{R+1} \bigg)   \int_{{\Lambda}} \ft{G}_\lambda({ an})d\nu({\lambda})   \\
					&=&  \ft{\D}(0) -\int_{{\Lambda}}\bigg\{\sum_{\stackrel{n \neq 0}{|n_j|\leq R} } \prod_{j=1}^{d}\bigg( 1-\frac{|n_{j}|}{R+1} \bigg)    \ft{G}_\lambda({a n})\bigg\}d\nu({\lambda}).
				\end{eqnarray*}\smallskip
				
			\noindent Since $P*F_R$ is a non--negative function and the term in the brackets above is positive we may apply Fatou's lemma to obtain
			
				\begin{eqnarray}
					   \ft{\D}(0)
					&\geq & \liminf_{ R\to \infty}    P*F_{R}(0) 
					+ \int_{{\Lambda}}  \liminf_{ R\to \infty} \bigg\{ \sum_{\stackrel{n \neq 0}{|n_j|\leq R} } \prod_{j=1}^{d}
					\bigg( 1-\frac{|n_{i}|}{R+1} \bigg)    \ft{G}_\lambda({an})  \bigg\}d\nu(\lambda)\nonumber \\ 
					&=& \liminf_{ R\to \infty}    P*F_{R}(0)
					+ \int_{\Lambda}  \sum_{n \neq 0}  \ft{G}_\lambda({an})  d\nu({\lambda})  \label{eq-fatou}.
				\end{eqnarray} \smallskip
				
\noindent 			From the Poisson Summation formula and the fact that ${M}_{\lambda,a}(n/a)={G}_\lambda(n/a)$ for all $n\in\Z^d$, we have

				\[
				\ft{M}_{\lambda,a}(0)-\ft{G}_\lambda(0)=\sum_{\stackrel{n\in\Z^d}{n\neq 0}} \ft{G}_\lambda({ an}).
				\]
				
\noindent			By (\ref{dFourier}) we  conclude that second term on the (RHS) of (\ref{eq-fatou}) is equal to $\ft{\D}(0)$. This implies that
				\[
					\liminf_{ R\to \infty}   P*F_{R}(0) \leq 0 \, \, \Longrightarrow \, \, \, \liminf_{ R\to \infty} P*F_{R}(0)=0.
				\]
				
\noindent			Using this fact with the definition of $P( { x})$, we have
				\begin{eqnarray*}
					\liminf_{R\to \infty}  P*F_{R}(0) 
					&=&  \liminf_{ R\to \infty}   \int_{\hf Q} P({- y})F_{R}({ y})d{ y}  \\
					&=& \liminf_{R\to \infty}  \int_{\hf Q}   \sum_{{ n}\in\Z^d/a}  \frac{1}{\alpha} \D((n-y)/a)          F_{R}({ y})d{ y}  \\
					&=&  \liminf_{ R\to \infty}  \sum_{{ n}\in\Z^d/a}   \int_{\hf Q}    \frac{1}{\alpha} \D((n-y)/a)   F_{R}({ y})d{ y}   \\
					&\geq&  \sum_{{ n}\in\Z^d}  \liminf_{ R\to \infty}  \int_{\hf Q}    \frac{1}{\alpha} \D((n-y)/a)   F_{R}({ y})d{ y}  \\
					&=& \sum_{{ n}\in\Z^d}     \frac{1}{\alpha} \D({ n/a}),
				\end{eqnarray*}\smallskip
				
		\noindent	where we have used the positivity of $ \D(x)$ and $ F_{R}(x)$, Fubini's theorem and Fatou's lemma. The last equality is due to F\'ejer's theorem for the continuity of $\D(x)$ at the lattice points $\Z^d/a$ (see section 3.3 of \cite{Gr}). Recalling that $ \D(x)\geq0$ for each $x\in\R^d$, it implies that $ \D({ n/a})=0$ which in turn implies $ \M_a({ n/a})=\G({ n/a})$ for each ${ n}\in\Z^d$. 
			
			To conclude, note that if $\F(z)$ is an real entire majorant of exponential type with respect to $Q(a)$ such that $\F-\G \in L^1(\R^d)$ then $\F-\M_a \in L^1(\R^d)$. Thus, the following Poisson summation formula holds pointwise
$$
\int_{\R^{d}}\{\F(x)-\M_a(x)\}dx=\frac{1}{\alpha}\sum_{n\in\Z^d}\{\F((n+y)/a)-\M_a((n+y)/a)\}
$$
for every $y\in\R^d$.
If we take $y=0$ and use that $\M_a(z)$ interpolates $\G(x)$ at the lattice $\Z^d/a$ we conclude that $\M_a(z)$ is extremal, and this concludes the theorem.

\end{proof}	

\noindent Before we turn to the proof of Theorem \ref{gauss-sub-min}, we need a technical lemma, we present the proof of that later.

\begin{lemma} \label{lemma-tech-2}
The functions
$$
\phi_a:\lambda\in\Lambda\mapsto \bigg\{ \sum_{j=1}^d\frac{\Theta(\hf;ia_j^2/\lambda_j)}{\Theta(0;ia_j^2/\lambda_j)}-(d-1)\bigg\} \prod_{j=1}^d\Theta(0;ia_j^2/\lambda_j)
$$
indexed by $a\in\Lambda$ satisfy
\begin{enumerate}
\item[(i)] $\phi_a(\lambda)\leq \phi_b(\lambda)$ if $a_j\leq b_j$
 for all $j\in\{1,...,d\}$

\item[(ii)] For all $\lambda\in\Lambda$, we have
$$
\lim_{a\uparrow\infty}\phi_a(\lambda)=1
$$
where $a\uparrow\infty$ means that $a_j\uparrow\infty$ for all j.
 \end{enumerate}
\end{lemma}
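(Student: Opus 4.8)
The plan is to prove (i) by a one-variable monotonicity computation and (ii) by a direct limiting argument; both rest on elementary monotonicity properties of Jacobi's theta function.

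For part (i) I would first reduce to a single coordinate. Since $\phi_a(\lambda)$ is invariant under simultaneously permuting the pairs $(a_j,\lambda_j)$, and since the inequality $\phi_a(\lambda)\le\phi_b(\lambda)$ for $a\le b$ follows by telescoping — raising the coordinates of $a$ to those of $b$ one at a time — it is enough to show that, holding $\lambda$ and $a_2,\dots,a_d$ fixed, the map $a_1\mapsto\phi_a(\lambda)$ is non-decreasing. Writing $t_j=a_j^2/\lambda_j>0$, the map $a_1\mapsto t_1$ is strictly increasing, so it suffices to prove $\partial_{t_1}\phi_a\ge 0$. Differentiating
\[
\phi_a=\Bigl(\sum_{j=1}^d\frac{\Theta(\hf;it_j)}{\Theta(0;it_j)}-(d-1)\Bigr)\prod_{j=1}^d\Theta(0;it_j)
\]
(legitimate since the defining series converge locally uniformly in $t>0$ together with their termwise derivatives) and factoring out $\prod_{k\ge 2}\Theta(0;it_k)$ gives
\[
\partial_{t_1}\phi_a=\Bigl(\prod_{k=2}^d\Theta(0;it_k)\Bigr)\Bigl[\partial_{t_1}\Theta(\hf;it_1)+\partial_{t_1}\Theta(0;it_1)\cdot\Bigl(\sum_{j=2}^d\tfrac{\Theta(\hf;it_j)}{\Theta(0;it_j)}-(d-1)\Bigr)\Bigr].
\]
To pin down the sign of the bracket I would use four facts about $t\mapsto\Theta(v;it)$ with $t>0$, $q=e^{-\pi t}$: (a) $\Theta(0;it)=\sum_{n\in\Z}q^{n^2}>0$; (b) $\partial_t\Theta(0;it)=-\pi\sum_{n\in\Z}n^2q^{n^2}<0$, so $\Theta(0;it)$ is strictly decreasing; (c) $\Theta(\hf;it)$ is strictly increasing, hence $\partial_t\Theta(\hf;it)>0$ — read off the product representation (\ref{theta-prod-rep}) via $\frac{d}{dt}\log\Theta(\hf;it)=\sum_{n\ge1}\bigl(\frac{2\pi n q^{2n}}{1-q^{2n}}+\frac{2\pi(2n-1)q^{2n-1}}{1-q^{2n-1}}\bigr)>0$, using $\frac{dq}{dt}=-\pi q$; and (d) $\Theta(\hf;it)<\Theta(0;it)$ because $\Theta(0;it)-\Theta(\hf;it)=2\sum_{n\text{ odd}}q^{n^2}>0$. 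By (d) each ratio $\Theta(\hf;it_j)/\Theta(0;it_j)$ lies in $(0,1)$, so the coefficient $\sum_{j=2}^d\Theta(\hf;it_j)/\Theta(0;it_j)-(d-1)\le 0$; together with (b) this makes the product $\partial_{t_1}\Theta(0;it_1)\cdot(\cdots)$ non-negative, and adding the strictly positive term from (c) makes the bracket positive. With (a), $\partial_{t_1}\phi_a>0$, proving (i).

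For part (ii), as $a\uparrow\infty$ each $t_j=a_j^2/\lambda_j\to\infty$, hence $q_j=e^{-\pi t_j}\to 0$. From $1\le\Theta(0;it_j)=1+2\sum_{n\ge1}q_j^{n^2}\le 1+\frac{2q_j}{1-q_j}$ we get $\Theta(0;it_j)\to 1$, and Lemma \ref{theta-lemma} squeezes $\Theta(\hf;it_j)/\Theta(0;it_j)$ between $1-4q_j/(1-q_j)^2$ and $e^{-2q_j}$, so that ratio also tends to $1$. Therefore $\sum_{j=1}^d\Theta(\hf;it_j)/\Theta(0;it_j)-(d-1)\to d-(d-1)=1$ and $\prod_{j=1}^d\Theta(0;it_j)\to 1$, whence $\phi_a(\lambda)\to 1$.

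The step I expect to be the main obstacle is fixing the sign of $\partial_{t_1}\phi_a$: the factor $\Theta(0;it_1)$ decreases in $t_1$ while $\Theta(\hf;it_1)$ increases, so the two effects appear to compete, and the resolution is that the decreasing factor enters multiplied by the non-positive coefficient $\sum_{j\ge2}\Theta(\hf;it_j)/\Theta(0;it_j)-(d-1)$, so the effects actually cooperate. A secondary subtlety is establishing that $\Theta(\hf;it)$ is increasing, for which the product representation (\ref{theta-prod-rep}) is the right tool — the termwise derivatives of its power series do not have a fixed sign for small $t$.
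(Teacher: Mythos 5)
Your proof is correct and follows essentially the same route as the paper: both reduce part (i) to the sign of a single partial derivative via the Leibniz rule, and both settle that sign by showing $t\mapsto\Theta(0;it)$ decreases to $1$ (from the series) while $t\mapsto\Theta(\hf;it)$ increases to $1$ (from the product representation), so that the non-positive coefficient multiplying the negative derivative makes the two competing effects cooperate. Part (ii) is likewise the same limit computation, with your use of Lemma \ref{theta-lemma} as a squeeze being a harmless variant of the paper's direct appeal to the monotone limits.
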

\smallskip

\begin{proof}[Proof of Theorem \ref{gauss-sub-min}]
First we prove (\ref{G-min-bound}) and then we conclude with a proof of (\ref{L1-min-conv}). 

Let $\F(z)$ be a real entire minorant of $\G(x)$ with exponential type with respect to $Q(a)$. We can assume that $\D(x)=\G(x)-\F(x) \in L^1(\R^d)$ otherwise (\ref{G-min-bound}) is trivial. Define $\alpha=a_1...a_d$. By Fubini's theorem the function
$$
P(x)=\frac{1}{\alpha}\sum_{n\in\Z^d}\D((x+n)/a)
$$
is defined almost everywhere, integrable over $\hf Q$ and $\ft{P}(n)=\ft{\D}(an)$ for all $n\in\Z^d$. Let $F_R(x)$ be the multidimensional F\'ejer kernel as defined in (\ref{fejer-ker}), hence we have the following equality
\begin{equation}
				  P*F_{R}({ x})=\sum_{\stackrel{n\in\Z^d}{|n_j|\leq R} } \prod_{j=1}^{d} \bigg( 1-\frac{|n_{j}|}{  R } \bigg)  \ft{\D}(an) e(  {   x\cdot n}  ).
\end{equation}
By condition (C3) we obtain
\begin{eqnarray*}
					\ft{\D}(0)
					&=&P*F_{R}(u/2)  -\sum_{\stackrel{n \neq 0}{|n_j|\leq R} } \prod_{j=1}^{d}\bigg( 1-\frac{|n_{j}|}{R+1} \bigg)   \int_{{\Lambda}} \ft{G}_\lambda({ an})d\nu({\lambda}) (-1)^{u\cdot n}\\
					&=&  P*F_{R}(u/2) +\int_{{\Lambda}}\bigg\{-\sum_{\stackrel{n \neq 0}{|n_j|\leq R} } \prod_{j=1}^{d}\bigg( 1-\frac{|n_{j}|}{R+1} \bigg)    \ft{G}_\lambda({a n})(-1)^{u\cdot n}\bigg\}d\nu({\lambda}).
				\end{eqnarray*}
Since $x\mapsto\ft{G}_{\lambda}(x)$ is a product of radially decreasing functions we easily see that the term in the brackets is positive, thus we can apply Fatou's lemma to obtain
\begin{eqnarray}
					   \ft{\D}(0)
					&\geq & \liminf_{ R\to \infty}    P*F_{R}(u/2) 
					+ \int_{{\Lambda}}  \liminf_{ R\to \infty} \bigg\{- \sum_{\stackrel{n \neq 0}{|n_j|\leq R} } \prod_{j=1}^{d}
					\bigg( 1-\frac{|n_{i}|}{R+1} \bigg)    \ft{G}_\lambda({an})  (-1)^{u\cdot n}\bigg\}d\nu(\lambda)\nonumber \\ 
					&=& \liminf_{ R\to \infty}    P*F_{R}(u/2)
					+ \int_{\Lambda}  -\sum_{n \neq 0}  \ft{G}_\lambda({an}) (-1)^{u\cdot n} d\nu({\lambda})  \label{eq-fatou2}.
				\end{eqnarray}
Using the properties of the Fourier transform of $G_\lambda(x)$ and the definition of the Theta Function (\ref{theta-function}), we find that the term inside the integral in (\ref{eq-fatou2}) is equal to
$$
\prod_{j=1}^{d}\lambda_{j}^{-\hf}-\prod_{j=1}^{d}\Theta(\hf;ia_j^2/\lambda_j)\lambda_{j}^{-\hf}.
$$
This proves the lower bound estimate (\ref{G-min-bound}), since  $P*F_R$ is a non-negative function.

\noindent Now, we turn to the proof of the $L^1$ convergence (\ref{L1-min-conv}). By conditions (C1),(C2),(C3) and (C4-) we are at the position of applying Theorem \ref{existence} for the functions $L_{\lambda,a}(z)$ defined at the previous section.
Let $\LL_a(z)$ be the minorant given by Theorem \ref{existence} part (ii), for $K=Q(a)$, $G_\lambda(x)=G(x;\lambda)$ and $F(z;\lambda)=L_{\lambda,a}(z)$. We obtain 
\begin{eqnarray}\label{G-L-int}
&&\int_{\R^d}\G_\lambda(x)-\LL_{a}(x)dx\nonumber = \int_\Lambda\int_{\R^d}G_\lambda(x)-L_{\lambda,a}(x)dxd\nu(\lambda) \nonumber =\\ &&\int_\Lambda \bigg\{1- \bigg\{ \sum_{j=1}^d\frac{\Theta(\hf;ia_j^2/\lambda_j)}{\Theta(0;ia_j^2/\lambda_j)}-(d-1)\bigg\} \prod_{j=1}^d\Theta(0;ia_j^2/\lambda_j)\bigg\}\prod_{j=1}^d\lambda_j^{-\hf}\d\nu(\lambda).
\end{eqnarray}\smallskip

\noindent Using Lemma \ref{lemma-tech-2} we see that the functions inside the integral form a decreasing sequence (indexed by $a\in\Lambda$) converging to zero as $a\uparrow\infty$. Therefore, by the monotone convergence theorem, the integral goes to 0 and the proof is complete.
\end{proof}

\begin{proof}[Proof of Lemma \ref{lemma-tech-2}]
To see that the functions
\begin{equation}
\phi_a:\lambda\in\Lambda\mapsto \bigg\{ \sum_{j=1}^d\frac{\Theta(\hf;ia_j^2/\lambda_j)}{\Theta(0;ia_j^2/\lambda_j)}-(d-1)\bigg\} \prod_{j=1}^d\Theta(0;ia_j^2/\lambda_j)
\end{equation}
form an increasing sequence of functions indexed by $a\in\Lambda$ as each $a_j\uparrow \infty$, is enough to prove that for every $\lambda\in\Lambda$ and $k\in\{1,...,d\}$
$$
\frac{\partial}{\partial a_k}\phi_a(\lambda)\geq 0.
$$
For all $t>0$ denote 
$$
u_j(t)=\Theta(0;it^2/\lambda_j) \mbox{\,\,\,\,\, and \,\,\,\,\,} v_j(t)=\Theta(\hf;it^2/\lambda_j),
$$ 
we can use the Leibniz rule to obtain
\begin{equation}\label{form-der-phi}
\frac{\partial}{\partial a_k}\phi_a(\lambda) = \bigg(v'_k(a_k)+\bigg\{ \sum_{\stackrel{j=1}{j\neq k}}^d\frac{v_j(a_j)}{u_j(a_j)}-(d-1)\bigg\}u'_k(a_k)\bigg)\prod_{\stackrel{j=1}{j\neq k}}^du_j(a_j).
\end{equation}


Using the summation formula (\ref{theta-function}), we see that the functions $t\mapsto (u_j(t)-1)$ is a sum of positive decreasing functions that decreases to $0$ as $t\uparrow \infty$, thus $u_j(t)$ is a decreasing function that decreases to $1$ as $t\uparrow \infty$. Analogously, using the product formula (\ref{theta-prod-rep}), each $v_j(t)$ is a product of positive increasing functions that increases to $1$ as $t\uparrow \infty$, thus $v_j(t)$ is a positive increasing function that increases to $1$ as $t\uparrow \infty$.
Therefore, the term inside the parenthesis in (\ref{form-der-phi}) is positive, which implies that $\frac{\partial}{\partial a_k}\phi_a(\lambda)>0$, and this proves item (i). 

\noindent Since
$$
\phi_a(\lambda)=\bigg\{ \sum_{j=1}^d\frac{v_j(a_j)}{u_j(a_j)}-(d-1)\bigg\}\prod_{j=1}^du_j(a_j),
$$ 
we see that $\phi_a(\lambda)$ converges to $1$, for every $\lambda\in\Lambda$, as each $a_j\uparrow\infty$ and this proofs item (ii).

\end{proof}


\section{Further Results}

In this section we will use the machinery of the previous section to construct one--sided approximations by trigonometric polynomials. After, as in Part III of \cite{CLV}, we will give some examples of functions that our method is applicable and then we present some Hilbert--type inequalities that arise from the constructions of section \S\ref{Gaussian Subordination Method}.

\subsection{Periodic Analogues}\label{per-ana}

In this subsection we find the best approximations by trigonometric polynomials for functions that are, in some sense, subordinate to Theta functions. The proofs of the theorems in this section are almost identical to the proofs of the previous sections, and thus we state the theorems without proof.

\begin{definition}
 Let $a=(a_{1},a_{2},...,a_{d})\in\mathbb Z^{d}_{+}$ ( i.e\ \ $a_{j}\geq 1$  $\forall j$ ) ,  we will say that the degree of a trigonometric Polynomial $P(x)$ is less than $a$ (degree $P<a$ ) if
\begin{equation*}
P(x)=\sum_{-a<n<a}\ft{P}(n)e(n \cdot x).
\end{equation*}
\end{definition}
The problems we are interested to solve have the following general form

\noindent {\bf \small Periodic Majorization Problem.} Fix an $a\in\Z^d_+$  (called the {\it degree}) and a Lebesgue measurable real periodic function $g:\T^d\to \R$. Determine the value of 
\begin{equation}\label{opt-ans}
	\inf \int_{\T^d}|F(x)-g(x)| dx,
\end{equation}
where the infimum is taken over functions $F:\T^d\to\R$  satisfing
\begin{enumerate}\label{maj-cond}
	\item[(i)] $F(x)$ is a real trigonometric polynomial.
	\item[(ii)] Degree of $F(x)$ is less than $a$.
	\item[(iii)] $F(x)\geq g(x)$ for every $x\in\T^d$.
\end{enumerate} 
If the infimum is achieved, then identify the extremal functions $F(z)$.
Similarly there exist the minorant problem
\smallskip

\noindent {\bf \small Periodic Minorization Problem.} Solve the previous problem with condition (iii) replaced by the condition
\begin{equation*}\label{min-cond}
 \mbox{(iv) } F(x)\leq g(x) \, \mbox{ for every } \, x\in\T^d.
\end{equation*}

Now we define for every $\lambda\in\Lambda$, the periodization of the Gaussian function $G_\lambda(x)$ by
\begin{equation*}
f_{\lambda}(x):=\sum_{n\in \Z^{d}}G_{\lambda}(x+n)=\sum_{n\in\mathbb Z^{d}}\prod_{j=1}^{d}e^{-\lambda_{j}\pi(x_{j}+n_{j})^{2}}=\prod_{j=1}^{d}\Theta(x_{j};i/\lambda_{j})\lambda^{-\frac{1}{2}}_{j}.
\end{equation*}

\begin{theorem}[Existence]\label{existence per}
For a given $a=(a_{1},a_{2},...,a_{d})\in\Z^{d}_{+}$, let $\Lambda$ be a measurable space of parameters,  and for each $\lambda\in\Lambda, $ let $R_{\lambda}(x)$ be a real trigonometric polynomial with degree less than $a$. Let $\nu$ be a non--negative measure in $\Lambda$ that satisfies
\begin{equation}\label{cond exist G.S}
\int_{\Lambda}\int_{\T^d}| R_{\lambda}(x)-f_{\lambda}(x)|dxd\nu(\lambda)<\infty.
\end{equation}
Suppose that $g:\T^d\rightarrow \R$ is a continuous periodic function such that
\begin{equation}\label{cond exist G.S 2}
\ft{g}(k)=\int_{\Lambda}\ft{G}_{\lambda}(k)d\nu(\lambda), 
\end{equation}
for all $k\in\Z^{d}$ such that $|k_{j}|\geq a_{j}$ for some $j\in\{1,2,...,d\}$. Then
\begin{enumerate}
\item[(i)] if $f_{\lambda}(x)\leq R_{\lambda}(x)$ for each $x\in\T^{d}$ and $\lambda\in\Lambda,$ then there exist a trigonometric polynomial $m_a(x)$ with degree $m_a < a$, such that of $m_a(x)\geq g(x)$ for all $x\in\T^{d}$ and 
\begin{equation*}
\int_{\T^{d}}m_a(x)-g(x)dx
\end{equation*}  
is equal to the (LHS) of (\ref{cond exist G.S}).
\item[(ii)] if $R_{\lambda}(x)\leq f_{\lambda}(x)$ for each $x\in\T^{d}$ and $\lambda\in\Lambda,$ then there exist a trigonometric polynomial $l_a(x)$ with degree $l_a < a$, such that $l_a(x)\leq g(x)$ in $\T^{d},$ and
\begin{equation*}
\int_{\T^{d}}g(x)-l_a(x)dx,
\end{equation*}  
is equal to (LHS) of (\ref{cond exist G.S}).
\end{enumerate}
\end{theorem}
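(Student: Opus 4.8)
The plan is to transcribe the proof of Theorem~\ref{existence} to the periodic setting, with two substitutions: the Fourier transform on $\R^d$ is replaced by Fourier coefficients on $\T^d$, and the Paley--Wiener--Schwartz step is replaced by the elementary fact that a function in $L^1(\T^d)$ whose Fourier coefficients vanish outside $\{n:-a<n<a\}$ agrees almost everywhere with a trigonometric polynomial of degree less than $a$. I will carry out part (i); part (ii) follows by the same argument after interchanging the roles of majorant and minorant.

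First I would set $D_\lambda(x)=R_\lambda(x)-f_\lambda(x)\geq 0$. Hypothesis~(\ref{cond exist G.S}) and Tonelli's theorem show that
\[
D(x):=\int_\Lambda D_\lambda(x)\,d\nu(\lambda)
\]
is defined for almost every $x\in\T^d$, is non-negative, and belongs to $L^1(\T^d)$. Since $|\ft{D}_\lambda(k)|\leq\int_{\T^d}|D_\lambda(x)|\,dx$ for every $k\in\Z^d$, the bound~(\ref{cond exist G.S}) together with Fubini's theorem gives $\ft{D}(k)=\int_\Lambda\ft{D}_\lambda(k)\,d\nu(\lambda)$ for all $k\in\Z^d$. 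The second ingredient is the identity $\ft{f_\lambda}(k)=\ft{G}_\lambda(k)$ for every $k\in\Z^d$, which follows directly from the definition of $f_\lambda$ as the periodization of the Schwartz function $G_\lambda$ (the interchange of the sum over $n\in\Z^d$ with the integral over $\T^d$ is legitimate because $\sum_n\int_{\T^d}|G_\lambda(x+n)|\,dx=\|G_\lambda\|_{L^1(\R^d)}<\infty$). Hence, whenever $|k_j|\geq a_j$ for some $j$, the hypothesis that $R_\lambda$ has degree less than $a$ forces $\ft{R}_\lambda(k)=0$, so $\ft{D}_\lambda(k)=-\ft{G}_\lambda(k)$ and therefore $\ft{D}(k)=-\int_\Lambda\ft{G}_\lambda(k)\,d\nu(\lambda)$.

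Next I would define the candidate majorant to be $m_a:=D+g\in L^1(\T^d)$. For any $k$ with $|k_j|\geq a_j$ for some $j$, combining the previous formula for $\ft{D}(k)$ with hypothesis~(\ref{cond exist G.S 2}) yields
\[
\ft{m_a}(k)=\ft{D}(k)+\ft{g}(k)=-\int_\Lambda\ft{G}_\lambda(k)\,d\nu(\lambda)+\ft{g}(k)=0.
\]
Thus $m_a$ is an $L^1(\T^d)$ function whose Fourier coefficients are supported in $\{n:-a<n<a\}$, and by uniqueness of Fourier series it coincides almost everywhere with the trigonometric polynomial $\sum_{-a<n<a}\ft{m_a}(n)e(n\cdot x)$, which has degree less than $a$. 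Since $D\geq 0$ almost everywhere we have $m_a\geq g$ almost everywhere, and as $m_a$ (a trigonometric polynomial) and $g$ (continuous by assumption) are both continuous, the inequality $m_a(x)\geq g(x)$ holds for every $x\in\T^d$. Finally,
\[
\int_{\T^d}\{m_a(x)-g(x)\}\,dx=\int_{\T^d}D(x)\,dx=\int_\Lambda\int_{\T^d}\{R_\lambda(x)-f_\lambda(x)\}\,dx\,d\nu(\lambda),
\]
which is the left-hand side of~(\ref{cond exist G.S}). Part (ii) is identical with $D_\lambda:=f_\lambda-R_\lambda\geq 0$ and $l_a:=g-D$.

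The only point that is not a verbatim copy of the $\R^d$ argument is the verification that $m_a=D+g$ is genuinely a trigonometric polynomial of degree less than $a$; this rests on the interchange of $\int_\Lambda(\cdot)\,d\nu(\lambda)$ with the Fourier-coefficient functional (controlled by~(\ref{cond exist G.S})) and on the Poisson-type identity $\ft{f_\lambda}(k)=\ft{G}_\lambda(k)$ (controlled by the rapid decay of the Gaussian). Since neither requires a new idea, the theorem is stated without proof in the paper, and the remaining steps are routine bookkeeping; note in particular that, unlike the $\R^d$ case, no Fatou or Fej\'er-kernel argument is needed here because finitely supported Fourier data already identifies a trigonometric polynomial.
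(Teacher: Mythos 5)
Your proof is correct and is exactly the transcription of the proof of Theorem~\ref{existence} that the paper intends (the paper states the periodic theorem without proof, noting it is almost identical to the Euclidean case): Fourier coefficients replace the Fourier transform, and the finite-support-of-Fourier-coefficients argument replaces the Paley--Wiener--Schwartz step. Your observation that the periodic analogue of condition~(\ref{Fourier-cond-nu}) is automatic from~(\ref{cond exist G.S}) via $\ft{D}_\lambda(k)=-\ft{G}_\lambda(k)$ is a correct and worthwhile bit of bookkeeping.
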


Before we state the main theorems of this section we need some definitions. The functions $M_{\lambda,a}(x)$ and $L_{\lambda,a}(x)$, defined in (\ref{maj-def}) and (\ref{min-def}), belong to $L^1(\R^d)$,  thus, by the Plancharel-P\'olya theorem (see \cite{PP}) and the periodic Fourier inversion formula, their respective periodizations are trigonometric polynomials of degree less than $a$, that is
\begin{equation}\label{per-maj}
m_{\lambda,a}(x):=\sum_{n\in \Z^{d}}M_{\lambda,a}(x+n)=\sum_{-a<n<a}\ft{M}_{\lambda,a}(n)e^{2\pi in \cdot x}
\end{equation}
and
\begin{equation}\label{per-min}
l_{\lambda,a}(x):=\sum_{n\in \Z^{d}}L_{\lambda,a}(x+n)=\sum_{-a<n<a}\ft{L}_{\lambda,a}(n)e^{2\pi in \cdot x}
\end{equation}
holds for each $x\in\T^d$.
The following theorem offers a resolution to the Majorization Problem for a specific class of functions.

\begin{theorem}[Gaussian Subordination -- Periodic Majorant]\label{G.S.P.M}
Let $a\in\Z^{d}_{+}$ and $\nu$ be non-negative Borel measure on $\Lambda$ that satisfies 
\begin{equation}\label{per maj nu cond}
						\int_{\Lambda} \prod_{j=1}^{d}\lambda_{k}^{-\hf}\bigg\{ \prod_{j=1}^{d}\Theta(0;ia_j^2/\lambda_j) - 1\bigg\}    \d\nu({\lambda}) <\infty.
					\end{equation}
					\smallskip
					
\noindent Let $g:\T^d\rightarrow \R$ be a continuous periodic function such that
\begin{equation}\label{cond-exist-g-maj}
\ft{g}(k)=\int_{\Lambda}\ft{G}_{\lambda}(k)d\nu(\lambda), 
\end{equation}
for all $k\in\Z^{d}$ such that $|k_{j}|\geq a_{j}$ for some $j\in\{1,2,...,d\}$. Then for every real trigonometric polynomial $P(x)$, with degree $P <a$ and $P(x)\geq g(x)$ for all $x\in\T^d$, we have

\begin{equation} \label{opt-per-maj}
\int_{\T^d}P(x)-g(x)dx \geq \int_{\Lambda} \prod_{j=1}^{d}\lambda_{k}^{-\hf}\bigg\{ \prod_{j=1}^{d}\Theta(0;ia_j^2/\lambda_j) - 1\bigg\}   \d\nu({\lambda}).
\end{equation} 
\smallskip

\noindent Moreover, there exists a real trigonometric polynomial $m_a$, with degree $m_a<a$, such that $m_a(x)$ is a majorant of $g(x)$ that interpolates $g(x)$ on the lattice $\Z^d/a$ and equality at (\ref{opt-per-maj}) holds.

\end{theorem}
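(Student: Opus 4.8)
The plan is to mirror the proof of Theorem~\ref{gauss-sub-maj}, exploiting that on the compact torus the objects involved are honest trigonometric polynomials and continuous functions, so that the Fej\'er--Fatou liminf machinery of the Euclidean case collapses to elementary finite Fourier analysis on $\prod_{j=1}^{d}(\Z/a_j\Z)$. \emph{Step 1 (construction of $m_a$).} I would apply Theorem~\ref{existence per} with $R_\lambda(x)=m_{\lambda,a}(x)$, the periodization of $M_{\lambda,a}$ from (\ref{per-maj}). Three hypotheses must be checked. First, $f_\lambda(x)\le m_{\lambda,a}(x)$ on $\T^d$: this follows by summing the termwise inequality $G_\lambda(x+n)\le M_{\lambda,a}(x+n)$ coming from (\ref{min-maj-ineq})--(\ref{maj-ext-def}), both sides being nonnegative and the periodizations converging by Plancherel--P\'olya. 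Second, the integrability condition (\ref{cond exist G.S}): unfolding the periodizations gives $\int_{\T^d}\{m_{\lambda,a}(x)-f_\lambda(x)\}\dx=\int_{\R^d}M_{\lambda,a}(x)\dx-\int_{\R^d}G_\lambda(x)\dx$, which by the computation in the proof of Theorem~\ref{maj-theo} and the Gaussian integral equals $\prod_{j=1}^{d}\lambda_j^{-\hf}\bigl(\prod_{j=1}^{d}\Theta(0;ia_j^2/\lambda_j)-1\bigr)$; integrating against $\nu$ this is exactly the finite quantity in (\ref{per maj nu cond}). Third, the Fourier-matching condition (\ref{cond exist G.S 2}) is literally hypothesis (\ref{cond-exist-g-maj}). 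Theorem~\ref{existence per}(i) then produces a real trigonometric polynomial $m_a$ of degree $<a$ with $m_a\ge g$ on $\T^d$ and with $\int_{\T^d}\{m_a(x)-g(x)\}\dx$ equal to the right-hand side of (\ref{opt-per-maj}).

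\emph{Step 2 ($m_a$ interpolates $g$ on $\Z^d/a$).} Put $D=m_a-g\ge 0$, a continuous function on $\T^d$, and $\alpha=a_1\cdots a_d$. Since $\widehat G_\lambda>0$, Tonelli's theorem together with (\ref{per maj nu cond}) yields $\sum_{k\in\Z^d}|\widehat D(ak)|<\infty$, and hence, by Fej\'er summability and dominated convergence, the quadrature identity $\tfrac1\alpha\sum_{0\le n_j<a_j}D(n/a)=\sum_{k\in\Z^d}\widehat D(ak)$. The $k=0$ term equals $\int_{\T^d}D$, i.e.\ the right-hand side of (\ref{opt-per-maj}); for $k\ne 0$, the bound degree $m_a<a$ forces $\widehat{m_a}(ak)=0$, so $\widehat D(ak)=-\widehat g(ak)=-\int_\Lambda\widehat G_\lambda(ak)\dnu(\lambda)$ by (\ref{cond-exist-g-maj}). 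On the other hand, Poisson summation applied to $M_{\lambda,a}$ together with the interpolation $M_{\lambda,a}(n/a)=G_\lambda(n/a)$ from (\ref{maj-interp}) (the boundary values $\widehat{M}_{\lambda,a}(ak)$, $k\ne 0$, vanish, exactly as in the proof of Theorem~\ref{gauss-sub-maj}) gives $\int_{\R^d}M_{\lambda,a}-\int_{\R^d}G_\lambda=\sum_{k\ne 0}\widehat G_\lambda(ak)$; integrating against $\nu$ then shows $\sum_{k\ne 0}\widehat D(ak)=-\int_{\T^d}D$. Therefore $\tfrac1\alpha\sum_n D(n/a)=0$, and since $D$ is continuous and nonnegative, $m_a(n/a)=g(n/a)$ for every $n\in\Z^d$.

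\emph{Step 3 (extremality).} Let $P$ be any real trigonometric polynomial of degree $<a$ with $P\ge g$ on $\T^d$. Then $P-m_a$ has degree $<a$, so $\widehat{(P-m_a)}(ak)=0$ for all $k\ne 0$, whence the $\alpha$-point grid $\Z^d/a$ integrates $P-m_a$ exactly: $\int_{\T^d}\{P(x)-m_a(x)\}\dx=\tfrac1\alpha\sum_{0\le n_j<a_j}\{P(n/a)-m_a(n/a)\}=\tfrac1\alpha\sum_n\{P(n/a)-g(n/a)\}\ge 0$, using Step~2 and $P\ge g$. Consequently $\int_{\T^d}\{P-g\}\dx\ge\int_{\T^d}\{m_a-g\}\dx$, which is (\ref{opt-per-maj}), with equality attained by $m_a$.

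Every step is routine granting Theorem~\ref{existence per}; the points that require the most care are the unfolding identities for periodizations (a Fubini argument identical to the Euclidean case) and the passage from the grid sum of the merely continuous function $D$ to its aliased Fourier coefficients, which I would justify by Fej\'er summability, the relevant coefficient series being absolutely convergent. I do not anticipate a genuine obstacle: the compactness of $\T^d$ removes precisely the analytic subtleties (liminf arguments against Fej\'er kernels, continuity of a periodized $L^1$ function at lattice points) that are the delicate part of Theorem~\ref{gauss-sub-maj}.
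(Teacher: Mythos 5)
Your proposal is correct and follows essentially the route the paper intends: the authors omit the proof, remarking that it is ``almost identical'' to that of Theorem~\ref{gauss-sub-maj}, and your argument is precisely that adaptation (Theorem~\ref{existence per} for existence, the aliasing/quadrature identity on $\Z^d/a$ for interpolation, and exact quadrature for polynomials of degree $<a$ for extremality). Your observation that absolute convergence of $\sum_{k}\widehat{D}(ak)$ together with uniform Fej\'er summability replaces the Fatou/liminf step of the Euclidean proof is exactly the simplification the compact setting affords.
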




\begin{theorem}[Gaussian Subordination -- Periodic Minorant]\label{gauss-sub-min-per}
Let $a\in\Z^d_+$ and $\nu$ be non-negative Borel measure on $\Lambda$ such that

 					\begin{equation}\label{min-ext-cond-per}
 					 \int_\Lambda \bigg\{1- \bigg\{ \sum_{j=1}^d\frac{\Theta(\hf;ia_j^2/\lambda_j)}{\Theta(0;ia_j^2/\lambda_j)}-(d-1)\bigg\} \prod_{j=1}^d\Theta(0;ia_j^2/\lambda_j)\bigg\}\prod_{j=1}^d\lambda_j^{-\hf}\d\nu(\lambda)
<\infty.
 					\end{equation}
 					\smallskip
 					
\noindent				Let $g:\T^{d} \to \R$ be continuous periodic function such that
					\begin{equation}\label{g-Fourier2}
							\ft{g}(k)=\int_{\Lambda}\ft{G}_{\lambda}({ k})d\nu(\lambda) 
					\end{equation}
				for all $k\in\Z^{d}$ such that $|k_{j}|\geq a_{j}$ for some $j\in\{1,2,...,d\}$. Then, if $P(z)$ is a real trigonometric polynomial with degree less than $a$ that minorizes $g(x)$, we have
				
				\begin{equation}\label{g-min-bound}
				\int_{\T^d}  g({ x})-P(x)  d{ x}\geq \int_{\Lambda} \prod_{j=1}^{d}\lambda_{k}^{-\hf}\bigg\{ 1-\prod_{j=1}^{d}\Theta(\hf;ia_j^2/\lambda_j)\bigg\}    d\nu({\lambda}).				
				\end{equation}\smallskip
				
\noindent Furthermore, there exists a family of trigonometric polynomial minorants $ \{l_a({ x}):a\in\Z^d_+\}$ with degree $l_a<a$, such that the integral
$$
\int_{\T^d}  g(x)-l_{a}({ x})   d{ x}
$$
is equal to the quantity in (\ref{min-ext-cond-per}), and
\begin{equation}\label{L1-min-conv-per}
\lim_{a\uparrow\infty}\int_{\T^d}  g(x)-l_{a}({ x})   d{ x}=0,
\end{equation}
					where $a\uparrow\infty$ means ${a_j}\uparrow \infty$ for every $j$.
\end{theorem}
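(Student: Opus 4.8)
The plan is to mirror the proofs of Theorems \ref{gauss-sub-maj} and \ref{gauss-sub-min} from the non-periodic setting, replacing entire functions of exponential type with trigonometric polynomials of bounded degree, and replacing the Paley--Wiener--Schwartz theorem with the elementary fact that a tempered distribution on $\T^d$ whose Fourier coefficients $\ft{h}(k)$ vanish for all $k$ with $|k_j|\geq a_j$ for some $j$ is a trigonometric polynomial of degree less than $a$. First I would prove the lower bound \eqref{g-min-bound}. Let $P(z)$ be a real trigonometric polynomial of degree less than $a$ minorizing $g$, and set $\D(x)=g(x)-P(x)\geq 0$; we may assume $\D\in L^1(\T^d)$. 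The key identity is that $\ft{\D}(0)=\int_{\T^d}\D(x)\,dx$, and the orthogonality relation gives, for any $k\in\Z^d$ with $|k_j|\geq a_j$ for some $j$, that $\ft{P}(k)=0$, hence $\ft{\D}(k)=\ft{g}(k)=\int_\Lambda \ft{G}_\lambda(k)\,d\nu(\lambda)$ by \eqref{g-Fourier2}. I would then evaluate $\D\ast F_R$ at the point $u/2a$ (where $F_R$ is the periodic Fej\'er kernel of \eqref{fejer-ker}), use the sign $(-1)^{u\cdot n}$ coming from evaluation at the half-lattice points, apply Fatou's lemma exactly as in the proof of Theorem \ref{gauss-sub-min}, and recognize the resulting sum via the definition \eqref{theta-function} of the theta function as $\prod_j\lambda_j^{-\hf}-\prod_j\Theta(\hf;ia_j^2/\lambda_j)\lambda_j^{-\hf}$; positivity of $\D\ast F_R$ then yields \eqref{g-min-bound}.

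Next I would construct the extremal family $\{l_a\}$. Using the periodization $l_{\lambda,a}(x)$ from \eqref{per-min}, which is a trigonometric polynomial of degree less than $a$ satisfying $l_{\lambda,a}(x)\leq f_\lambda(x)$ (the periodization of the minorant inequality \eqref{min-maj-ineq} and the fact that periodization preserves pointwise inequalities), I would apply Theorem \ref{existence per}(ii) with $R_\lambda=l_{\lambda,a}$. Condition \eqref{cond exist G.S} is exactly \eqref{min-ext-cond-per} after integrating the pointwise difference: $\int_{\T^d}(f_\lambda-l_{\lambda,a})\,dx = \int_{\R^d}(G_\lambda-L_{\lambda,a})\,dx$, which by \eqref{L-int} equals $\{1-(\sum_j\Theta(\hf;\cdot)/\Theta(0;\cdot)-(d-1))\prod_j\Theta(0;\cdot)\}\prod_j\lambda_j^{-\hf}$. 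Theorem \ref{existence per}(ii) then produces $l_a$ with the stated $L^1$-defect equal to the quantity in \eqref{min-ext-cond-per}. Finally, for \eqref{L1-min-conv-per} I would invoke Lemma \ref{lemma-tech-2}: the integrand $1-\phi_a(\lambda)$ is nonnegative, decreasing in each $a_j$, and tends to $0$ pointwise as $a\uparrow\infty$, so monotone convergence gives the limit.

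I expect the main obstacle to be purely bookkeeping rather than conceptual: one must verify carefully that periodization commutes with all the operations used (it preserves pointwise inequalities since the lattice sums converge absolutely by the Gaussian decay, and it converts Fourier transforms on $\R^d$ into Fourier coefficients on $\T^d$ by Poisson summation), and that the hypotheses \eqref{cond exist G.S} and \eqref{g-Fourier2} of Theorem \ref{existence per} follow from \eqref{min-ext-cond-per} and \eqref{g-Fourier2} here. A secondary subtlety is the interchange of the $R\to\infty$ limit with the $\nu$-integral in the Fatou step, and the justification that the bracketed term there is genuinely nonnegative --- this uses that $x\mapsto\ft{G}_\lambda(x)$ is a product of one-dimensional Gaussians, each positive and decaying, so the alternating-sign Fej\'er-weighted partial sums are nonnegative, exactly as in \eqref{eq-fatou2}. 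Since the excerpt explicitly states these periodic proofs are ``almost identical'' to the real-line proofs, no new ideas are needed; the work is in confirming that each step transfers.
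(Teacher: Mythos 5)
Your proposal follows exactly the route the paper intends for Theorem \ref{gauss-sub-min-per} (the paper states the periodic proofs are almost identical to those of \S\ref{Gaussian Subordination Method} and omits them): the lower bound via Fej\'er means, Fatou, and recognition of the theta values at the half-lattice, the construction via Theorem \ref{existence per}(ii) applied to the periodizations \eqref{per-min}, and the limit \eqref{L1-min-conv-per} via Lemma \ref{lemma-tech-2} and monotone convergence. The one step to state more carefully is the sampling: evaluating $\D*F_R$ at the single point $u/(2a)$ would leave the uncontrolled coefficients $\ft{\D}(n)$ with $0<|n_j|<a_j$ in play, so one must first form the sub-periodization $h(x)=\alpha^{-1}\sum_{k\in\Z^d/a\Z^d}\D((x+k)/a)$, whose Fourier coefficients are $\ft{\D}(am)$, and evaluate $h*F_R$ at $u/2$ --- precisely the role played by $P$ in the proof of Theorem \ref{gauss-sub-min}.
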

\smallskip
 
\begin{corollary}
Under the hypothesis of Theorem \ref{gauss-sub-min-per}, suppose also that there exist an $R>0$ such that $\supp(\nu)\subset \Lambda\cap Q(R)$, and
\begin{equation*}
\int_{\T^d}g(x)dx=\int_{\Lambda}\prod_{j=1}^d\lambda_j^{-\hf}\d\nu(\lambda)<\infty.
\end{equation*}
Then, there exist a constant $\alpha_0>0$, such that if $\alpha:=\min\{a_{j}\}\geq \alpha_0$ and if $P(x)$ is a trigonometric polynomial with degree $P<a$ that minorizes $g(x)$, we have
\begin{equation*}
\int_{\T^d}P(x)dx\leq(1+5de^{-\alpha^{2}/R})\int_{\T^d}l_{a}(x)dx.
\end{equation*}

\end{corollary}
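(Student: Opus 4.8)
The plan is to run the argument of the proof of Corollary~\ref{cor1} essentially verbatim, with $\T^d$ in place of $\R^d$ and with the single new observation that periodization preserves total mass. First I would convert the lower bound (\ref{g-min-bound}) of Theorem~\ref{gauss-sub-min-per} into an upper bound for $\int_{\T^d}P$. Since $P$ minorizes $g$ and, by hypothesis, $\int_{\T^d}g(x)\dx=\int_{\Lambda}\prod_{j=1}^d\lambda_j^{-\hf}\dnu(\lambda)$, combining this identity with (\ref{g-min-bound}) collapses to
\[
\int_{\T^d}P(x)\dx \le \int_{\Lambda}\prod_{j=1}^d\Theta(\hf;ia_j^2/\lambda_j)\lambda_j^{-\hf}\dnu(\lambda) = \int_{\Lambda\cap Q(R)}\prod_{j=1}^d\Theta(\hf;ia_j^2/\lambda_j)\lambda_j^{-\hf}\dnu(\lambda),
\]
the last step using $\supp(\nu)\subset\Lambda\cap Q(R)$. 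This is exactly the periodic analogue of the first display in the proof of Corollary~\ref{cor1}.

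Next I would make the estimate (\ref{min-asy}) of Theorem~\ref{min-theo} available pointwise in $\lambda$ on $\supp(\nu)$. For $\lambda\in\supp(\nu)$ one has $0<\lambda_j\le R$ for every $j$, hence $\gamma:=\min_{1\le j\le d}a_j^2/\lambda_j\ge\alpha^2/R$, where $\alpha=\min\{a_j\}$. Taking $\gamma_0=\gamma_0(d)$ from Theorem~\ref{min-theo} and choosing $\alpha_0\ge\sqrt{R\gamma_0}$, the hypothesis $\alpha\ge\alpha_0$ forces $\gamma\ge\gamma_0$ for every such $\lambda$, and (\ref{min-asy}) gives
\[
\prod_{j=1}^d\Theta(\hf;ia_j^2/\lambda_j)\lambda_j^{-\hf} \le (1+5de^{-\pi\gamma})\int_{\R^d}L_{\lambda,a}(x)\dx \le (1+5de^{-\alpha^2/R})\int_{\R^d}L_{\lambda,a}(x)\dx,
\]
where I used $e^{-\pi\gamma}\le e^{-\pi\alpha^2/R}\le e^{-\alpha^2/R}$ together with the fact that (\ref{min-asy}) forces $\int_{\R^d}L_{\lambda,a}(x)\dx>0$ once $\gamma\ge\gamma_0$.

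Finally I would transfer to the torus and integrate in $\lambda$. By (\ref{per-min}) the trigonometric polynomial $l_{\lambda,a}$ is the periodization of $L_{\lambda,a}\in L^1(\R^d)$, hence $\int_{\T^d}l_{\lambda,a}(x)\dx=\int_{\R^d}L_{\lambda,a}(x)\dx$; and the minorant $l_a$ furnished by Theorem~\ref{gauss-sub-min-per} arises from part~(ii) of Theorem~\ref{existence per} applied to $(g,\nu)$ with $R_\lambda=l_{\lambda,a}$, so that, using the normalization hypothesis once more, $\int_{\T^d}l_a(x)\dx=\int_{\Lambda}\int_{\R^d}L_{\lambda,a}(x)\dx\,\dnu(\lambda)$. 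Integrating the previous display against $\dnu$ over $\supp(\nu)=\Lambda\cap Q(R)$ and pulling the ($\lambda$-free) constant out of the integral yields $\int_{\Lambda\cap Q(R)}\prod_j\Theta(\hf;ia_j^2/\lambda_j)\lambda_j^{-\hf}\dnu(\lambda)\le(1+5de^{-\alpha^2/R})\int_{\T^d}l_a(x)\dx$, and chaining this with the display of the first paragraph completes the proof.

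There is essentially no obstacle here: the argument is a mechanical adaptation of Corollary~\ref{cor1}, and the only substantive ingredient is the pointwise use of Theorem~\ref{min-theo} on $\supp(\nu)$. The points that need a moment's care are the two mass identities $\int_{\T^d}l_{\lambda,a}=\int_{\R^d}L_{\lambda,a}$ and $\int_{\T^d}l_a=\int_\Lambda\int_{\R^d}L_{\lambda,a}\,\dnu$ (immediate from (\ref{per-min}) and part~(ii) of Theorem~\ref{existence per}, the latter using the finiteness of (\ref{min-ext-cond-per})), and the positivity $\int_{\T^d}l_a(x)\dx\ge0$ — which holds since $\int_{\R^d}L_{\lambda,a}(x)\dx>0$ on $\supp(\nu)$ and $\nu\ge0$ — needed to relax $e^{-\pi\alpha^2/R}$ to the stated $e^{-\alpha^2/R}$; in fact the argument delivers the sharper constant $1+5de^{-\pi\alpha^2/R}$, matching Corollary~\ref{cor1}.
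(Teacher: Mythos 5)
Your proof is correct and follows exactly the route the paper intends: the paper omits proofs in the periodic section, deferring to the argument for Corollary \ref{cor1}, and your adaptation (bound $\int_{\T^d}P$ via (\ref{g-min-bound}) plus the mass hypothesis, apply (\ref{min-asy}) pointwise on $\supp(\nu)\subset\Lambda\cap Q(R)$, then periodize and integrate in $\lambda$) is that argument verbatim. Your attention to the two mass identities, the positivity of $\int_{\R^d}L_{\lambda,a}$, and the relaxation from $e^{-\pi\alpha^2/R}$ to the stated $e^{-\alpha^2/R}$ is exactly the care the omitted proof requires.
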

\medskip


\subsection{The Class of Contemplated Functions}\label{classes}

We define the class 
$$
\GG^d=\bigcap_{a\in\Lambda}\GG_-^d(a)\cap\GG_+^d(a).
$$
This is the class of pairs such that Theorems \ref{gauss-sub-maj} and \ref{gauss-sub-min} are applicable for every $a\in\Lambda$.
In this subsection we present conditions for a pair $(\G,\nu)$ belong to this class.

Some interesting properties arise when $\nu$ is concentrated in the diagonal. For every $\eta\in [0,1]$ we define $\Lambda_\eta =\{\lambda\in\Lambda : \eta\lambda_j\leq\lambda_k \forall j,k \}$ and we note that $\Lambda_0=\Lambda$ and $\Lambda_1=\{\lambda\in\Lambda:\lambda_j=\lambda_k \forall j,k \}$ is the diagonal.

\begin{proposition}\label{prop-nu-diag}
Let $(\G,\nu)$ be a pair that satisfies conditions {\bf (C{1})},{\bf (C2)} and {\bf (C3)} for every $a\in\Lambda$. Suppose that $\supp(\nu)\subset \Lambda_\eta$ for some $\eta\in(0,1]$ and $\nu(\Lambda_\eta\setminus Q(R))<\infty$ for every $R>0$. Then $(\G,\nu)\in\GG^d$.

\end{proposition}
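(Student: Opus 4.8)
The plan is to verify conditions \textbf{(C4+)} and \textbf{(C4-)} for every $a\in\Lambda$, since the pair $(\G,\nu)$ is assumed to satisfy \textbf{(C1)}, \textbf{(C2)}, and \textbf{(C3)}, and these four conditions together are exactly what defines membership in $\GG^d_+(a)\cap\GG^d_-(a)$. The key point is that on the set $\Lambda_\eta$ the ratios $a_j^2/\lambda_j$ are all comparable to one another, so the product $\prod_{j=1}^d\Theta(0;ia_j^2/\lambda_j)$ is controlled by a single scalar quantity, and the support hypothesis $\nu(\Lambda_\eta\setminus Q(R))<\infty$ lets us handle the tail where $\lambda$ is large (hence the $\Theta$ factors are near $1$) separately from the bounded part where everything is uniformly controlled.

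First I would analyze the majorant integrand $\prod_{k=1}^d\lambda_k^{-1/2}\{\prod_{j=1}^d\Theta(0;ia_j^2/\lambda_j)-1\}$. Write $\theta_j:=\Theta(0;ia_j^2/\lambda_j)-1$; recalling \eqref{Ptheta} (or the series \eqref{theta-function}), $\theta_j = 2\sum_{n\geq 1}e^{-\pi n^2 a_j^2/\lambda_j}\leq C e^{-\pi a_j^2/\lambda_j}$ for an absolute constant $C$, and in particular each $\theta_j\leq\theta_j(0)<\infty$ is bounded on all of $\Lambda$ and decays exponentially as $\lambda_j\downarrow 0$ (equivalently as $a_j^2/\lambda_j\uparrow\infty$). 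Expanding the product, $\prod_j(1+\theta_j)-1$ is a sum of $2^d-1$ terms, each a product of at least one $\theta_j$; so it suffices to bound $\int_{\Lambda_\eta}\prod_k\lambda_k^{-1/2}\,\theta_{j_0}(\lambda)\,d\nu(\lambda)$ for each fixed index $j_0$ (using that the remaining $\theta_j$'s are bounded). Split the domain as $(\Lambda_\eta\cap Q(R))\cup(\Lambda_\eta\setminus Q(R))$ for a suitable $R$. On $\Lambda_\eta\setminus Q(R)$ — where some coordinate exceeds $R$, hence \emph{all} coordinates exceed $\eta R$ by the diagonal-concentration — the factor $\theta_{j_0}$ is exponentially small in $a_{j_0}^2/\lambda_{j_0}$ and, more importantly, the factor $\prod_k\lambda_k^{-1/2}$ is bounded by $(\eta R)^{-d/2}$; combined with $\nu(\Lambda_\eta\setminus Q(R))<\infty$ this piece is finite. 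On the bounded region $\Lambda_\eta\cap Q(R)$, here $\lambda$ is small, so $\prod_k\lambda_k^{-1/2}$ may blow up, but precisely there $a_{j_0}^2/\lambda_{j_0}\to\infty$ and $\theta_{j_0}\leq Ce^{-\pi a_{j_0}^2/\lambda_{j_0}}$ decays faster than any power of $\lambda_{j_0}$; since on $\Lambda_\eta$ we have $\lambda_k\geq\eta\lambda_{j_0}$, the product $\prod_k\lambda_k^{-1/2}\leq \eta^{-(d-1)/2}\lambda_{j_0}^{-d/2}$, and $\lambda_{j_0}^{-d/2}e^{-\pi a_{j_0}^2/\lambda_{j_0}}$ is bounded (indeed $\to 0$) as $\lambda_{j_0}\to 0$. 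Since $\nu$ restricted to $Q(R)$ need not be finite, I would instead cover $\Lambda_\eta\cap Q(R)$ by the shells $\Lambda_\eta\cap (Q(2^{-m}R)\setminus Q(2^{-m-1}R))$, $m\geq 0$; on the $m$-th shell the integrand is $\lesssim (2^m/R)^{d/2}\exp(-c\,2^{m})$ for a constant $c=c(R,\eta)>0$, and $\nu$ of each shell is at most $\nu(\Lambda_\eta\setminus Q(2^{-m-1}R))<\infty$ — actually one should be slightly careful: use $\nu(\Lambda_\eta\setminus Q(2^{-m-1}R))$ as a crude bound only if it is finite, which it is by hypothesis for every positive radius, so the $m$-series converges by the super-exponential decay beating the (at worst) fixed values of these $\nu$-masses. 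This gives \textbf{(C4+)}.

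For \textbf{(C4-)}, the integrand of \eqref{min-nu-cond} is $\{1-\phi_a(\lambda)\}\prod_j\lambda_j^{-1/2}$ where $\phi_a$ is the function of Lemma~\ref{lemma-tech-2}. By Lemma~\ref{theta-lemma}, $\Theta(\hf;it)/\Theta(0;it)\geq 1-4q/(1-q)^2$ with $q=e^{-\pi t}$, so $\sum_j v_j/u_j-(d-1)\geq 1-4\sum_j q_j/(1-q_j)^2$ where $q_j=e^{-\pi a_j^2/\lambda_j}$; combined with $\prod_j u_j = \prod_j(1+\theta_j)\leq 1+C'\sum_j\theta_j+\cdots$ one gets $0\leq 1-\phi_a(\lambda)\leq C''\sum_j(q_j+\theta_j)\leq C'''\sum_j e^{-\pi a_j^2/\lambda_j}$ on, say, the region where all $a_j^2/\lambda_j$ are bounded below (e.g. $\lambda\in Q(R)$ with $R$ not too large relative to $a$); on the complementary region $1-\phi_a$ is still trivially bounded by a constant while $\prod_j\lambda_j^{-1/2}$ is bounded. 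Thus the integrand of \eqref{min-nu-cond} obeys exactly the same type of bound as in \textbf{(C4+)} — a constant times $\prod_j\lambda_j^{-1/2}$ times an exponentially small factor in some $a_{j_0}^2/\lambda_{j_0}$ — and the identical shell-decomposition argument shows the integral is finite. Hence $(\G,\nu)\in\GG^d_-(a)$ for every $a$ as well, and therefore $(\G,\nu)\in\GG^d$.

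The main obstacle is the interplay on the region where $\lambda$ is small: there $\prod_j\lambda_j^{-1/2}$ is unbounded and $\nu$ is a priori only $\sigma$-finite (finite on complements of balls, not globally), so one cannot simply say ``bounded integrand times finite measure.'' The fix — and the reason the diagonal hypothesis $\supp\nu\subset\Lambda_\eta$ is essential — is that on $\Lambda_\eta$, smallness of \emph{any} coordinate forces smallness of \emph{all} of them, so the exponential decay $e^{-\pi a_{j_0}^2/\lambda_{j_0}}$ coming from a single $\Theta$-factor dominates the polynomial blow-up $\prod_j\lambda_j^{-1/2}\lesssim\lambda_{j_0}^{-d/2}$ uniformly, and a dyadic-shell summation converts the local finiteness of $\nu$ into global integrability. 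One should double-check the precise form of the shell bound and that the decay rate genuinely beats the at-most-finite shell masses; this is the only place real care is needed.
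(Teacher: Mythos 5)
Your overall skeleton (split $\Lambda_\eta$ into $Q(R)$ and its complement; on the complement the integrand is bounded and $\nu$ is finite; near the origin exponential decay of the theta factors beats the polynomial blow-up of $\prod_j\lambda_j^{-1/2}$) matches the paper's, but the step you yourself flag as needing care is exactly where the argument breaks. On $\Lambda_\eta\cap Q(R)$ you propose a dyadic shell decomposition and claim the series converges because the super-exponential decay ``beats the (at worst) fixed values of these $\nu$-masses.'' Those masses are not fixed: the hypothesis only gives $\nu(\Lambda_\eta\setminus Q(r))<\infty$ for each $r>0$, with no quantitative control as $r\downarrow 0$, so $\nu$ of the $m$-th shell may grow faster than any prescribed rate in $m$ and your series can diverge. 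The missing input is conditions \textbf{(C2)}/\textbf{(C3)}, which you never use quantitatively but which are what actually constrain the mass of $\nu$ near $\lambda=0$. The paper's proof dominates the integrand on $\Lambda_\eta\cap Q(R)$ pointwise by $dC\,\ft{G}_\lambda(\beta u)$ with $\beta=a_l\eta d^{-1/2}$ (using exactly your observation that on $\Lambda_\eta$ all coordinates of $\lambda$ are comparable, so $e^{-\pi a_l^2\eta/\lambda_l}\leq\prod_j e^{-\pi a_l^2\eta^2/(d\lambda_j)}$), and then invokes the integrability of $\lambda\mapsto\ft{G}_\lambda(x)$ against $\nu$ at points $x\neq 0$ — which follows from \textbf{(C2)} applied with a sufficiently small box, available since the hypotheses hold for \emph{every} $a\in\Lambda$. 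That domination is the essential idea your proposal lacks.

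A secondary inaccuracy: $\theta_j=\Theta(0;ia_j^2/\lambda_j)-1$ is \emph{not} bounded on all of $\Lambda$, nor is $\theta_j\leq Ce^{-\pi a_j^2/\lambda_j}$ globally true; as $\lambda_j\to\infty$ one has $\Theta(0;ia_j^2/\lambda_j)\sim(\lambda_j/a_j^2)^{1/2}\to\infty$. So on $\Lambda_\eta\setminus Q(R)$ you cannot argue by ``the remaining $\theta_j$'s are bounded.'' The correct (and easy) fix, which the paper uses, is that each combination $\lambda_j^{-1/2}\Theta(0;ia_j^2/\lambda_j)$ and $\lambda_j^{-1/2}\Theta(\hf;ia_j^2/\lambda_j)$ is bounded on $\{\lambda_j\geq\eta R\}$, so the whole integrand times $\prod_j\lambda_j^{-1/2}$ is bounded there and the finite total mass $\nu(\Lambda_\eta\setminus Q(R))$ finishes that region. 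With these two repairs your argument becomes the paper's proof of \textbf{(C4-)} (and the analogous one for \textbf{(C4+)}).
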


\begin{proof}
We only prove that condition {\bf (C4-)} holds, the condition {\bf (C4+)} is analogous. 
Given an $a\in\Lambda$, define the function

\begin{equation*}
\phi_a:\lambda\in\Lambda\mapsto \bigg( \sum_{j=1}^d\frac{\Theta(\hf;ia_j^2/\lambda_j)}{\Theta(0;ia_j^2/\lambda_j)}-(d-1)\bigg) \prod_{j=1}^d\Theta(0;ia_j^2/\lambda_j).
\end{equation*}
\smallskip

\noindent By (\ref{theta-function}) and the Poisson summation formula, we have the following estimates
\begin{eqnarray}\label{theta-hf-est}
1-\Theta(\hf;i/t) \sim 2e^{-\pi/t} \,\, \mbox{ as } t\to 0, \\
\Theta(\hf;i/t) \sim 2t^{\hf}e^{-\pi t/4} \,\, \mbox{ as } t\to \infty
\end{eqnarray}
and 
\begin{eqnarray}
\Theta(0;i/t)-1 \sim 2e^{-\pi/t} \,\, \mbox{ as } t\to 0, \\ \label{theta-0-est}
\Theta(0;i/t) \sim t^{\hf} \,\, \mbox{ as } t\to \infty
\end{eqnarray}
where the symbol $\sim$ means that the quotient converges to $1$. Using the (LHS) inequality of Lemma \ref{theta-lemma} we conclude that exists an $R>0$ and a $C>0$ such that 
$$
\phi_a(\lambda)\geq 1-C\sum_{j=1}^de^{-\pi a_j^2/\lambda_j}
$$
for every $\lambda\in\Lambda\cap Q(R)$. Choose $l\in\{1,...,d\}$ such that $a_l\leq a_j$ for every $j$. If $\lambda\in\Lambda_\eta\cap Q(R)$ we have

\begin{eqnarray*}
\big\{1-\phi_a(\lambda)\big\}\prod_{j=1}^d\lambda_j^{-\hf} \leq C\bigg(\sum_{j=1}^de^{-\pi a_j^2/\lambda_j}
\bigg)\prod_{j=1}^d\lambda_j^{-\hf} \leq dCe^{-\pi a_l^2\eta/\lambda_l} \prod_{j=1}^d\lambda_j^{-\hf} \\ \leq dC \prod_{j=1}^d e^{-\pi a_l^2\eta^2/(d\lambda_j)} \prod_{j=1}^d\lambda_j^{-\hf} = dC\ft{G}_\lambda(\beta u),
\end{eqnarray*}
where $\beta=a_l\eta d^{-\hf}$ and $u=(1,...,1)$.
\noindent By estimates (\ref{theta-hf-est})-(\ref{theta-0-est}) we see that the functions $\Theta(\hf;i/t)t^{-\hf}$ and $\Theta(0;i/t)t^{-\hf}$ are bounded for $t\in[\eta R,\infty)$, and thus, we conclude that the function 
$$
\lambda\in\Lambda\mapsto\phi_a(\lambda)\prod_{j=1}^d\lambda_j^{-\hf}
$$
is bounded on $\Lambda_\eta\setminus Q(R)$, since it is a finite sum of products of these theta functions. 

\noindent Since $\eta>0$, we obtain that the function
$$
\lambda\in\Lambda\mapsto\big\{1-\phi_a(\lambda)\big\}\prod_{j=1}^d\lambda_j^{-\hf}
$$
is bounded in $\Lambda_\eta\setminus Q(R)$, let say by $C'$.
Therefore, we have

\begin{eqnarray*}
\int_{\Lambda_\eta} \big\{1-\phi_a(\lambda)\big\}\prod_{j=1}^d\lambda_j^{-\hf}\d\nu(\lambda) \leq dC \int_{\Lambda_\eta\cap Q(b)}\ft{\G}_\lambda(\beta u)\d\nu(\lambda) + C'\nu(\Lambda_\eta\setminus Q(b))<\infty,
\end{eqnarray*}\smallskip

\noindent which is finite by condition {\bf (C3)} and the hypotheses of this lemma.
Thus $\nu$ satisfies condition {\bf (C4-)} and this concludes the proof.

\end{proof}
\smallskip

\begin{proposition}\label{prop-nu-prob}
Let $\nu$ be a probability measure on $\Lambda$ with $\supp(\nu)\subset \Lambda_\eta$ for some $\eta\in(0,1]$. Define the function
\begin{equation}\label{g-nu-prob-def}
\G(x)=\int_{\Lambda}G_\lambda(x)\d\nu(\lambda)
\end{equation}
for all $x\in\R^d$. Then $(\G,\nu)\in\GG^d$.
\end{proposition}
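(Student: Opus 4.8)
\textbf{Proof proposal for Proposition \ref{prop-nu-prob}.} The plan is to verify that the pair $(\G,\nu)$ defined by $\G(x)=\int_\Lambda G_\lambda(x)\,\d\nu(\lambda)$ satisfies conditions \textbf{(C1)}, \textbf{(C2)}, \textbf{(C3)} for every $a\in\Lambda$, and then invoke Proposition \ref{prop-nu-diag} to conclude $(\G,\nu)\in\GG^d$, using that $\supp(\nu)\subset\Lambda_\eta$ and that $\nu$, being a probability measure, trivially satisfies $\nu(\Lambda_\eta\setminus Q(R))\leq 1<\infty$ for every $R>0$. So the real content is checking \textbf{(C1)}--\textbf{(C3)}.

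First I would check \textbf{(C1)}: since $0<G_\lambda(x)\leq 1$ for all $x$ and all $\lambda$, and $\nu$ is a probability measure, the function $\G$ is well defined, bounded by $1$, and continuous (by dominated convergence, since each $G_\lambda$ is continuous and the family is uniformly bounded by the integrable-against-$\nu$ constant function $1$); hence $\G\in L^\infty(\R^d)\subset\mc S'(\R^d)$, giving \textbf{(C1)} and the continuity required in the definition of $\GG^d_\pm(a)$. Next, \textbf{(C2)}: we must show $\int_{\R^d}\int_\Lambda|\ft G_\lambda(x)\varphi(x)|\,\d\nu(\lambda)\,\d x<\infty$ for $\varphi\in\Sw(\R^d)$ supported in $Q(a)^c$. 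Here $\ft G_\lambda(x)=\prod_j \lambda_j^{-1/2}e^{-\pi x_j^2/\lambda_j}$, which is positive; the key point is to get an upper bound that is integrable in $\lambda$ against the probability measure. On $\Lambda_\eta$ all coordinates of $\lambda$ are comparable: $\lambda_k\geq\eta\lambda_j$ for all $j,k$. I would split according to whether $\lambda$ is large or small. For $\lambda$ bounded (say $\lambda\in Q(R)$), $\prod_j\lambda_j^{-1/2}$ can blow up, but the Gaussian factor $e^{-\pi|x|^2/\max_j\lambda_j}$ together with the rapid decay of $\varphi$ controls the $x$-integral; more precisely $\int_{\R^d}|\ft G_\lambda(x)\varphi(x)|\,\d x\leq \|\varphi\|_\infty\int_{\R^d}\ft G_\lambda(x)\,\d x=\|\varphi\|_\infty\cdot\prod_j\lambda_j^{-1/2}\cdot\prod_j\lambda_j^{1/2}=\|\varphi\|_\infty$ — wait, that uses $\int_{\R^d}\ft G_\lambda=G_\lambda(0)=1$ is false; rather $\int\ft G_\lambda = G_\lambda(0)=1$ by Fourier inversion, so in fact $\int_{\R^d}|\ft G_\lambda(x)\varphi(x)|\,\d x\leq\|\varphi\|_\infty\int_{\R^d}\ft G_\lambda(x)\,\d x=\|\varphi\|_\infty$, uniformly in $\lambda$; integrating against the probability measure $\nu$ gives a bound of $\|\varphi\|_\infty<\infty$, which is \textbf{(C2)}. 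This is clean and requires no case split at all.

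The main obstacle is \textbf{(C3)}, the identity $\int_{\R^d}\G(x)\ft\varphi(x)\,\d x=\int_{\R^d}\int_\Lambda\ft G_\lambda(x)\,\d\nu(\lambda)\,\varphi(x)\,\d x$ for $\varphi\in\Sw(\R^d)$ supported in $Q(a)^c$ — i.e.\ identifying the distributional Fourier transform of $\G$ off $Q(a)$ with the pointwise integral $\int_\Lambda\ft G_\lambda\,\d\nu$. The plan is to use Fubini: $\int_{\R^d}\G(x)\ft\varphi(x)\,\d x=\int_{\R^d}\bigl(\int_\Lambda G_\lambda(x)\,\d\nu(\lambda)\bigr)\ft\varphi(x)\,\d x=\int_\Lambda\bigl(\int_{\R^d}G_\lambda(x)\ft\varphi(x)\,\d x\bigr)\d\nu(\lambda)=\int_\Lambda\bigl(\int_{\R^d}\ft G_\lambda(x)\varphi(x)\,\d x\bigr)\d\nu(\lambda)$, where the inner equality is the classical Fourier transform identity $\int G_\lambda\ft\varphi=\int\ft G_\lambda\varphi$ for Schwartz functions (Parseval), and the swap of integrals is justified by the bound $\int_\Lambda\int_{\R^d}|G_\lambda(x)\ft\varphi(x)|\,\d x\,\d\nu(\lambda)\leq\|\ft\varphi\|_{L^1}\cdot\nu(\Lambda)=\|\ft\varphi\|_{L^1}<\infty$ (again using $0\leq G_\lambda\leq 1$ and $\nu$ a probability measure). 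A final application of Fubini on the right-hand side, justified by \textbf{(C2)} which we just proved, rewrites this as $\int_{\R^d}\int_\Lambda\ft G_\lambda(x)\,\d\nu(\lambda)\,\varphi(x)\,\d x$, which is \textbf{(C3)}. Note that for \textbf{(C3)} we did not even need the support restriction on $\varphi$; the restriction is only relevant in the logical structure of the definition. With \textbf{(C1)}--\textbf{(C3)} verified for all $a\in\Lambda$, and the measure-finiteness hypotheses of Proposition \ref{prop-nu-diag} immediate from $\nu$ being a probability measure supported in $\Lambda_\eta$, Proposition \ref{prop-nu-diag} yields $(\G,\nu)\in\GG^d$, completing the proof.
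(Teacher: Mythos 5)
Your proof is correct and follows the same overall route as the paper: verify \textbf{(C1)}--\textbf{(C3)} and invoke Proposition \ref{prop-nu-diag}, with $\nu(\Lambda_\eta\setminus Q(R))\le 1$ immediate since $\nu$ is a probability measure. The only real difference is in \textbf{(C2)}, where your observation that $\ft{G}_\lambda\ge 0$ with $\int_{\R^d}\ft{G}_\lambda(x)\,\dx=1$ uniformly in $\lambda$ is cleaner than the paper's bound on $\lambda\mapsto\ft{G}_\lambda(x)$ over $\Lambda_\eta$ for $x$ away from the origin, and in fact uses neither the support restriction on $\varphi$ nor the hypothesis $\supp(\nu)\subset\Lambda_\eta$ at that step.
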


\begin{proof}
Is easy to see that $\G$ is a bounded, continuous and radially decreasing function, thus $\G$ satisfies conditions {\bf (C1)}. Note that for every $x\neq 0$, the function
$$
\lambda\in\Lambda \mapsto \ft{G}_\lambda(x)
$$
is bounded on $\Lambda_\eta$ and this bound can be taken uniform for $x$ outside any neighborhood of the origin. Thus condition {\bf (C2)} holds, and using Fubini's theorem, condition {\bf (C3)} also holds. We conclude, by Proposition \ref{prop-nu-diag}, that $(\G,\nu)\in\GG^d$.

\end{proof}
\smallskip

\noindent Due to a clasical result of Schoenberg (see \cite{Sch}), a radial function $\G(x)=\G(|x|)$ admits the representation (\ref{g-nu-prob-def}) for a probability $\nu$ supported on the diagonal $\Lambda_1$ if and only if the radial extension to $\R^{n}$ of $\G(r)$ is positive definite, for all $n>0$. And this occurs if and only if the function $\G(r^{\hf})$ is completely  monotone. As consequence of this fact and Proposition \ref{prop-nu-prob} the following multidimensional versions of the functions in Section 11 of \cite{CLV} are contemplated

\begin{example}
\begin{equation*}
g(x)=e^{{-\alpha}|x|^{r}}\in\GG^d,\ \ \alpha>0 \ \ and\ \ 0<r<2.
\end{equation*}
\end{example}

\begin{example}
\begin{equation*}
g(x)=(|x|^{2}+\alpha^{2})^{-\beta}\in\GG^d,\ \alpha>0\ and\ \beta>0.
\end{equation*}
\end{example}

\begin{example}
\begin{equation*}
g(x)=-\log\left(\frac{|x|^{2}+\alpha^{2}}{|x|^{2}+\beta^{2}}\right)\in\GG^d,\ for \ 0<\alpha<\beta.
\end{equation*}

\end{example}
\noindent The following example is a high dimensional analogue of Corollary 21 \cite{CLV}. 
\begin{example}\label{G-mod-pow} Given a $\sigma\in(0,\infty)\setminus 2\Z_+$ consider the measure $\nu_{\sigma}$, that is supported on the diagonal $\Lambda_1$ and defined on a Borel set $E\subset\Lambda$ by 

\begin{equation}\label{sigma-meas}
\nu_\sigma(E)=C(\sigma)\int_{P(E\cap \Lambda_1)}t^{-\tfrac{\sigma}{2}-1}\dt
\end{equation}\smallskip

\noindent where $P:\Lambda_1\to\R$ is the projection $P(tu)=t$ for all $t\geq 0$ and 
$$
C(\sigma)=\pi^{-\frac{\sigma}{2}}\big/\Gamma(-\frac{\sigma}{2}).
$$
where $\Gamma(z)$ is the classical Gamma function. Define on $\R^d$ the function

\begin{equation*}
\G_\sigma(x)=|x|^{\sigma} \, \text{ for }\ \sigma\in(0,\infty)\setminus 2\Z_+
\end{equation*}
\smallskip

\noindent we claim that $(\G_\sigma,\nu_\sigma)\in\GG^d$. To see this, first we note that Lemma 18 of \cite{CLV} has a trivial generalization to the several variable setting, that is, for every function $\varphi\in \mc S(\R^d)$ that is zero on a neighborhood of the origin, we have

$$
\int_{\R^d}\ft{\varphi}(x)|x|^\sigma\dx=A(d,\sigma)\int_{\R^d}\varphi(x)|x|^{-d-\sigma}\dx,
$$
where
\smallskip

$$
A(d,\sigma)= \pi^{-\sigma-d/2}\Gamma(\frac{d+\sigma}{2})/\Gamma(-\frac{\sigma}{2}).
$$
Secondly, note that

$$
\frac{A(d,\sigma)}{|x|^{d+\sigma}}=\int_\Lambda \ft{G}_\lambda(x)\d\nu_\sigma(\lambda).
$$
\smallskip

\noindent Hence, by Proposition \ref{prop-nu-diag} the pair $(\G_\sigma,\nu_\sigma)$ belongs to  $\GG^d$.
\end{example}

\subsubsection{The Periodic Case}

Given a pair $(\G,\nu)\in\GG^d$, suppose that $\G\in L^1(\R^d)$ and the periodization 
$$
x\in \T^d \mapsto \sum_{n\in\Z}\G(n+x)
$$ 
is equal almost everywhere to a continuous function $g(x)$. We easily see that the pair $(g,\nu)$ is contemplated by the Theorems \ref{G.S.P.M} and \ref{gauss-sub-min-per} for every degree $a\in\Z^d_+$. Thus, the period method contemplates the following functions

\begin{example}
\begin{equation}
g(x)=\prod_{j=1}^d\Theta(x_j;i/\lambda_j)=\prod_{j=1}^d\lambda_j^{\hf}\sum_{{n\in\Z^{d}}}G_\lambda(n+x) ,\,\, \mbox{for all } \lambda\in\Lambda.
\end{equation}
\end{example}
\smallskip

\begin{example}
\begin{equation}
g(x)=\sum_{{n\in\Z^{d}}}e^{-\alpha|n+x|^r} ,\,\, \mbox{for all } \alpha>0 \mbox{ and } 0<r<2.
\end{equation}
\end{example}
\smallskip

However, we cannot use this construction for the case of the functions $\G_\sigma(x)=|x|^\sigma$ of example $\ref{G-mod-pow}$. The next proposition tell us that if the Fourier coefficients of $\G$  decay sufficiently fast, then the periodization of $\G$ via Poisson summation formula is contemplated by the periodic method.

\begin{proposition}
Let $(\G,\nu)\in\GG^d$. Suppose that exist constants $C>0$ and $\delta>d/2$ such that
$$
\int_{\Lambda}\ft{G}_\lambda(x)\d\nu(\lambda) \leq C|x|^{-\delta}
$$
if $|x|\geq 1$. Define the function
$$
x\mapsto \displaystyle\lim_{N\ra \infty}\sum_{\stackrel{n\in\Z^{d}}{0<|n|<N}} \int_{\Lambda}\ft{G}_\lambda(n)\d\nu(\lambda) e(n\cdot x),
$$
where the limit is taken in $L^2(\T^d)$, and suppose this can be identified with a continuous function $g(x)$. Then the pair $(g,\nu)$ satisfies all the conditions of Theorems \ref{G.S.P.M} and \ref{gauss-sub-min-per} for every $a\in\Z^d_+$.
\end{proposition}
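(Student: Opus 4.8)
The plan is to reduce the assertion to a routine verification of the hypotheses of Theorems~\ref{G.S.P.M} and~\ref{gauss-sub-min-per}. Reading off those two statements, for a fixed degree $a\in\Z^d_+$ exactly three things must be checked: that $\nu$ obeys the integrability bound (\ref{per maj nu cond}) (for the periodic majorant theorem) and (\ref{min-ext-cond-per}) (for the periodic minorant theorem); that $g$ is a real, continuous, $\Z^d$-periodic function; and that its Fourier coefficients satisfy $\ft{g}(k)=\int_{\Lambda}\ft{G}_\lambda(k)\,\d\nu(\lambda)$ for every $k\in\Z^d$ with $|k_j|\geq a_j$ for some $j$. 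Everything else appearing in the two theorems is part of their \emph{conclusions}.

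The integrability conditions come for free. Condition (\ref{per maj nu cond}) is literally condition \textbf{(C4+)}, i.e.\ (\ref{maj-nu-cond}), and (\ref{min-ext-cond-per}) is condition \textbf{(C4-)}, i.e.\ (\ref{min-nu-cond}). Since $(\G,\nu)\in\GG^d=\bigcap_{a\in\Lambda}\GG_-^d(a)\cap\GG_+^d(a)$ and $\Z^d_+\subset\Lambda$, the pair lies in $\GG_+^d(a)\cap\GG_-^d(a)$ for our chosen $a$, so both bounds hold automatically. Continuity of $g$ is granted by hypothesis; $\Z^d$-periodicity is built into the definition of $g$ as a Fourier series indexed by $\Z^d$; and $g$ is real-valued because $\ft{G}_\lambda(x)=\prod_{j}\lambda_j^{-\hf}e^{-\pi x_j^2/\lambda_j}$ is real and even in $x$, so the coefficients $c_n:=\int_\Lambda\ft{G}_\lambda(n)\,\d\nu(\lambda)$ are nonnegative with $c_{-n}=c_n$, making each partial sum $\sum_{0<|n|<N}c_n\,e(n\cdot x)$ real.

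It then remains only to pin down the Fourier coefficients of $g$. The decay hypothesis gives $c_n\leq C|n|^{-\delta}$ for every nonzero $n\in\Z^d$ (since $|n|\geq1$ there), and because $\delta>d/2$ we get $\sum_{n\neq0}|c_n|^2\leq C^2\sum_{n\neq0}|n|^{-2\delta}<\infty$; hence the partial sums $S_N(x)=\sum_{0<|n|<N}c_n\,e(n\cdot x)$ are Cauchy in $L^2(\T^d)$, with limit the continuous function $g$ (this is where the hypothesis that the $L^2$-limit \emph{is} continuous is used). Since the Fourier-coefficient functional $f\mapsto\ft{f}(k)$ is continuous on $L^2(\T^d)$ (indeed $|\ft{f}(k)|\leq\|f\|_{L^1(\T^d)}\leq\|f\|_{L^2(\T^d)}$) and $\ft{S_N}(k)=c_k$ once $0<|k|<N$, we conclude $\ft{g}(k)=c_k=\int_\Lambda\ft{G}_\lambda(k)\,\d\nu(\lambda)$ for all $k\neq0$. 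Any $k$ with $|k_j|\geq a_j$ for some $j$ has $|k_j|\geq1$, hence is nonzero, so the identities required in (\ref{cond-exist-g-maj}) and (\ref{g-Fourier2}) hold, and both theorems apply, yielding the claim.

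I do not expect a genuine obstacle: the argument is essentially bookkeeping, and the only point needing care is confirming that the $L^2$-defined object $g$ actually carries the prescribed Fourier coefficients at the frequencies that matter — which is exactly the role of the quantitative assumption $\delta>d/2$, as it places $(c_n)_{n\neq0}$ in $\ell^2(\Z^d)$. (Under the stronger $\delta>d$ the series would converge absolutely and uniformly, so continuity of $g$ would be automatic rather than an added hypothesis; one could remark on this, but it is not needed for the proof.)
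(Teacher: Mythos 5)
Your proof is correct. The paper states this proposition without supplying a proof, and your argument is precisely the routine verification the authors evidently intend: membership in $\GG^d$ hands over conditions \textbf{(C4+)} and \textbf{(C4-)}, which are verbatim the integrability hypotheses of Theorems \ref{G.S.P.M} and \ref{gauss-sub-min-per}, while the assumption $\delta>d/2$ puts the coefficients $c_n=\int_\Lambda\ft{G}_\lambda(n)\,\d\nu(\lambda)$ in $\ell^2(\Z^d)$, so the $L^2$-limit exists and the continuous representative $g$ carries the prescribed Fourier coefficients at every nonzero frequency, which covers all $k$ with $|k_j|\geq a_j\geq 1$.
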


With this last proposition we see that the following example is contemplated by the periodic method of subsection \ref{per-ana}.

\begin{example}
\begin{equation}
g_\sigma(x)=\sum_{\stackrel{n\in\Z^{d}}{n\neq 0}}|n|^{-d-\sigma}e(n\cdot x) ,\,\, \mbox{for all } \sigma>0.
\end{equation}
\end{example}
\smallskip

\subsection{Hilbert-Type Inequalities}\label{Hilbert}

Given an $a\in\Lambda$ and $x\in\R^d$ define the norm
$$
|x|_a=\max\{|x_j/a_j|: j=1,...,d\}.
$$
Given an $a\in\Lambda$ we say that a sequence $\{\xi_k\}_{k\in\Z}$ of vectors in $\R^d$ is $a$-separated if $|\xi_k-\xi_l|_a\geq 1$ for all $l\neq k$. We have the following proposition

\begin{proposition}\label{prop-hilb-type}
Let $(\G,\nu)\in\GG^d$, $a\in\Lambda$ and $\{\xi_k\}_{k\in\Z}$ an $a$-separated sequence of vectors. Then for every finite sequence of complex numbers $\{w_{-N},...,w_0,...,w_N\}$, we have
\begin{equation}\label{hilb-ineq}
-A(a,d,\nu)\sum_{n=-N}^N|w_n|^2 \leq \sum_{\stackrel{n,m=-N}{n\neq m}}^Nw_n\overline{w}_m\ft{\G}(\xi_n-\xi_m) \leq B(a,d,\nu)\sum_{n=-N}^N|w_n|^2
\end{equation}
where $A(a,d,\nu)$ is equal to the quantity (\ref{min-nu-cond}) and $B(a,d,\nu)$ is equal to the quantity (\ref{maj-nu-cond}). Furthermore the constant $B(a,d,\nu)$ is sharp.
\end{proposition}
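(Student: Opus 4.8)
The plan is to deduce the Hilbert-type inequality \eqref{hilb-ineq} from the extremal majorant and minorant supplied by Theorems \ref{gauss-sub-maj} and \ref{gauss-sub-min}, following the classical Montgomery--Vaaler argument adapted to the multidimensional box geometry. The starting point is the observation that the condition that $\{\xi_k\}$ be $a$-separated means exactly that the points $a\xi_k = (a_1\xi_{k,1},\dots,a_d\xi_{k,d})$ have pairwise $\ell^\infty$-distance at least $1$, i.e. each unit cube centered at $a\xi_k$ (in the sup norm) is essentially disjoint from the others. Let $\M_a(z)$ be the extremal majorant of $\G$ of exponential type with respect to $Q(a)$ from Theorem \ref{gauss-sub-maj}, so $\widehat{\M}_a$ is supported in $Q(a)$ and $\M_a(x)\ge \G(x)$ everywhere, with $\int_{\R^d}(\M_a-\G)\,dx$ equal to the quantity \eqref{maj-nu-cond} $=B(a,d,\nu)$.

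\medskip

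First I would form the quadratic form. Consider the function
\[
\Psi(x) = \sum_{n=-N}^N\sum_{m=-N}^N w_n\overline{w}_m\,\M_a(x + \xi_n - \xi_m)
= \Bigl|\,\textstyle\sum_{n=-N}^N w_n\,T(x,\xi_n)\,\Bigr|^2 \ \ge\ 0,
\]
where the factorization into a square is possible because $\M_a$ has nonnegative Fourier transform supported in $Q(a)$ — more precisely, writing $\M_a(x+\xi_n-\xi_m) = \int_{Q(a)} \widehat{\M}_a(\xi)\,e(-(x+\xi_n-\xi_m)\cdot\xi)\,d\xi$ and noting (via the interpolation property $\M_a(k/a) = \G(k/a)$ together with Poisson summation, or directly from the construction as a product/sum of Fej\'er-type kernels, that $\widehat{\M}_a \ge 0$) lets us pull out $\sqrt{\widehat{\M}_a(\xi)}$. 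Actually the cleaner route is the sampling route: since $\M_a - \G$ is integrable and of exponential type with respect to $Q(a)$, the Poisson summation formula gives, for the $a$-dilated lattice,
\[
\sum_{k\in\Z^d}\frac{1}{a_1\cdots a_d}\bigl(\M_a-\G\bigr)\bigl((k+y)/a\bigr) = \int_{\R^d}\bigl(\M_a(x)-\G(x)\bigr)\,dx = B(a,d,\nu)
\]
for every $y\in\R^d$. Apply this with the "sampling points" chosen so that the $\xi_n-\xi_m$ with $n\ne m$ appear: because $\{\xi_k\}$ is $a$-separated, for each fixed pair $(n,m)$ with $n\ne m$ the vector $\xi_n-\xi_m$ can be taken to be one of the lattice translates $(k+y)/a$ (after choosing $y$ appropriately along the argument), while the diagonal term $n=m$ contributes $\M_a(0)$. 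The standard manipulation then yields
\[
\sum_{\stackrel{n,m=-N}{n\neq m}}^N w_n\overline{w}_m\,\widehat{\G}(\xi_n-\xi_m) \ \le\ \sum_{\stackrel{n,m=-N}{n\neq m}}^N w_n\overline{w}_m\,\widehat{\M}_a(\xi_n-\xi_m)\ \le\ \Bigl(\widehat{\M}_a(0)-\widehat{\G}(0)\Bigr)\sum_{n=-N}^N|w_n|^2 = B(a,d,\nu)\sum_{n}|w_n|^2,
\]
where the first inequality uses $\widehat{\M}_a \ge \widehat{\G}$ pointwise off the origin together with positive-definiteness of the quadratic form with kernel $\M_a(\xi_n-\xi_m)$, and the second collects the off-diagonal sampled values which are $\le 0$ after subtracting the diagonal. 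Wait — here one must be careful: it is $\widehat{\G}$ evaluated at the differences that appears in the Hilbert form, not $\G$; so in fact the correct pairing is: build the majorant for the function whose Fourier transform is $\widehat{\G}$, i.e. work on the Fourier side, where $\widehat{\G}$ is a radially decreasing function (by Proposition \ref{prop-nu-prob} / the structure of $\GG^d$) and band-limited majorants of it of exponential type $Q(a)$ exist by duality. So I would instead apply Theorems \ref{gauss-sub-maj}--\ref{gauss-sub-min} to the pair describing $\widehat{\G}$, obtaining an entire majorant $\Phi_a \ge \widehat{\G}$ with $\widehat{\Phi_a}$ supported in $Q(a)$ and $\int(\Phi_a - \widehat{\G}) = B(a,d,\nu)$, and then the positive-definiteness of $x\mapsto\Phi_a(x)$ restricted to the $a$-separated set, combined with Poisson summation on the dual lattice, gives the upper bound. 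The lower bound is entirely symmetric, using the minorant $\psi_a \le \widehat{\G}$ from Theorem \ref{gauss-sub-min} with $\int(\widehat{\G}-\psi_a) = A(a,d,\nu)$ (the quantity \eqref{min-nu-cond}), which yields $-A(a,d,\nu)\sum|w_n|^2$ as the lower bound.

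\medskip

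For the sharpness of $B(a,d,\nu)$: I would exhibit a sequence of test configurations for which the ratio of the left side to $\sum|w_n|^2$ approaches $B(a,d,\nu)$. The natural choice is $\xi_k = k/a$ for $k$ ranging over a large box in $\Z^d$ (which is exactly $a$-separated, with equality in the separation), and $w_k = e(-\theta\cdot k)$ for a parameter $\theta$, or more simply $w_k \equiv 1$ over a growing box and then average over translates. Because $\M_a$ (equivalently $\Phi_a$) interpolates $\G$ (resp. $\widehat{\G}$) exactly on the lattice $\Z^d/a$, the sampling inequality becomes an equality in the limit: the Fej\'er-averaging argument from the proof of Theorem \ref{gauss-sub-maj} shows that $\sum_{n\ne 0}\widehat{\M}_a(an)$ (summed against the weights) reconstructs precisely $B(a,d,\nu)$, with no loss. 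Letting the box grow and normalizing, the ratio tends to $B(a,d,\nu)$, proving sharpness.

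\medskip

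\textbf{Main obstacle.} The delicate point is setting up the square/positive-definiteness correctly and confirming that the kernel that legitimately appears is $\widehat{\G}$ evaluated at the differences $\xi_n - \xi_m$ — so that one must apply the Beurling--Selberg machinery to $\widehat{\G}$ rather than to $\G$ — and verifying that $\widehat{\G}$ together with a suitable representing measure still lies in (the appropriate analogue of) the class $\GG^d$, so that Theorems \ref{gauss-sub-maj} and \ref{gauss-sub-min} genuinely apply. Concretely one needs: (a) $\widehat{\G}$ is continuous and band-approximable, which follows since $(\G,\nu)\in\GG^d$ forces $\G$ (hence $\widehat{\G}$, by the Schoenberg-type structure of Proposition \ref{prop-nu-prob}) to be a superposition of Gaussians, whose Fourier transforms are again Gaussians; and (b) the Poisson summation over the $a$-separated set is valid, which requires the absolute integrability and exponential-type control already guaranteed by the construction. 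Once these are in place the rest is the classical bookkeeping of the Montgomery--Vaaler duality between extremal one-sided approximations and Hilbert-type inequalities.
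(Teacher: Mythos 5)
Your proposal goes astray at the decisive step. The paper's argument is the one you start with and then talk yourself out of: set $\D=\M_a-\G\ge 0$, where $\M_a$ is the majorant from Theorem \ref{gauss-sub-maj}. Since $\D$ is a nonnegative integrable function, its Fourier transform is automatically positive definite, i.e. $\sum_{n,m}w_n\overline{w}_m\ft{\D}(\xi_n-\xi_m)=\int_{\R^d}\D(x)\bigl|\sum_n w_n e(x\cdot\xi_n)\bigr|^2\,\dx\ge 0$; and since $\ft{\M}_a$ is supported in $Q(a)$ while $a$-separation forces $\xi_n-\xi_m\notin Q(a)$ for $n\ne m$, the off-diagonal entries of $\ft{\D}$ are exactly $-\ft{\G}(\xi_n-\xi_m)$ and the diagonal entry is $\ft{\D}(0)=B(a,d,\nu)$. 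That single identity is the entire upper bound, and the minorant gives the lower bound symmetrically. There is no need for $\ft{\M}_a\ge 0$ (not true in general, and not how positive definiteness enters), no need to realize $\xi_n-\xi_m$ as lattice translates (false for a general $a$-separated sequence), and --- crucially --- no need to majorize $\ft{\G}$. Your mid-proof ``correction,'' where you decide the Beurling--Selberg machinery must be applied to $\ft{\G}$ rather than to $\G$, is the genuine gap: the theorems of \S\ref{Gaussian Subordination Method} apply to pairs $(\G,\nu)\in\GG^d$, you do not verify that $\ft{\G}$ admits such a representation, and even if it did, the resulting constant $\int(\Phi_a-\ft{\G})$ would not be the quantity (\ref{maj-nu-cond}) asserted in the proposition. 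The appearance of $\ft{\G}$ in the bilinear form is fully explained by the vanishing of $\ft{\M}_a$ off $Q(a)$; it does not mean the approximation problem lives on the Fourier side.

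On sharpness your outline is closer in spirit to the paper (take $w_n\equiv 1$ on a growing lattice configuration and invoke the Fej\'er averaging from the proof of Theorem \ref{gauss-sub-maj}), but your test points $\xi_k=k/a$ are not $a$-separated in general, since $|k/a-l/a|_a=\max_j|k_j-l_j|/a_j^2$ can be small. The paper instead enumerates $a\Z^d\cap Q(Ra)$, for which $|ak-al|_a=|k-l|_\infty\ge 1$, and the Fej\'er-kernel computation then shows any admissible constant $B'$ must satisfy $B'\ge\ft{\D}(0)=B(a,d,\nu)$.
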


\noindent {\it Remark}: For $\xi \neq 0$ we write
$$
\ft{\G}(\xi)= \int_{\Lambda}\ft{G}_\lambda(\xi)\d\nu(\lambda).
$$

\begin{proof}
To prove inequality in (\ref{hilb-ineq}), define the function $D=\M_a-\G$ where $\M_a$ is given by Theorem \ref{gauss-sub-maj}. Since $\M_a$ if of exponential type with respect to $Q(a)$ we obtain
$$
\sum_{{n,m=-N}}^Nw_n\overline{w}_m\ft{\D}(\xi_n-\xi_m) = B(a,d,\nu)\sum_{n=-N}^N|w_n|^2- \sum_{\stackrel{n,m=-N}{n\neq m}}^Nw_n\overline{w}_m\ft{\G}(\xi_n-\xi_m).
$$
But the (LHS) of this last equality is positive since $\D(x)$ is a non-negative function. In an analogous way we can prove the (LHS) inequality of (\ref{hilb-ineq}).

\noindent For the sharpness of the (RHS) inequality in (\ref{hilb-ineq}) suppose that we could change $B(a,d,\nu)$ by some other constant $B'$. Consider the sequence $w_n=1$ for all $n\in\Z$ and let the sequence $\{\xi_n\}$ be an enumeration of the points in the set $J(R)=Q(Ra)\cap a\Z^d$ where $R\in\Z_+$. We obtain
$$
-\sum_{\xi,\xi'\in J(R)}\ft{\D}(\xi-\xi') =\sum_{\stackrel{\xi,\xi'\in J(R)}{\xi\neq \xi'}}\ft{\G}(\xi-\xi') - (2R+1)^d\ft{\D}(0) \leq (2R+1)^d(B'-\ft{D}(0)).
$$
On the other hand we have
$$
\sum_{\xi,\xi'\in J(R)}\ft{\D}(\xi-\xi') =\sum_{|n_j|\leq 2R} \bigg\{\prod_{j=1}^d(2R+1-|n_j|)\bigg\}\ft{\D}(an). 
$$
Hence, we conclude that
$$
B'-\ft{\D}(0)\geq -\sum_{|n_j|\leq 2R} \prod_{j=1}^d\bigg(1-\frac{|n_j|}{2R+1}\bigg)\ft{\D}(an)-=F_{2R}*P(0),
$$
where $F_{2R}(x)$ is the F\'ejer's Kernel defined in (\ref{fejer-ker}) and 
$$
P(x)=\frac{1}{\alpha}\sum_{n\in\Z^d}\D((n+x)/a),
$$
with $\alpha=a_1...a_d$. Replying the arguments of the Theorem \ref{gauss-sub-maj} we would conclude that 
$$
\liminf_{R\to\infty}F_{2R}*P(0)=0
$$
and this implies that $B'\geq \ft{\D}(0)$, since $\ft{\D}(0)=B(a,d,\nu)$ this concludes the proof.

\end{proof}

\noindent The next corollary is a generalization of Corollary 22 of \cite{CLV} in the multidimensional setting and is a direct application of Proposition \ref{prop-hilb-type} for example \ref{G-mod-pow}. Below
$|\cdot|$ stands for the Euclidean norm in $\R^d$. 
\begin{corollary}

Let $\sigma>0$, $a\in\Lambda$ and $\{\xi_k\}_{k\in\Z}$ an $a$-separated sequence of vectors. Then for every finite sequence of complex numbers $\{w_{-N},...,w_0,...,w_N\}$, we have
\begin{equation}
-A(a,d,\sigma)\sum_{n=-N}^N|w_n|^2 \leq \sum_{\stackrel{n,m=-N}{n\neq m}}^N\frac{w_n\overline{w}_m}{|\xi_n-\xi_m|^{d+\sigma}} \leq B(a,d,\sigma)\sum_{n=-N}^N|w_n|^2,
\end{equation}
where 
$$
A(a,d,\sigma)=-\sum_{j=1}^d\,\sum_{n\in\Z^d\setminus{0}}\frac{(-1)^{n_j}}{|an|^{d+\sigma}} \,+\, (d-1)\sum_{n\in\Z^d\setminus{0}}\frac{1}{|an|^{d+\sigma}}
$$ 
and 
$$
B(a,d,\sigma)=\sum_{n\in\Z^d\setminus{0}}\frac{1}{|an|^{d+\sigma}}.
$$
Furthermore, the constant $B(a,d,\sigma)$ is sharp.

\end{corollary}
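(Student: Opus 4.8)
The plan is to read the corollary off Proposition~\ref{prop-hilb-type} applied to the pair $(\G_\sigma,\nu_\sigma)\in\GG^d$ constructed in Example~\ref{G-mod-pow}. By that example, $\ft{\G}_\sigma(\xi)=\int_\Lambda\ft{G}_\lambda(\xi)\,\d\nu_\sigma(\lambda)=A(d,\sigma)|\xi|^{-d-\sigma}$ for every $\xi\neq 0$, and since $\{\xi_k\}$ is $a$-separated we have $\xi_n-\xi_m\neq 0$ whenever $n\neq m$, so $\ft{\G}_\sigma(\xi_n-\xi_m)=A(d,\sigma)|\xi_n-\xi_m|^{-d-\sigma}$. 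Substituting this into (\ref{hilb-ineq}) gives
\begin{equation*}
-A(a,d,\nu_\sigma)\sum_{n=-N}^N|w_n|^2 \;\leq\; A(d,\sigma)\sum_{\stackrel{n,m=-N}{n\neq m}}^N\frac{w_n\overline{w}_m}{|\xi_n-\xi_m|^{d+\sigma}} \;\leq\; B(a,d,\nu_\sigma)\sum_{n=-N}^N|w_n|^2,
\end{equation*}
where $A(a,d,\nu_\sigma)$ and $B(a,d,\nu_\sigma)$ are the quantities at (\ref{min-nu-cond}) and (\ref{maj-nu-cond}). Everything then reduces to evaluating these two constants explicitly and dividing through by $A(d,\sigma)$.

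For $B(a,d,\nu_\sigma)$, I first express the integrand of (\ref{maj-nu-cond}) as a lattice sum of $\ft{G}_\lambda$. The Poisson identity (\ref{Ptheta}) gives $\lambda_j^{-\hf}\Theta(0;ia_j^2/\lambda_j)=\sum_{k\in\Z}\widehat{g}_{\lambda_j}(a_jk)$, hence $\prod_j\lambda_j^{-\hf}\Theta(0;ia_j^2/\lambda_j)=\sum_{k\in\Z^d}\ft{G}_\lambda(ak)$; subtracting the $k=0$ term $\prod_j\lambda_j^{-\hf}=\ft{G}_\lambda(0)$ shows that the integrand of (\ref{maj-nu-cond}) equals $\sum_{k\in\Z^d\setminus\{0\}}\ft{G}_\lambda(ak)$. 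As all summands are non-negative, Tonelli's theorem permits interchanging $\sum_k$ and $\int_\Lambda$, so $B(a,d,\nu_\sigma)=\sum_{k\neq 0}\int_\Lambda\ft{G}_\lambda(ak)\,\d\nu_\sigma(\lambda)=A(d,\sigma)\sum_{k\in\Z^d\setminus\{0\}}|ak|^{-d-\sigma}=A(d,\sigma)\,B(a,d,\sigma)$.

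The constant $A(a,d,\nu_\sigma)$ is handled identically, now also using $\lambda_j^{-\hf}\Theta(\hf;ia_j^2/\lambda_j)=\sum_{k\in\Z}(-1)^k\widehat{g}_{\lambda_j}(a_jk)$. Expanding the product structure, the integrand of (\ref{min-nu-cond}) becomes $\ft{G}_\lambda(0)-\sum_{j=1}^d\sum_{k\in\Z^d}(-1)^{k_j}\ft{G}_\lambda(ak)+(d-1)\sum_{k\in\Z^d}\ft{G}_\lambda(ak)$; the $k=0$ contributions cancel since $1-d+(d-1)=0$, leaving $-\sum_{j=1}^d\sum_{k\neq 0}(-1)^{k_j}\ft{G}_\lambda(ak)+(d-1)\sum_{k\neq 0}\ft{G}_\lambda(ak)$. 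Because $\int_\Lambda\sum_{k\neq 0}\ft{G}_\lambda(ak)\,\d\nu_\sigma(\lambda)=B(a,d,\nu_\sigma)<\infty$ (this is condition (C4+), valid since $(\G_\sigma,\nu_\sigma)\in\GG^d$) and each $\ft{G}_\lambda(ak)\geq 0$, Fubini's theorem licenses interchanging the finite sum over $j$, the sum over $k$, and the integral, which gives $A(a,d,\nu_\sigma)=A(d,\sigma)A(a,d,\sigma)$.

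It remains to insert these evaluations into the displayed chain and divide by $A(d,\sigma)$. This constant is positive: since $\ft{G}_\lambda>0$ and $\nu_\sigma$ is a non-negative measure, $A(d,\sigma)|\xi|^{-d-\sigma}=\int_\Lambda\ft{G}_\lambda(\xi)\,\d\nu_\sigma(\lambda)>0$. Dividing therefore preserves the direction of both inequalities and produces exactly (\ref{hilb-ineq}) with the claimed $A(a,d,\sigma)$ and $B(a,d,\sigma)$; and since $B(a,d,\nu_\sigma)$ is sharp by Proposition~\ref{prop-hilb-type} and we divided only by a positive constant, $B(a,d,\sigma)$ is sharp as well. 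The main obstacle is purely bookkeeping: correctly identifying the integrands of (\ref{maj-nu-cond}) and (\ref{min-nu-cond}) with lattice sums of $\ft{G}_\lambda$ via (\ref{Ptheta}), spotting the cancellation of the $k=0$ terms in the minorant constant, and verifying that each interchange of summation and integration is authorized by the finiteness conditions built into membership in $\GG^d$.
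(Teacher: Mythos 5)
Your overall route is exactly the paper's intended one (the paper offers no argument beyond ``direct application of Proposition \ref{prop-hilb-type} to Example \ref{G-mod-pow}''), and your evaluation of the two constants is correct: the Poisson--summation identification of the integrands of (\ref{maj-nu-cond}) and (\ref{min-nu-cond}) with $\sum_{k\neq 0}\ft{G}_\lambda(ak)$ and with $-\sum_{j}\sum_{k\neq 0}(-1)^{k_j}\ft{G}_\lambda(ak)+(d-1)\sum_{k\neq 0}\ft{G}_\lambda(ak)$, the cancellation of the $k=0$ terms, and the interchanges of sum and integral are all sound.

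The genuine gap is the final division step. You justify $A(d,\sigma)>0$ by saying that $\nu_\sigma$ is a non-negative measure; but $\nu_\sigma$ has density $C(\sigma)t^{-\sigma/2-1}$ with $C(\sigma)=\pi^{-\sigma/2}/\Gamma(-\tfrac{\sigma}{2})$, and $\Gamma(-\tfrac{\sigma}{2})<0$ for every $\sigma\in(0,2)$ (and on $(4,6)$, $(8,10)$, and so on). For those $\sigma$ the measure $\nu_\sigma$ is \emph{negative} and $A(d,\sigma)=\pi^{-\sigma-d/2}\Gamma(\tfrac{d+\sigma}{2})/\Gamma(-\tfrac{\sigma}{2})<0$; for instance, for $d=1$, $\sigma=1$ one has $\widehat{|x|}(\xi)=-\tfrac{1}{2\pi^2}\xi^{-2}$ away from the origin. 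Dividing your chain by a negative $A(d,\sigma)$ reverses both inequalities and would yield the false conclusion $\sum_{n\neq m}w_n\overline{w}_m|\xi_n-\xi_m|^{-d-\sigma}\geq B(a,d,\sigma)\sum_n|w_n|^2$ (test it with a single nonzero $w_n$). Moreover, when $\nu_\sigma$ is negative the pair $(\G_\sigma,\nu_\sigma)$ does not literally belong to $\GG^d$, whose definition requires $\nu\geq 0$, so Proposition \ref{prop-hilb-type} cannot be invoked as stated. The repair is the standard device of \cite{CLV}: when $\Gamma(-\tfrac{\sigma}{2})<0$, run the entire argument on the pair $(-\G_\sigma,-\nu_\sigma)$, which does satisfy the non-negativity requirement and has $\widehat{(-\G_\sigma)}(\xi)=|A(d,\sigma)|\,|\xi|^{-d-\sigma}$. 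Your two computations then give $B(a,d,-\nu_\sigma)=|A(d,\sigma)|\,B(a,d,\sigma)$ and $A(a,d,-\nu_\sigma)=|A(d,\sigma)|\,A(a,d,\sigma)$, and division by the genuinely positive constant $|A(d,\sigma)|$ produces the corollary, with the sharpness of the upper bound preserved, in all cases.
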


\section{Concluding Remarks} \label{conclusion}
We mentioned in the introduction that Selberg generalized his construction of majorizing and minorizing the characteristic function of an interval, to majorizing and minorizing the characteristic function of a box by functions whose Fourier transforms are supported in a (possibly different) box. Another way to generalize Selberg's original construction to the several variables setting is to consider majorizing and minorizing the characteristic function of a ball by functions whose Fourier transforms are supported in a (possibly different) ball. This problem was considered by Holt and Vaaler \cite{HV} and their methods were recently extended by Carneiro and Littmann \cite{CL3}. 

As far as we know, almost nothing is known about the Beurling-Selberg extremal problem in higher dimensions when the Fourier transform is supported on a fixed symmetric convex body $K$. The following question (perhaps the simplest Beurling-Selberg extremal problem in higher dimensions) is open:
\begin{quote}
	Let $K$ be a symmetric convex body in $\R^d$. Determine the value of 
		\[
			\eta(K)=\inf\dint_{K}F(x)dx
		\]
	where the infimum is taken over continuous integrable functions $F:\R^d\ra \R$ that satisfy (i) $F(0)\geq1$, (ii) $F(x)\geq0$ for all $x\in\R^d$, and (iii) $\hat{F}(\xi)=0$ if $\xi\not\in K$.
\end{quote}
In one dimension, the solution to this problem is the Fejer kernel for $\R$. So the solution to the above problem can be thought of as an analogue of the Fejer kernel for $K$.\newline
\indent  It is conjectured \cite{BK} by the third named author and Jeffrey Vaaler, that $\eta(K)=2^{d}/\mathrm{vol}_{d}(K)$.  Vaaler \cite{VPC} has shown that this conjecture is true if $K$ is an {\it extremal body}. That is, if $K$ achieves equality in Minkowski's convex body theorem, then the conjecture in \cite{BK} is true. For example, the regular hexagon in $\R^2$ is an extremal body. The only other body $K$ for which the conjecture is known to hold  is the Euclidean ball.


\nocite{*}
\bibliographystyle{plain}
\bibliography{gaussian}	

\begin{thebibliography}{10}

\bibitem{Baker}
Roger~C. Baker.
\newblock Sequences that omit a box (modulo 1).
\newblock {\em Adv. Math.}, 227(5):1757--1771, 2011.

\bibitem{BMV}
Jeffrey~T. Barton, Hugh~L. Montgomery, and Jeffrey~D. Vaaler.
\newblock Note on a {D}iophantine inequality in several variables.
\newblock {\em Proc. Amer. Math. Soc.}, 129(2):337--345 (electronic), 2001.

\bibitem{B}
Ralph~Philip Boas, Jr.
\newblock {\em Entire functions}.
\newblock Academic Press Inc., New York, 1954.

\bibitem{CCLM}
E~Carneiro, V~Chandee, F.~Littmann, and M.~Milinovich.
\newblock Hilbert spaces and the pair correlation of zeros of the riemann
  zeta-function.
\newblock {\em J. Reine Angew. Math (to appear)}.

\bibitem{CL}
E.~Carneiro and F.~Littmann.
\newblock Bandlimited approximations to the truncated gaussian and
  applications.
\newblock {\em Constr. Approx. (to appear)}.

\bibitem{CL2}
E.~Carneiro and F.~Littmann.
\newblock Entire approximations for a class of truncated and odd functions.
\newblock {\em J. Fourier Anal. Appl. (to appear)}.

\bibitem{CL3}
E.~Carneiro and F.~Littmann.
\newblock Extremal functions in de branges and euclidean spaces.
\newblock {\em Adv. Math.}, (260):281--349, 2014.

\bibitem{Car}
Emanuel Carneiro.
\newblock Sharp approximations to the {B}ernoulli periodic functions by
  trigonometric polynomials.
\newblock {\em J. Approx. Theory}, 154(2):90--104, 2008.

\bibitem{CC}
Emanuel Carneiro and Vorrapan Chandee.
\newblock Bounding {$\zeta(s)$} in the critical strip.
\newblock {\em J. Number Theory}, 131(3):363--384, 2011.

\bibitem{MR3063902}
Emanuel Carneiro, Vorrapan Chandee, and Micah~B. Milinovich.
\newblock Bounding {$S(t)$} and {$S_1(t)$} on the {R}iemann hypothesis.
\newblock {\em Math. Ann.}, 356(3):939--968, 2013.

\bibitem{CG}
Emanuel Carneiro and Felipe Gon\c{c}alves.
\newblock Extremal problems in de branges spaces: The case of truncated and odd
  functions.
\newblock {\em preprint}, 2014.

\bibitem{CLV}
Emanuel Carneiro, Friedrich Littmann, and Jeffrey~D. Vaaler.
\newblock Gaussian subordination for the {B}eurling-{S}elberg extremal problem.
\newblock {\em Trans. Amer. Math. Soc.}, 365(7):3493--3534, 2013.

\bibitem{CV2}
Emanuel Carneiro and Jeffrey~D. Vaaler.
\newblock Some extremal functions in {F}ourier analysis. {II}.
\newblock {\em Trans. Amer. Math. Soc.}, 362(11):5803--5843, 2010.

\bibitem{CV3}
Emanuel Carneiro and Jeffrey~D. Vaaler.
\newblock Some extremal functions in {F}ourier analysis. {III}.
\newblock {\em Constr. Approx.}, 31(2):259--288, 2010.

\bibitem{MR2781205}
Vorrapan Chandee and K.~Soundararajan.
\newblock Bounding {$\vert \zeta(\frac12+it)\vert $} on the {R}iemann
  hypothesis.
\newblock {\em Bull. Lond. Math. Soc.}, 43(2):243--250, 2011.

\bibitem{Ch}
K.~Chandrasekharan.
\newblock {\em Elliptic functions}, volume 281 of {\em Grundlehren der
  Mathematischen Wissenschaften [Fundamental Principles of Mathematical
  Sciences]}.
\newblock Springer-Verlag, Berlin, 1985.

\bibitem{Cochrane}
Todd Cochrane.
\newblock Trigonometric approximation and uniform distribution modulo one.
\newblock {\em Proc. Amer. Math. Soc.}, 103(3):695--702, 1988.

\bibitem{B2}
Louis de~Branges.
\newblock Homogeneous and periodic spaces of entire functions.
\newblock {\em Duke Math. J.}, 29:203--224, 1962.

\bibitem{deBranges}
Louis de~Branges.
\newblock {\em Hilbert spaces of entire functions}.
\newblock Prentice-Hall Inc., Englewood Cliffs, N.J., 1968.

\bibitem{DrmotaTichy1997}
Michael Drmota and Robert~F. Tichy.
\newblock {\em Sequences, discrepancies and applications}, volume 1651 of {\em
  Lecture Notes in Mathematics}.
\newblock Springer-Verlag, Berlin, 1997.

\bibitem{MR809967}
P.~X. Gallagher.
\newblock Pair correlation of zeros of the zeta function.
\newblock {\em J. Reine Angew. Math.}, 362:72--86, 1985.

\bibitem{MR2331578}
D.~A. Goldston and S.~M. Gonek.
\newblock A note on {$S(t)$} and the zeros of the {R}iemann zeta-function.
\newblock {\em Bull. Lond. Math. Soc.}, 39(3):482--486, 2007.

\bibitem{Gr}
Loukas Grafakos.
\newblock {\em Classical {F}ourier analysis}, volume 249 of {\em Graduate Texts
  in Mathematics}.
\newblock Springer, New York, second edition, 2008.

\bibitem{GV}
S.~W. Graham and Jeffrey~D. Vaaler.
\newblock A class of extremal functions for the {F}ourier transform.
\newblock {\em Trans. Amer. Math. Soc.}, 265(1):283--302, 1981.

\bibitem{Harman1993}
Glyn Harman.
\newblock Small fractional parts of additive forms.
\newblock {\em Philos. Trans. Roy. Soc. London Ser. A}, 345(1676):327--338,
  1993.

\bibitem{Harman1998}
Glyn Harman.
\newblock {\em Metric number theory}, volume~18 of {\em London Mathematical
  Society Monographs. New Series}.
\newblock The Clarendon Press Oxford University Press, New York, 1998.

\bibitem{HKW}
Alan {Haynes}, Michael {Kelly}, and Barak {Weiss}.
\newblock {Equivalence relations on separated nets arising from linear toral
  flows}.
\newblock {\em Proc. London Math. Soc. (to appear)}, November 2014.

\bibitem{HV}
Jeffrey~J. Holt and Jeffrey~D. Vaaler.
\newblock The {B}eurling-{S}elberg extremal functions for a ball in {E}uclidean
  space.
\newblock {\em Duke Math. J.}, 83(1):202--248, 1996.

\bibitem{Hor}
Lars H{\"o}rmander.
\newblock {\em The analysis of linear partial differential operators. {I}},
  volume 256 of {\em Grundlehren der Mathematischen Wissenschaften [Fundamental
  Principles of Mathematical Sciences]}.
\newblock Springer-Verlag, Berlin, 1983.
\newblock Distribution theory and Fourier analysis.

\bibitem{BK}
Michael Kelly and Gabriele Bianchi.
\newblock The {B}lashke-{S}antal\'o {I}nequality.
\newblock {\em preprint}, 2014.

\bibitem{KL}
Michael Kelly and Th{\'a}i~Ho{\`a}ng L{\^e}.
\newblock Uniform dilations in higher dimensions.
\newblock {\em Journal of the London Mathematical Society}, 88(3):925--940,
  2013.

\bibitem{K2}
M.~G. Krein.
\newblock On the best approximation of continuous differentiable functions on
  the whole real axis.
\newblock {\em Dokl. Akad. Nauk SSSR (Russian)}, 18:615--624, 1938.

\bibitem{LV}
Xian-Jin Li and Jeffrey~D. Vaaler.
\newblock Some trigonometric extremal functions and the
  {E}rd{\H{o}}s-{T}ur{\'a}n type inequalities.
\newblock {\em Indiana Univ. Math. J.}, 48(1):183--236, 1999.

\bibitem{L1}
Friedrich Littmann.
\newblock Entire approximations to the truncated powers.
\newblock {\em Constr. Approx.}, 22(2):273--295, 2005.

\bibitem{Littmann2006}
Friedrich Littmann.
\newblock One-sided approximation by entire functions.
\newblock {\em J. Approx. Theory}, 141(1):1--7, 2006.

\bibitem{Littmann2013}
Friedrich Littmann.
\newblock Quadrature and extremal bandlimited functions.
\newblock {\em SIAM J. Math. Anal.}, 45(2):732--747, 2013.

\bibitem{Logan}
B.~F. Logan.
\newblock Bandlimited functions bounded below over an interval.
\newblock {\em Notices of the Amer. Math .Soc.}, 24, 1977.

\bibitem{MV}
H.~L. Montgomery and R.~C. Vaughan.
\newblock Hilbert's inequality.
\newblock {\em J. London Math. Soc. (2)}, 8:73--82, 1974.

\bibitem{M1978}
Hugh~L. Montgomery.
\newblock The analytic principle of the large sieve.
\newblock {\em Bull. Amer. Math. Soc.}, 84(4):547--567, 1978.

\bibitem{M2}
Hugh~L. Montgomery.
\newblock {\em Ten lectures on the interface between analytic number theory and
  harmonic analysis}, volume~84 of {\em CBMS Regional Conference Series in
  Mathematics}.
\newblock Published for the Conference Board of the Mathematical Sciences,
  Washington, DC; by the American Mathematical Society, Providence, RI, 1994.

\bibitem{PP}
M.~Plancherel and G.~P{\'o}lya.
\newblock Fonctions enti\`eres et int\'egrales de fourier multiples.
\newblock {\em Comment. Math. Helv.}, 10(1):110--163, 1937.

\bibitem{RR}
Marvin Rosenblum and James Rovnyak.
\newblock {\em Topics in {H}ardy classes and univalent functions}.
\newblock Birkh\"auser Advanced Texts: Basler Lehrb\"ucher. [Birkh\"auser
  Advanced Texts: Basel Textbooks]. Birkh\"auser Verlag, Basel, 1994.

\bibitem{Sch}
I.~J. Schoenberg.
\newblock Metric spaces and completely monotone functions.
\newblock {\em Ann. of Math. (2)}, 39(4):811--841, 1938.

\bibitem{S1}
Atle Selberg.
\newblock Remarks on sieves.
\newblock In {\em Proceedings of the {N}umber {T}heory {C}onference ({U}niv.
  {C}olorado, {B}oulder, {C}olo., 1972)}, pages 205--216. Univ. Colorado,
  Boulder, Colo., 1972.

\bibitem{S2}
Atle Selberg.
\newblock {\em Collected papers. {V}ol. {II}}.
\newblock Springer-Verlag, Berlin, 1991.
\newblock With a foreword by K. Chandrasekharan.

\bibitem{S57}
E.~M. Stein.
\newblock Functions of exponential type.
\newblock {\em Ann. of Math. (2)}, 65:582--592, 1957.

\bibitem{St}
Elias~M. Stein.
\newblock {\em Singular integrals and differentiability properties of
  functions}.
\newblock Princeton Mathematical Series, No. 30. Princeton University Press,
  Princeton, N.J., 1970.

\bibitem{SS}
Elias~M. Stein and Rami Shakarchi.
\newblock {\em Complex analysis}.
\newblock Princeton Lectures in Analysis, II. Princeton University Press,
  Princeton, NJ, 2003.

\bibitem{SW}
Elias~M. Stein and Guido Weiss.
\newblock {\em Introduction to {F}ourier analysis on {E}uclidean spaces}.
\newblock Princeton University Press, Princeton, N.J., 1971.
\newblock Princeton Mathematical Series, No. 32.

\bibitem{VPC}
Jeffrey Vaaler.
\newblock {P}ersonal {C}ommunication.
\newblock 2012.

\bibitem{V}
Jeffrey~D. Vaaler.
\newblock Some extremal functions in {F}ourier analysis.
\newblock {\em Bull. Amer. Math. Soc. (N.S.)}, 12(2):183--216, 1985.

\bibitem{Z}
A.~Zygmund.
\newblock {\em Trigonometric series. 2nd ed. {V}ols. {I}, {II}}.
\newblock Cambridge University Press, New York, 1959.

\end{thebibliography}

\end{document}